\documentclass[reqno]{amsart}

\usepackage{amsfonts}
\usepackage{amssymb}
\usepackage{amsmath}

\usepackage{enumitem}
\usepackage{xcolor}
\usepackage{todonotes}

\setenumerate{label={\rm (\alph{*})}}
\usepackage{bbm,bm,euscript,mathrsfs}
\usepackage{paper_diening}
\usepackage{graphicx}
\usepackage[top=1in, bottom=1.25in, left=1.10in, right=1.10in]{geometry}
\usepackage{relsize}
\usepackage{hyperref} 
\usepackage{cases, color,amsthm} 
\hypersetup{linkcolor=blue, colorlinks=true,citecolor = red}

\allowdisplaybreaks

\numberwithin{equation}{section}

\usepackage{tikz-cd}
\usetikzlibrary{lindenmayersystems}
\usetikzlibrary{decorations.pathreplacing}

\DeclareMathOperator{\Div}{div}

\newcommand{\dd}{\mathrm d}
\newcommand{\dt}{\,\mathrm{d} t}

\newcommand{\bx}{\mathbf{x}}
\newcommand{\by}{\mathbf{y}}
\newcommand{\bz}{\mathbf{z}}
\newcommand{\bq}{\mathbf{q}}
\newcommand{\bu}{\mathbf{u}}

\newcommand{\bv}{\mathbf{v}}
\newcommand{\bn}{\mathbf{n}}

\newcommand{\dy}{\, \mathrm{d}\mathbf{y}}

\newcommand{\dx}{\, \mathrm{d} \mathbf{x}}

\newcommand{\divx}{\mathrm{div} }

\newcommand{\nabx}{\nabla }
\newcommand{\naby}{\nabla_{\mathbf{y}}}

\newcommand{\Delx}{\Delta }
\newcommand{\Dely}{\Delta_{\mathbf{y}}}

\newcommand{\dH}{\,\mathrm{d}\mathbf{\mathcal{H}}^2}

\newcommand{\ds}{\,\mathrm{d}s}

\newcommand{\Oeta}{\Omega_\eta}

\newtheorem{theorem}{Theorem}[section]
\newtheorem{lemma}[theorem]{Lemma}
\newtheorem{proposition}[theorem]{Proposition}

\newtheorem{remark}[theorem]{Remark}

\theoremstyle{definition}
\newtheorem{definition}[theorem]{Definition}

\begin{document}

\title[Local  Well-Posedness  for compressible Fluid-viscoelastic Shell Interactions]{ Local  Well-Posedness for Barotropic Compressible Fluid-viscoelastic Shell Interactions}

\author{Pierre Marie Ngougoue Ngougoue}
\address{Faculty of Mathematics, University of Duisburg-Essen, Thea-Leymann-Straße 9, 45127 Essen, Germany}
\email{pierre.ngougouengougoue@uni-due.de}

\subjclass[2020]{35B65, 35Q74, 35R37, 76N10, 74F10, 74K25}

\date{\today}

\keywords{Compressible Navier-Stokes system, Viscoelastic shell equation, Fluid-Structure interaction, Strong solutions.}

\begin{abstract}
We study a three-dimensional barotropic compressible Navier--Stokes flow interacting with a viscoelastic  shell that occupies a portion of the fluid boundary. The analysis is entirely  Eulerian and the moving interface is parametrised by a localised Hanzawa transform supported near the shell patch, which preserves the transport structure of the continuity equation and avoids a global Lagrangian map. We prove local-in-time existence and uniqueness of strong solutions for compatible data and without imposing a vanishing initial shell displacement. The proof combines a well-posedness theory for the continuity equation, solved by the method of characteristics in the Hanzawa frame, with an analysis of the momentum--structure subproblem carried out by the classical linearisation--energy estimate--fixed-point scheme on a fixed reference domain. A Banach fixed point then couples the two steps and closes the argument on  a short time interval. We work with a viscosity-weighted energy that makes the scaling in the shear and bulk viscosities explicit. This yields bounds whose constants grow at most linearly in these parameters. In particular, the estimates are inviscid-limit compatible. The result complements the global finite-energy weak theory for compressible fluid--shell interaction by providing a local strong well-posedness statement in the same geometric configuration. It also extends strong boundary results beyond beams and plates to bending shells, and addresses the wave-to-bending direction suggested in the literature on compressible fluid--structure interaction at structural boundaries, while remaining fully Eulerian.
  
\end{abstract}

\maketitle

\section{Introduction}

Fluid--structure interaction (FSI) underpins aeroelastic panels, deformable microfluidic devices, arterial flows, and acoustic membranes, where a viscous fluid exchanges momentum with a deformable thin structure through a moving interface. In the incompressible regime the landscape is comparatively mature and naturally splits according to geometry. For an elastic solid completely immersed in a viscous fluid, \cite{coutand2005motion} established local existence and uniqueness of strong solutions in Lagrangian formulation.  Their analysis explicitly addresses the parabolic (fluid) -- hyperbolic (solid) coupling by first solving the linear system (unique weak solution) and then upgrading regularity and closing a fixed point for the nonlinear strong problem.  For structures coupled to the fluid along a portion of the boundary, two benchmark  results address both long-time control under damping and broader geometric settings. \cite{grandmont2016existence} proved global-in-time strong solutions for a two-dimensional viscous fluid interacting with a linear viscoelastic one-dimensional  beam, and in particular obtained a no-contact result:  the elastic wall does hit the opposite boundary in finite time . Complementing this,  \cite{grandmont2019existence} established a unified local well-posedness theory for incompressible viscous fluid interacting with either a  beam or a rod, proving local-in-time strong existence and uniqueness for sufficiently regular initial data via sharp  maximal $L^p$--$L^q$ regularity and a contraction scheme. 
At the level of weak solutions, \cite{Chambolle2005existence} proved existence of at least one weak solution for a viscous incompressible fluid coupled to an elastic plate as long as the plate does not touch the fixed part of the boundary.  For thin shells sitting on the boundary, \cite{lengeler2014weak, muha2022existence} established global-in-time weak solutions for an incompressible Newtonian fluid interacting with a Koiter shell. The existence is up to geometric breakdown such as self-intersection, thereby extending the plate framework to bending shells on a moving boundary.  Around these weak theories, subsequent works developed the framework further and improved regularity results. For instance \cite{breit2021incompressible}  obtained  weak solutions for an \mbox{incompressible}    polymeric (Navier--Stokes--Fokker--Planck) fluid interacting with a Koiter shell, with existence until either the shell approaches self-intersection or the energy degenerates. More recently \cite{breit2023ladyzhenskaya} proved a Ladyzhenskaya--Prodi--Serrin conditional criterion that ensures regularity  and uniqueness for incompressible fluid--shell systems by combining local strong solvability, an acceleration estimate and weak--strong uniqueness, which together pinpoint regimes where solutions are unique and strong.

The step from incompressible to compressible FSI is not merely incremental; two structural features change and drive new difficulties. First, the density $\rho$ becomes an unknown governed by a continuity equation, so one must ensure positivity and Lipschitz-type dependence of $\rho$ on the fluid velocity and on the interface motion (after geometric transformation), while losing the divergence-free cancellations that simplify nonlinear estimates in the incompressible case. Second, the pressure law couples nonlinearly with the velocity;  in weak-solution settings, this is often controlled by the renormalised continuity equation  and the effective viscous-flux structure (see e.g., \cite{DiPerna1989}, \cite{lions1996mathematical},\cite{feireisl2004dynamics} ).  With these obstacles in view, the compressible literature has coalesced -- again split by geometry -- into a thinner but coherent body of results. For immersed solids, \cite{kukavica2012well} proved strong well-posedness for a compressible viscous fluid coupled across a free interface with a linear elastic (Lamé) solid, establishing short-time control with continuity of velocity and stress across the interface. For boundary-mounted thin structures, \cite{breit2018compressible} obtained global finite-energy  weak solutions for an isentropic fluid interacting with a linear Koiter shell that forms part of the boundary in three dimensions for adiabatic exponent $\gamma > \frac{12}{ 7}$ -- with existence up to geometric degeneration of the shell --  which now serves as a baseline weak theory in the compressible boundary class.   At higher regularity there are local strong results for plates and beams, notably  \cite{mitra2020local} compressible channel flow coupled to a damped Euler--Bernoulli beam. Here, solutions are constructed by first decoupling the parabolic (fluid), continuity, and beam (structure) equations and then closing a fixed-point argument, typically under zero initial displacement of the beam. Likewise, \cite{maity2021existence} studied a compressible Navier--Stokes--Fourier fluid with a structurally damped plate and, via a fixed-domain transformation, established  $L^{p}-L^{q}$ maximal regularity,  leading to local strong solutions and -- under  small data -- global ones.  A closely related boundary model replaces bending by a longitudinal wave. In that setting \cite{maityroy2021existence} proved local strong solutions on a fixed reference domain via a modified Lagrangian--geometric mapping, explicitly noting two advantages of the pullback: (i) one can work in the reference configuration and, (ii) the convection term in the density equation disappears. They further explicitly remarked that replacing the wave-structure by the plate would be natural yet nontrivial within their approach and may require stronger hypotheses on the initial data. The literature on compressible FSI at structural boundaries  has broadened further through multicomponent models such as the bi-fluid Koiter-shell system of  \cite{kalousek2024existence}  (existence of weak solutions assuming  different adiabatic exponents and dissipativity of the structure -- up to degeneracy of the energy or self-intersection of the structure) and stability mechanisms like \cite{trifunovic2023compressible} weak--strong uniqueness for compressible plate interaction via a relative-entropy framework.  

Against this backdrop, the present paper develops a compressible boundary-mounted bending model in a way that is both structurally and methodologically distinct.  We consider a three-dimensional isentropic viscous compressible fluid coupled with a viscoelastic shell supported on a portion of the boundary, and we establish local-in-time existence and uniqueness of strong solutions in a purely Eulerian framework. The moving interface is straightened only in a neighbourhood of the boundary by a localised Hanzawa transform, which leaves the interior mostly unaltered and keeps the geometry tractable.  Our main result is precisely stated in Theorem~\ref{theo:mainresult} and the proof proceeds through a two-steps construction glued by a fixed-point argument. First, assuming we are given a prescribed fluid velocity and a prescribed shell displacement, we solve the continuity equation in the straightened Hanzawa frame by the method of characteristics. This yields existence, uniqueness and sharp stability estimates for the density in the space used later  for the contraction argument (cf.~ Section \ref{sec:continuitysubprob}). Second,  in Section \ref{sec:MomStrucSubProb},  assuming a known density, we prove the local-in-time well-posedness of the momentum--structure subproblem via the classical linearisation, maximal regularity and fixed-point scheme.  Finally, a Banach contraction then couples the transport  and the momentum--structure steps and yields the full solution on a small time interval (cf. ~ Section \ref{sec:localstrongfixedpoint}). 

Relative to existing compressible boundary results,  the methodological choice to remain Eulerian yields concrete benefits and clarifies the scope of the model. In contrast with the Lagrangian framework used in  \cite{maity2021existence}  for a structurally damped plate and in  \cite{maityroy2021existence} for a boundary wave, we do not construct a global $C^{1}$-in-time diffeomorphism of the moving boundary nor rely on the cancellation of the convective term after pullback. Remaining Eulerian preserves the natural transport structure; the geometric effects are absorbed into the effective transport field, so the continuity analysis proceeds by characteristics without explicit commutator estimates. The method also addresses the `wave $\to$ plate' direction highlighted in the latter work, while staying in the compressible isentropic setting. Compared with the  beam model \cite{mitra2020local} in a two-dimensional channel, which is built around longitudinal bending and typically initialises the structure at rest, we treat a viscoealastic shell in three dimensions without assuming zero initial displacement and without restricting ourselves to a small neighbourhood of the trivial configuration. Our work also complements the finite-energy weak theory for compressible shells of \cite{breit2018compressible} by providing a local strong result in a closely related geometry. Moreover, we retain throughout, the explicit dependence on the  shear and bulk viscosities $\mu$ and $\lambda$ in all a priori estimates,  which makes the dissipation dependence transparent and prepares the framework for an inviscid-limit discussion.  The modular two-step construction -- method of characteristics for the continuity subproblem and classical linearisation - maximal regularity - fixed-point argument  for the momentum--structure system -- also makes the analysis adaptable to nearby thin-structure laws and mixed boundary configurations.


\subsection{The fluid-structure interaction problem}
We consider the interaction of the compressible fluid and the flexible shell where the shell is located on a part of the boundary of the reference domain $\Omega\subset \mathbb{R}^{3}$. As the shell deforms, the reference fluid domain $\Omega$ changes to $\Omega_\eta$.
Accordingly, the shell function $\eta:(t, \by)\in I \times \omega \mapsto   \eta(t,\by)\in \mathbb{R}$ with $I=(0,T)$ for some $T>0$ solves\footnote{We use nondimensional variables and set the shell’s surface mass density, structural damping coefficient, and bending stiffness to 1, so no explicit constants appear in  \eqref{eq:ShellEq}.}
\begin{equation}\label{eq:ShellEq}
\left\{\begin{aligned}
& \partial_t^2\eta - \partial_t\Dely \eta + \Dely^2\eta=-\bn^\intercal(\bm{\tau}\bn_\eta)\circ\bm{\varphi}_\eta
\mathrm{det}(\naby \bm{\varphi}_{\eta}) 
&\text{ for all }  (t,\by)\in I\times\omega
,\\
&\eta(0,\by)=\eta_0(\by), \quad (\partial_t\eta)(0, \by)=\eta_*(\by)
&\text{ for all } \by\in\omega,
\end{aligned}\right.
\end{equation}
with periodic boundary conditions in space. Here, $\omega \subset \mathbb{R}^2$ is such that there is $\bfvarphi_\eta :\omega\to \partial \Omega_\eta$ that parametrizes the boundary of the deformed domain $\Omega_\eta$. We denote the fluid velocity and density  by 
\[\mathbf{v}:(t, \mathbf{x})\in I \times \Oeta \mapsto  \mathbf{v}(t, \mathbf{x}) \in \mathbb{R}^3  \quad \text{and} \quad  \rho:(t, \mathbf{x})\in I \times \Oeta \mapsto  \rho(t, \mathbf{x}) \in \mathbb{R}.  \]
The vectors $\bn$ and $\bn_\eta$ are the normal vectors  of the reference boundary and of the deformed boundary, respectively, whereas $\bm{\tau}$ denotes the Cauchy stress of the fluid given by {\em Newton's rheological law}, that is
$$\bm{\tau} =\mathbb{S}(\nabla\bv)-p(\rho)\mathbb I_{3\times 3}$$
with the viscous stress tensor
$$\mathbb{S}(\nabla\bv)=2\mu\left(\frac{1}{2}\big(\nabla\bv+(\nabla\bv)^\intercal\big)-\frac{1}{3}\Div \bv\mathbb{I}_{3\times 3}\right)+\left(\lambda+\frac{2}{3}\mu\right)\Div\bv\mathbb{I}_{3\times 3}. $$
The viscosity coefficients $\mu$ and $\lambda$ satisfy
$$\mu>0, \quad \lambda+\frac{2}{3}\mu\geq0. $$ 

The motion of the fluid in $\Omega_\eta$ is governed by the compressible Navier--Stokes equations:
\begin{equation}\label{eq:ContMomentEq}
\left\{\begin{aligned}
&\partial_t \rho  + \divx(\rho\mathbf{v} \big)
= 
0 &\text{ for all }(t,\bx)\in I\times\Omega_\eta,\\
&\partial_t(\rho \bv)+\Div(\rho\bv\otimes\bv)
= 
\mu\Delx \bv +(\lambda+\mu)\nabx\Div \bv-\nabx p(\rho) &\text{ for all }(t,\bx)\in I\times\Omega_\eta,\\
&\rho(0,\bx)=\rho_0(\bx), \qquad
(\rho\bv)(0,\bx)=\bq_0(\bx) &\text{ for all } \bx\in \Omega_{\eta_0},
\end{aligned}\right.
\end{equation}
where the pressure $p(\rho)$ is described by the standard isentropic state equation
$$p(\rho)=a\rho^\gamma \quad a>0,~ \gamma>1, $$
and $\gamma$ is called the adiabatic exponent.
The equations \eqref{eq:ShellEq} and \eqref{eq:ContMomentEq} are coupled through the kinematic boundary condition
\begin{align}
\label{interfaceCond}
\bv\circ \bfvarphi_\eta= \big(\partial_t\eta\big)\bn \quad\text{ for all } (t,\by)\in I\times \omega.
\end{align} 
Our main result is the following:

\begin{theorem}\label{theo:mainresult}
Assume the initial data $(\rho_0,\bv_0,\eta_0,\eta_*)$ satisfy 
\begin{align}
&\rho_0 \in W^{3,2}(\Omega_{\eta_0}),\qquad \bv_0 \in W^{3,2}(\Omega_{\eta_0}), \qquad\eta_0\in W^{5,2}(\omega), \label{eq:InitialCondSpace}
\\&
\eta_*\in W^{3,2}(\omega),\qquad\Vert\eta_0\Vert_{L^\infty(\omega)}<L, \qquad\bv_0\circ \bm{\varphi}_{\eta_0}=\eta_*\bn \text{ on } \omega, \label{eq:InitialCondInterface}
\end{align}
Then there  exists   $T_* \in I $ such that  \eqref{eq:ShellEq}--\eqref{interfaceCond}  admits a unique strong solution  $(\rho,\bv,\eta)$ on $I_* := (0, T_*]$ satisfying 
\begin{align*}
&\rho\in W^{1,2}\big(I_*;W^{3,2}(\Omega_\eta)\big)\cap W^{1,\infty}\big(I_*;W^{2,2}(\Omega_\eta)\big),
\\
&\bv\in L^{2}\big(I_*;W^{4,2}(\Omega_\eta)\big)
\cap W^{2,2}\big(I_*;L^{2}(\Omega_\eta)\big),
\\
&\eta \in L^2\big(I_*;W^{6,2}(\omega)\big)  
\cap W^{3,2}\big(I_*;L^{2}(\omega)\big)
  \cap W^{2,\infty}\big(I_*;W^{1,2}(\omega)\big),
\\
&\partial_t\eta \in L^\infty\big(I_*;W^{3,2}(\omega)\big) \cap L^{2}\big(I_*;W^{4,2}(\omega)\big).
\end{align*}
\end{theorem}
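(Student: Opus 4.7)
The plan is to execute the two-step decoupling sketched in the introduction and close it with a Banach contraction. To make the moving geometry accessible to fixed-reference estimates, I first fix a localised Hanzawa map $\Psi_\eta:\Omega\to\Omega_\eta$, supported in a tubular neighbourhood of the shell patch $\bm{\varphi}(\omega)$ and equal to the identity in the interior bulk. The resulting diffeomorphism inherits the regularity of $\eta$, and pullback by $\Psi_\eta$ sends each target space in Theorem~\ref{theo:mainresult} to an analogous space on the fixed reference domain $\Omega$. I then fix a small time horizon $T_\ast$ and a closed ball $\mathcal{B}_{T_\ast}$ of admissible pairs $(\bv,\eta)$ in the regularity class of Theorem~\ref{theo:mainresult}, agreeing with the prescribed data at $t=0$ and satisfying the kinematic constraint \eqref{interfaceCond}. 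The strong solution will be produced as a fixed point of the map $\Phi:\mathcal{B}_{T_\ast}\to\mathcal{B}_{T_\ast}$, $(\bv,\eta)\mapsto\rho\mapsto(\bv',\eta')$, built in two stages.

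First, given $(\bv,\eta)\in\mathcal{B}_{T_\ast}$, I solve the continuity equation $\partial_t\rho+\divx(\rho\bv)=0$ with initial datum $\rho_0$ in the Hanzawa frame. After pullback, $\widetilde{\rho}:=\rho\circ\Psi_\eta$ satisfies a transport equation whose effective velocity combines $\bv$ with the boundary motion and, thanks to \eqref{interfaceCond}, is tangential to the fixed reference boundary; this transport field inherits the regularity of $(\bv,\eta)$. Solving by the method of characteristics directly yields positivity of $\widetilde{\rho}$, propagation of the essential bounds of $\rho_0$, and, by differentiating the flow map, the full regularity $\widetilde{\rho}\in W^{1,2}(I_\ast;W^{3,2}(\Omega))\cap W^{1,\infty}(I_\ast;W^{2,2}(\Omega))$. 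Applying the same construction to two inputs produces a Lipschitz estimate of the form $\|\widetilde{\rho}_1-\widetilde{\rho}_2\|\lesssim T_\ast^{\theta}\|(\bv_1,\eta_1)-(\bv_2,\eta_2)\|$ with a positive power of $T_\ast$, which is what will drive the contraction.

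Second, with $\rho$ now prescribed and positive, I solve the momentum equation coupled to the shell equation on $\Omega_\eta$ for a new iterate $(\bv',\eta')$ via the classical linearisation, maximal regularity, fixed-point scheme. The convective and pressure nonlinearities enter as source terms controlled by the regularity of $\rho$ from Step~1; the structural damping $-\partial_t\Dely\eta'$ supplies the parabolic regularisation needed to compensate the hyperbolic character of the shell equation and to match the fluid regularity at the interface; the kinematic compatibility is preserved under the Hanzawa pullback. A linearisation with $\rho$ and the leading transport terms frozen, maximal $L^2$ regularity for the resulting Stokes-type operator coupled to a damped fourth-order plate-type equation, and a short-time contraction produce $(\bv',\eta')$ precisely in the class of Theorem~\ref{theo:mainresult}. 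Throughout this step I would carry the viscosity-weighted energy of the abstract so that the constants remain at most linear in $\mu$ and $\lambda$.

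Finally, $\Phi$ is shown to be a contraction on $\mathcal{B}_{T_\ast}$ for $T_\ast$ small enough, by combining the Lipschitz estimate from Step~1 with the Lipschitz dependence of the momentum--structure subproblem on its coefficients and data. The main obstacle I expect is this concluding contraction: the density enters the momentum equation both as a coefficient in front of $\partial_t\bv'$ and $(\bv\cdot\nabla)\bv'$ and through the nonlinear pressure gradient $\nabla p(\rho)=a\gamma\rho^{\gamma-1}\nabla\rho$, so the difference estimate must absorb the high-regularity norms of the coefficient while still gaining a positive power of $T_\ast$, and must do so uniformly in $(\bv,\eta)\in\mathcal{B}_{T_\ast}$, i.e.\ with constants depending on the shell only through norms controlled by the ball and through the bound $\|\eta\|_{L^\infty(\omega)}<L$. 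Once this uniformity is secured, smallness of $T_\ast$ turns $\Phi$ into a strict contraction and yields the unique strong solution asserted in Theorem~\ref{theo:mainresult}.
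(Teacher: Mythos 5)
Your overall architecture is the paper's: solve the continuity equation by characteristics in the Hanzawa frame, solve the momentum--structure subproblem by linearisation plus maximal regularity plus a fixed point, and glue the two by a Banach contraction on a short interval. (The paper iterates on the density, $\mathbf{F}=\mathbf{F}_1\circ\mathbf{F}_2:\overline\rho\mapsto(\overline\bv,\eta)\mapsto\overline\rho^{\#}$, whereas you iterate on $(\bv,\eta)$; this is only a reordering of the same composition and is immaterial.)

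There is, however, one genuine gap in your concluding step. You claim a Lipschitz estimate for the transport step of the form $\|\widetilde\rho_1-\widetilde\rho_2\|\lesssim T_*^{\theta}\|(\bv_1,\eta_1)-(\bv_2,\eta_2)\|$ and propose to close the contraction on a ball in the full regularity class of the theorem, i.e.\ with the density difference measured in $L^\infty(I_*;W^{3,2})\cap W^{1,\infty}(I_*;W^{2,2})$. This cannot work: the difference $\widetilde\rho_1-\widetilde\rho_2$ solves a transport equation whose source contains $(\overline{\bu}_{eff,1}-\overline{\bu}_{eff,2})\cdot\nabla\widetilde\rho_2$ and $(\delta_{\Div,1}-\delta_{\Div,2})\,\widetilde\rho_2$, so estimating it at the top order $W^{3,2}$ would require $\nabla\widetilde\rho_2\in W^{3,2}$, one derivative more than the individual solutions possess. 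This is the classical derivative loss for differences of transport solutions, and the paper addresses it explicitly: the invariance of the ball is proved in the strong space $\mathcal{X}_{\overline\rho}$, but the contraction itself is carried out in the strictly weaker space $\mathcal{Y}_{\overline\rho}=\mathbb{W}^{0,2}_{\infty,2}\cap\mathbb{W}^{1,1}_{\infty,2}$, with the velocity/displacement differences likewise measured in weaker norms $\mathcal{Y}_{\overline\bv}\times\mathcal{Y}_\eta$; uniqueness and the claimed regularity of the fixed point are then recovered from the uniform strong bounds on the invariant ball. Without this two-norm (weak-contraction/strong-bound) device your final contraction estimate fails at top order, and the obstacle you flag instead (the density as a coefficient of $\partial_t\bv'$ and in $\nabla p(\rho)$) is the more benign of the two difficulties. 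A secondary omission in the same spirit: for the linearised momentum--structure problem the paper must impose a compatibility condition \eqref{eq:CC} linking $\bv_0$, $\eta_0$, $\eta_*$ and the initial source terms, and must restrict the admissible sources to a set $\mathcal{K}_{T,R}$ on which this condition holds; merely requiring your iterates to ``agree with the data at $t=0$'' does not by itself guarantee the time-differentiated estimates (testing with $\partial_t^2\widehat\eta$, $\partial_t\widehat\bv$) that produce the top-order bounds.
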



\subsection{Geometric Framework} 
We adopt the framework of   \cite[Section 2.4]{breit2024regularity}, and  assume that the \phantom{refer} reference domain $\Omega \subset \mathbb{R}^3 $ is open, bounded, and has a smooth, orientable boundary $\partial\Omega$, so that a consistent outward unit normal vector $\bn$ is well-defined and smooth.  To formalise the structure of the boundary $\partial\Omega$, we assume that it can be  parametrised by an injective map  $\bm{\varphi} \in C^{k}(\omega; \mathbb{R}^3) $  for some sufficiently large $k \in \mathbb{N}$.  For each $ \by = (y_1, y_2) \in \omega $, the pair of tangent vectors \( \partial_1 \bm{\varphi}(\by) \) and \( \partial_2\bm{\varphi}(\by) \) is assumed to be linearly independent. This ensures that $\bm{\varphi}$ defines a regular surface embedded in $\mathbb{R}^3 $, and the corresponding unit normal vector can be explicitly determined from $\bm{\varphi} $ by 
\[\bn(\by) = \dfrac{\partial_1\bm{\varphi}(\by) \times \partial_2 \bm{\varphi}(\by)}{| \partial_1 \bm{\varphi}(\by) \times \partial_2 \bm{\varphi}(\by) |}.  \]
Importantly, as a compact connected $C^k-$hypersurface in $\mathbb{R}^3$, $\partial\Omega$ satisfies the uniform interior and exterior ball condition (cf. \cite[Section 4.1]{PruessSimonett2013}), that is,  $\exists\, L > 0 $ such that for each point $\by \in \partial\Omega$, there are balls  $B(\bx, L) \subset \Omega $ and $B(\mathbf{z}, L) \subset \overline{\Omega}^c$ such that 
\[\partial\Omega \cap \overline{B}(\bx, L) \cap \overline{B}(\mathbf{z}, L) = \{\by\}. \]
Hence, as in \cite[Section 3.1]{PruessSimonett2016} we conclude that $\partial\Omega $ admits a tubular neighbourhood of radius $L >0 $, defined as 
\begin{equation}\label{eq:TubNeighbor}
\mathcal{N}_L := \left\{ \bx \in \mathbb{R}^3  \colon \mathrm{d}(\bx, \partial\Omega) < L \right\}.
\end{equation}
Within $\mathcal{N}_L $, every point $\bx $ can be uniquely projected onto the boundary $ \partial \Omega $  via the closest point projection. More precisely, for each $ \bx \in \mathcal{N}_L $, we define the map \[\by(\bx) = \arg\min\limits_{\by \in \omega} |\bx - \bm{\varphi}(\by)|,  \] so that the closest point on the boundary is given by $ \mathbf{p}(\bx) := \bm{\varphi}(\by(\bx)) $, and the signed distance to the boundary is defined as $s(\bx) := (\bx - \by(\bx))\cdot \bn(\by(\bx))$.

\noindent For $L > 0$ small enough, the projection $\mathbf{p}(\bx) $ is well-defined, and we have 
\[
|s(\bx)| = |\bx - \by(\bx)| = \min\limits_{\by \in \omega} |\bx - \bm{\varphi}(\by)| =  \mathrm{d}(\bx, \partial\Omega) \qquad \text{for all } \; \bx \in \mathcal{N}_L.
\]
Indeed, for large $L >0$, the normal vectors from different points on $\partial\Omega $ might intersect, leading to ambiguity in projection.  Consequently, $\mathcal{N}_L $ provides a natural coordinate system for points $\bx \in \mathcal{N}_L$. Specifically, each point $\bx \in \mathcal{N}_L $ can be uniquely written as 
\[\bx = \mathbf{p}(\bx) + s(\bx)\bn(\by(\bx)),  \] 
which implies -- together with \eqref{eq:TubNeighbor} -- the representation 
\begin{equation}\label{eq:TubNeighborFinal}
\mathcal{N}_L := \big\{ \by + s\bn(\by) \colon (s, \by) \in (-L, L)\times \omega \big\}.
\end{equation}

Furthermore, recall that the shell displacement $\eta$ defines a deformation of the boundary in the normal direction, leading to the time-dependent boundary 
\[\partial\Omega_{\eta (t)} = \big\{ \bm{\varphi}(\by) + \eta(t, \by)\bn(\by) \colon \by \in \omega\big\},  \]
with associated deformation map $ \bm{\varphi}_\eta (t, \by) = \bm{\varphi}(\by) + \eta(t, \by)\bn(\by). $ \\
However, to ensure that $\eta$ defines a valid normal parametrisation and that the deformed boundary $\partial\Omega_{\eta (t)}$ remains well-defined and regular over time, we required the following conditions:\\
\begin{itemize}

    \item[$\bullet$] The deformed boundary must remain within the tubular neighbourhood, that is, $$ \partial\Omega_{\eta (t)}  \subset \mathcal{N}_L \qquad \text{for all} \; t \in I. $$
    For this purpose, the displacement $\eta $ must satisfy $$\sup\limits_I \Vert \eta \Vert_{L^{\infty}(\omega)} < L .$$
    However, this condition is not enough as the deformed boundary $ \partial\Omega_{\eta } $ -- even for small Hausdorff distance -- can still fold or self intersect if the normals at different points intersect. To prevent this, we need to control the derivatives of $\eta$ to ensure the mapping $\by \longmapsto \bm{\varphi}_{\eta}(t, \by) $ remains \phantom{injec} injective. More precisely: \\
    
    \item[$\bullet$] The tangential vectors of the deformed boundary must remain non-degenerate, that is, 
    \begin{equation}\label{eq:Condl}
    \partial_1\bm{\varphi}_{\eta} \times \partial_2 \bm{\varphi}_{\eta} (t, \by) \neq 0, \quad  \text{for all} \; (t, \by) \in I \times \omega . 
    \end{equation}
    Moreover, to avoid boundary flipping, that is, to ensure the orientation of the deformed boundary $\partial\Omega_\eta$ matches the reference boundary $\partial\Omega$, we require that:
    \begin{equation}\label{eq:Condr} 
    \bn(\by)\cdot \bn_{\eta(t)}(\by) > 0 \quad \text{for all }\;  (t, \by) \in I \times \omega . 
    \end{equation}
    
\end{itemize}    
Hence, up to decreasing $L > 0$, one easily deduces that \eqref{eq:Condl} and \eqref{eq:Condr} hold provided that 
\[
\sup_{t \in I} \| \eta(t, \cdot) \|_{W^{1,\infty}(\omega)} < L.     
\] 

Within the preceding geometric setup, the boundary displacement $\eta$  admits a smooth normal extension into the interior of $\Omega$  via the Hanzawa transform 
\begin{equation}\label{eq:HanzawaT}
\bfPsi_\eta(t, \bx) :=
\begin{cases}
\mathbf{p}(\bx) + \Big( s(\bx) + \eta\big(\by(\bx)\big)\phi\big(s(\bx) \big)  \Big)\bn\big(\by(\bx)\big) & \text{if } \bx \in \mathcal{N}_L, \\
\bx & \text{elsewhere},
\end{cases}
\end{equation}
where \( \phi \in C^\infty(\mathbb{R}) \) is a cut-off function satisfying \( \phi \equiv 1 \) near  the boundary, that is,   \( s \approx 0 \), and far from the boundary, that is, $s \leq -L$,  $\phi \equiv 0$, so the transformation reduces to the identity map. In other words, the deformation $\eta$ is smoothly `turned off' as we move deeper into the interior of $\Omega$.

\noindent The following proposition gives some estimates for $\bfPsi_\eta$ proved in  \cite[Section 2.4]{breit2024regularity}. 

\begin{proposition}\label{prop:estimatePsiEta}
Let \( k \in \mathbb{N} \), \( p \in [1, \infty] \), and assume that \( \eta,\, \zeta \in W^{k,p}(\omega) \). Then  \( \bfPsi_\eta \) satisfies 
\begin{align}
\Vert  \bfPsi_\eta \Vert_{W^{k,p}(\Omega)} &\lesssim 1 + \Vert \eta \Vert_{W^{k,p}(\omega)},  \label{eq:HanzEstim1}
\\
\Vert \bfPsi_\eta - \bfPsi_\zeta \Vert_{W^{k,p}(\Omega)} &\lesssim \Vert \eta - \zeta \Vert_{W^{k,p}(\omega)}, \label{eq:HanzEstim2}
\end{align}
where the hidden constants depend on the curvature of \( \partial \Omega \), and the radius \( L > 0 \).\\

\noindent Furthermore, provided that   \;$ \sup\limits_{t \in I} \Vert \eta(t, \cdot) \Vert_{W^{1,\infty}(\omega)} < L $,  the inverse map \( \bfPsi_\eta^{-1} \) exists and satisfies
\begin{align}
\Vert  \bfPsi_{\eta}^{-1} \Vert_{W^{k,p}(\Omega)} &\lesssim 1 + \Vert \eta \Vert_{W^{k,p}(\omega)}, \label{eq:HanzEstim3}
\\
\Vert \bfPsi_{\eta}^{-1}\circ\bfPsi_{\eta} - \bfPsi_{\zeta}^{-1}\circ\bfPsi_{\zeta} \Vert_{W^{k,p}(\Omega)} &\lesssim \Vert \eta - \zeta \Vert_{W^{k,p}(\omega)}. \label{eq:HanzEstim4}
\end{align}
\end{proposition}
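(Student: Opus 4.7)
My plan is to split $\Omega$ into the bulk $\Omega \setminus \mathcal{N}_L$, where $\bfPsi_\eta$ reduces to the identity and therefore contributes only a geometric constant to every norm, and the tubular neighbourhood $\mathcal{N}_L$, where I will work in the normal coordinates $(s,\by)$. In these coordinates the map takes the explicit form $\bfPsi_\eta(\by + s\bn(\by)) = \by + (s + \eta(\by)\phi(s))\bn(\by)$, so that its deviation from the identity is the composition $E[\eta](\bx)\,\bn(\by(\bx))$ with the extension operator $E[\eta](\bx) := \eta(\by(\bx))\,\phi(s(\bx))$. The $C^k$ regularity of $\partial\Omega$ together with the uniform interior and exterior ball condition guarantees that the closest-point projection $\by(\cdot)$ and the signed distance $s(\cdot)$ are $C^k$-smooth on $\mathcal{N}_L$ with norms controlled only by the curvature of $\partial\Omega$ and by $L$.

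For \eqref{eq:HanzEstim1} and \eqref{eq:HanzEstim2} I would expand $\nabla^j \bfPsi_\eta$ on $\mathcal{N}_L$ by the chain (Fa\`a di Bruno) and Leibniz rules applied to the product $\eta\circ\by\cdot\phi\circ s\cdot\bn\circ\by$. Each resulting term carries at most one factor of the form $\nabla^\ell\eta(\by(\bx))$, while the remaining factors are smooth functions of the geometry and of the cut-off $\phi$. A change of variables from $\mathcal{N}_L$ to $\omega \times (-L,L)$ then reduces the $L^p$ norm of each such factor to $\|\nabla^\ell \eta\|_{L^p(\omega)}$, yielding \eqref{eq:HanzEstim1}. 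Since $\bfPsi_\eta - \bfPsi_\zeta = E[\eta-\zeta]\,\bn\circ\by$ depends linearly on $\eta$, the Lipschitz bound \eqref{eq:HanzEstim2} follows verbatim from the same computation with $\eta$ replaced by $\eta-\zeta$.

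For \eqref{eq:HanzEstim3} the key observation is that in the normal frame $\bfPsi_\eta$ leaves the tangential coordinate $\by$ invariant and acts only on the normal coordinate through the scalar map $\Phi_\by(s) := s + \eta(\by)\phi(s)$. Under the smallness condition $\sup_t\|\eta\|_{W^{1,\infty}(\omega)} < L$, and with $L$ shrunk so that $\|\eta\phi'\|_\infty < 1$, one has $\Phi_\by'(s) \geq c_0 > 0$, which allows the implicit function theorem (fibrewise in $\by$) to produce a $C^k$ inverse $G(\tilde s,\by) = \Phi_\by^{-1}(\tilde s)$. Differentiating the identity $G(\Phi_\by(s),\by) = s$ recursively expresses the mixed derivatives of $G$ as polynomials in the derivatives of $\eta$ and $\phi$ divided by a uniformly positive power of $\Phi_\by'$; an induction on $k$, combined with the composition and product bounds of the previous paragraph applied to $\bfPsi_\eta^{-1}(\bx) = \by(\bx) + G(s(\bx),\by(\bx))\,\bn(\by(\bx))$, yields \eqref{eq:HanzEstim3}.

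For the last Lipschitz estimate I would rely on the standard telescoping identity obtained from $\bfPsi_\eta\circ\bfPsi_\eta^{-1} = \mathrm{id} = \bfPsi_\zeta\circ\bfPsi_\zeta^{-1}$: applying $\bfPsi_\zeta^{-1}$ to the first equality and subtracting yields $\bfPsi_\eta^{-1}(\bx) - \bfPsi_\zeta^{-1}(\bx) = \bfPsi_\zeta^{-1}\bigl(\bx + (\bfPsi_\zeta-\bfPsi_\eta)(\bfPsi_\eta^{-1}(\bx))\bigr) - \bfPsi_\zeta^{-1}(\bx)$, which after a mean-value argument reduces to the norm of $\bfPsi_\eta - \bfPsi_\zeta$ multiplied by uniformly bounded derivatives of $\bfPsi_\zeta^{-1}$. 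Combining this representation with \eqref{eq:HanzEstim3} for $\bfPsi_\zeta^{-1}$ and \eqref{eq:HanzEstim2} for $\bfPsi_\eta - \bfPsi_\zeta$ produces \eqref{eq:HanzEstim4}. The main technical difficulty throughout is keeping all implicit constants depending \emph{only} on the curvature of $\partial\Omega$ and on $L$, uniformly in $\eta$; this is precisely what forces the pointwise smallness condition $\sup_t\|\eta\|_{W^{1,\infty}(\omega)} < L$, which in turn secures the lower bound on $\Phi_\by'$ needed to invert the normal coordinate and to propagate the inductive estimates on $G$.
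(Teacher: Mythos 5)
The paper does not actually prove this proposition: it is quoted from \cite[Section 2.4]{breit2024regularity} and the text explicitly defers to that reference, so your proposal is the only proof on the table. It follows the standard route one finds in the cited work: outside $\mathcal{N}_L$ the map is the identity; inside, in the fibre coordinates $(s,\by)$ it is the normal shear $(s,\by)\mapsto(s+\eta(\by)\phi(s),\by)$, so \eqref{eq:HanzEstim1}--\eqref{eq:HanzEstim2} follow from the affine-in-$\eta$ structure plus Leibniz/Fa\`a di Bruno and a bounded change of variables, and the inverse is obtained by inverting each fibre map $\Phi_{\by}$. This is correct in outline, but two points need more care than you give them. First, the lower bound $\Phi_{\by}'=1+\eta\phi'\ge c_0>0$ is not obtained by ``shrinking $L$'': a cut-off that decays from $1$ to $0$ across a layer of width comparable to $L$ has $\Vert\phi'\Vert_\infty\gtrsim L^{-1}$, so the product $\Vert\eta\Vert_\infty\Vert\phi'\Vert_\infty$ does not become small as $L$ decreases; what one actually uses is a fixed $\phi$ with $\Vert\phi'\Vert_\infty\le c L^{-1}$ together with a strict smallness of $\Vert\eta\Vert_\infty$ relative to $L/c$ (the same fudge the paper makes when it writes $\Vert\eta\Vert_{L^\infty}<L$). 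Second, your induction expresses $\nabla^k(\bfPsi_\eta^{-1})$ as polynomials in derivatives of $\eta$ of orders $1,\dots,k$ divided by powers of $\Phi_{\by}'$; this is genuinely nonlinear in $\eta$, so the claimed bound $\lesssim 1+\Vert\eta\Vert_{W^{k,p}}$, \emph{linear} in the top norm, does not follow directly from the induction — you need Gagliardo--Nirenberg/Moser-type product estimates to absorb the intermediate derivatives using the a priori $W^{1,\infty}$ control, or you must accept constants depending on intermediate norms of $\eta$. The same issue recurs in \eqref{eq:HanzEstim4}, where $\nabla^k\bfPsi_\zeta^{-1}\in L^p$ multiplies $\bfPsi_\eta-\bfPsi_\zeta$, which must then be estimated in $L^\infty$. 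Finally, note that \eqref{eq:HanzEstim4} as printed reads $\bfPsi_\eta^{-1}\circ\bfPsi_\eta-\bfPsi_\zeta^{-1}\circ\bfPsi_\zeta$, which is identically zero; your reading of it as a bound on $\bfPsi_\eta^{-1}-\bfPsi_\zeta^{-1}$, via the telescoping identity you write down (which is correct), is the meaningful one and matches how the estimate is used later in the paper.
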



\section{The continuity subproblem} \label{sec:continuitysubprob}


For a known flexible domain $\Omega_\zeta $ and a known velocity field $\bu, $ we aim in this section to construct a strong  solution to the following subproblem: 
\begin{equation}\label{rhoEquAlone}
\partial_t \rho +  (\bu\cdot\nabla )\rho = -\rho\, \divx\bu,  
\end{equation} 
in $I\times\Omega_\zeta\subset \mathbb R^{1+3}$ subject to the following initial condition
\begin{align} 
&\rho(0,\cdot)=\rho_0(\cdot) &\text{in }\Omega_{\zeta(0)}.
\label{initialCondSolvSubPro}  
\end{align} 
Although a characteristic-based existence theory on the evolving domain $\Omega_{\zeta}$ is, in principle, feasible, it necessitates nontrivial control of trajectories to ensure they remain within $\Omega_{\zeta(t)}$ for all suitable time $t \in I$. To avoid this geometric subtlety, we reformulate the problem on the reference configuration $\Omega$ via the Hanzawa transform. For this purpose, we introduce the pull-back variables $\overline\rho := \rho\circ\bfPsi_{\zeta}, \; \overline\bu := \bu\circ\bfPsi_{\zeta}$ and define $\mathbf{B}_{\zeta} = J_\zeta \left(\nabx \bfPsi_\zeta^{-1}\circ \bfPsi_\zeta\right)^\intercal$ where  $ J_\zeta=\mathrm{det}(\nabla\bfPsi_\zeta)$. 

\noindent Thus, the pull-back density $\overline\rho$ solves 
\begin{equation}\label{rhoEquAloneTransform}
\partial_t \overline\rho +  (\overline{\bu}_{eff} \cdot\nabla )\overline\rho  + \delta_{\Div}\,\overline\rho = 0,  
\end{equation} 
on $I\times\Omega \subset \mathbb R^{1+3}$, subject to the initial condition
\begin{align} 
&\overline{\rho}(0,\cdot)=\overline{\rho}_0(\cdot) &\text{in }\Omega,
\label{initialCondSolvSubProTransform}  
\end{align} 
where the effective transport field is given by
\begin{equation}
\overline{\bu}_{eff} := \partial_t \bfPsi_{\zeta}^{-1}\circ \bfPsi_{\zeta} + \dfrac{1}{J_{\zeta} } \mathbf{B}_{\zeta}^\intercal \overline\bu,
\end{equation}
and the local rate of compression -- that is, the transformed divergence of the Eulerian velocity field $\bu$ -- is defined as
\begin{equation}
\delta_{\Div} := \dfrac{1}{J_{\zeta} } \mathbf{B}_{\zeta} \colon \nabla\overline\bu.
\end{equation}
To proceed rigorously, we first introduce our notion of a strong solution.

\begin{definition} 
\label{def:strsolmartFP}
Let the data triple  $(\overline{\rho}_{0}, \overline\bu, \zeta)$ satisfy:
\begin{equation}
\begin{aligned}
\label{fokkerPlanckDataAlone}
&\overline{\rho}_{0} \in  W^{3,2}( \Omega ), 
\\&
\overline\bu\in
W^{2,2}\big( I;L^2(\Omega)  \big) \cap L^2\big(I;W^{4,2}(\Omega )  \big),
\\
&\zeta\in W^{2,\infty}\big(I;W^{1,2}(\omega)  \big) \cap W^{3,2}\big(I;L^{2}(\omega)  \big)\cap L^{2}\big(I;W^{6,2}(\omega)  \big)
,
\\& \overline\bu  \circ \bm{\varphi} =(\partial_t\zeta)\bn
\quad \text{on }I \times \omega,  \quad\|\zeta\|_{L^\infty(I\times\omega)} < L,  \;\; \text{with}  \;\;  \bm{\varphi} \colon \omega \to \partial\Omega.
\end{aligned}
\end{equation}
We call
$\overline\rho$
a \textit{strong solution} of   \eqref{rhoEquAloneTransform}-\eqref{initialCondSolvSubProTransform}  with dataset $(\overline{\rho}_0, \overline\bu, \zeta)$ if 
\begin{itemize}
\item[(a)] $\overline\rho \in   W^{1,\infty} \big(I; W^{2,2}(\Omega ) \big)\cap L^\infty\big(I;W^{3,2}(\Omega )  \big)$; 
\item[(b)] Equation \eqref{rhoEquAloneTransform} holds a.e. in $I\times\Omega$.
\end{itemize}
\end{definition}

\begin{remark}
The regularity assumptions on $(\overline\bu, \zeta) $ in Definition~\ref{def:strsolmartFP} are chosen to ensure compatibility with the  momentum--structure subproblem (cf. Section~\ref{sec:MomStrucSubProb}). In particular, standard interpolation theory yields the continuous embeddings
\begin{align*}
&W^{2,2}\big(I;L^2(\Omega)\big) \cap L^2\big(I;W^{4,2}(\Omega)\big) 
\hookrightarrow X, 
\\
&W^{3,2}\big(I;L^2(\omega)\big) \cap L^2\big(I;W^{6,2}(\omega)\big) 
\hookrightarrow Y,
\end{align*}
for any choice of 
\begin{align*}
X &\in \Big\{ W^{1,\infty}\big(I;W^{1,2}(\Omega)\big),\quad W^{1,2}\big(I;W^{2,2}(\Omega)\big) \Big\},
\\
Y &\in \Big\{ W^{1,\infty}\big(I;W^{3,2}(\omega)\big),\quad W^{2,2}\big(I;W^{2,2}(\omega)\big),\quad W^{1,2}\big(I;W^{4,2}(\omega)\big) \Big\}.
\end{align*}
Hence, we immediately get all the desired regularities for $(\overline\bu, \zeta )$ to be a strong solution of the momentum--structure subproblem. 
\end{remark}

\noindent We now formulate our result on the existence of a unique strong solution to the pull-back continuity equation \eqref{rhoEquAloneTransform}--\eqref{initialCondSolvSubProTransform}.

\begin{theorem}\label{thm:mainFP}
Let  $(\overline{\rho}_0,  \overline\bu, \zeta)$ satisfy  \eqref{fokkerPlanckDataAlone}.
Then there exists a unique strong solution $\overline\rho $ to the pull-back continuity equation  \eqref{rhoEquAloneTransform}--\eqref{initialCondSolvSubProTransform}, in the sense of Definition ~\ref{def:strsolmartFP}, 
 such that
 \begin{equation}
\begin{aligned} \label{eq:ContSubProbEstimate}
\sup_{t\in I} &\Big( \Vert \overline{\rho}\Vert_{W^{3,2}(\Omega )}^2 
+
\Vert \partial_t\overline{\rho}\Vert_{W^{2,2}(\Omega )}^2 
\Big)
\\&\lesssim
 \Vert  \overline{\rho}_0\Vert_{W^{3,2}(\Omega)}^2  
\Bigg(1 + \sup\limits_I \Vert \partial_t \zeta \Vert_{W^{3,2}(\omega)}^2  + \int_I\Vert  \overline\bu
\Vert_{W^{4,2}(\Omega )}^2 \dt
\\
&\quad + 
\int_I\Vert  \partial_{t}^2 \overline\bu
\Vert_{L^{2}(\Omega)}^2 \dt
\Bigg)
  \exp{\bigg( c\int_I \big(\Vert \partial_t \zeta \Vert_{W^{4,2}(\omega)} + 
\Vert  \overline\bu\Vert_{W^{4,2}(\Omega)}\big)  \dt \bigg)}
\end{aligned}
\end{equation} 
holds for some constant $c = c(L) > 0$. 
\end{theorem}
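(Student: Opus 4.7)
Equation \eqref{rhoEquAloneTransform} is a linear transport equation on the \emph{fixed} reference domain $\Omega$ whose coefficients are built from $(\overline\bu,\zeta)$ via the Hanzawa transform, so I would attack it directly by the method of characteristics and close the higher regularity estimates by energy arguments. The plan has four steps.

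\textbf{Step 1 (Tangency of the effective transport field).} Using $(\nabla\bfPsi_\zeta^{-1})\circ\bfPsi_\zeta=(\nabla\bfPsi_\zeta)^{-1}$ and $\mathbf{B}_\zeta=J_\zeta(\nabla\bfPsi_\zeta)^{-\intercal}$ I would rewrite
\[
\overline{\bu}_{eff}=(\nabla\bfPsi_\zeta)^{-1}\bigl(\overline\bu-\partial_t\bfPsi_\zeta\bigr).
\]
On $\partial\Omega$ the cut-off $\phi$ equals one so $\partial_t\bfPsi_\zeta=(\partial_t\zeta)\bn$, while the kinematic assumption in \eqref{fokkerPlanckDataAlone} gives $\overline\bu=(\partial_t\zeta)\bn$ there. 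Hence $\overline{\bu}_{eff}\equiv 0$ on $\partial\Omega$, which is strictly stronger than tangency and will kill every boundary term in the subsequent integrations by parts.

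\textbf{Step 2 (Flow map, existence and uniqueness).} Sobolev embedding $W^{3,2}(\Omega)\hookrightarrow C^1(\overline\Omega)$ in three dimensions combined with Proposition~\ref{prop:estimatePsiEta} makes $\overline{\bu}_{eff}$ spatially Lipschitz. The characteristic ODE $\dot X=\overline{\bu}_{eff}(t,X)$, $X(0,\bx)=\bx$, therefore admits a unique flow $X(t,\cdot)\colon\Omega\to\Omega$, and along characteristics the PDE becomes the scalar linear ODE $\tfrac{d}{dt}(\overline\rho\circ X)=-\delta_{\Div}(\overline\rho\circ X)$. This yields the representation
\[
\overline\rho(t,X(t,\bx))=\overline\rho_0(\bx)\exp\Bigl(-\int_0^t\delta_{\Div}(s,X(s,\bx))\ds\Bigr),
\]
which gives existence, while uniqueness follows from an $L^2$-energy identity that uses Step 1 to discard the boundary term.

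\textbf{Step 3 (Spatial $W^{3,2}$ bound).} I would apply $\partial^\alpha$ with $|\alpha|\le 3$ to \eqref{rhoEquAloneTransform}, test against $\partial^\alpha\overline\rho$, and integrate by parts. Tangency kills all boundary contributions; the transport term produces the skew-symmetric contribution $-\tfrac12\int_\Omega\divx\overline{\bu}_{eff}\,|\partial^\alpha\overline\rho|^2\dx$; the commutators $[\partial^\alpha,\overline{\bu}_{eff}\!\cdot\!\nabx]\overline\rho$ and $[\partial^\alpha,\delta_{\Div}]\overline\rho$ are controlled by Kato--Ponce and Gagliardo--Nirenberg inequalities, placing the top-order derivatives of $\overline{\bu}_{eff}$ (order $4$, which feeds back into order $5$ of $\zeta$ through Proposition~\ref{prop:estimatePsiEta}) in $L^2_x$ and the remaining $\overline\rho$-factors in $L^\infty\cap W^{2,2}$ via $W^{3,2}\hookrightarrow C^1$. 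Summation over $|\alpha|\le 3$ and Grönwall's lemma then yield the $\sup_t\|\overline\rho\|_{W^{3,2}}^2$ part of \eqref{eq:ContSubProbEstimate}, with the exponential factor absorbing exactly $\int_I(\|\partial_t\zeta\|_{W^{4,2}}+\|\overline\bu\|_{W^{4,2}})\dt$.

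\textbf{Step 4 ($\partial_t\overline\rho$ bound).} With the spatial estimate in hand, I would read off $\partial_t\overline\rho=-\overline{\bu}_{eff}\!\cdot\!\nabx\overline\rho-\delta_{\Div}\overline\rho$ from the equation and bound its $W^{2,2}(\Omega)$-norm by the algebra property of $W^{2,2}$ in three dimensions. To express the right-hand side through precisely the norms in \eqref{eq:ContSubProbEstimate} one uses the embedding $W^{2,2}(I;L^2)\cap L^2(I;W^{4,2})\hookrightarrow L^\infty(I;W^{3,2})$ applied to $\overline\bu$, whose quantitative version is responsible for the combined appearance of $\int_I\|\overline\bu\|_{W^{4,2}}^2\dt$ together with $\int_I\|\partial_t^2\overline\bu\|_{L^2}^2\dt$ on the right of \eqref{eq:ContSubProbEstimate}; analogously, the Hanzawa-coefficient regularity through $\bfPsi_\zeta$ is responsible for $\sup_I\|\partial_t\zeta\|_{W^{3,2}}^2$.

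\textbf{Main obstacle.} The delicate point is the commutator bookkeeping in Step 3: the goal is a Grönwall exponential that only involves the time-integrable norms $\|\partial_t\zeta\|_{W^{4,2}(\omega)}$ and $\|\overline\bu\|_{W^{4,2}(\Omega)}$ rather than higher derivatives of the coefficients, and every top-order derivative must therefore be paired with a low-order factor via carefully chosen Sobolev products. The strict identity $\overline{\bu}_{eff}|_{\partial\Omega}=0$ established in Step 1 is indispensable here, since without it the higher-order energy estimates would pick up boundary traces that cannot be absorbed at the present level of regularity.
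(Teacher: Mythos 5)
Your proposal follows essentially the same route as the paper: existence via characteristics in the Hanzawa frame after observing that $\overline{\bu}_{eff}$ vanishes on $\partial\Omega$, a Gr\"onwall-type $W^{3,2}$ transport estimate, and reading $\partial_t\overline\rho$ off the equation to get the $W^{2,2}$ bound; the paper merely outsources your Step 3 to the transport estimates of Bahouri--Chemin--Danchin and your flow-regularity claims to the Bourguignon--Brezis composition lemmas. The one point to shore up is that your Step 3 is an a priori estimate, so the qualitative $W^{3,2}$-regularity needed to justify differentiating and testing must come either from an approximation argument or, as in the paper, from the Sobolev regularity of compositions with the $W^{4,2}$ flow map.
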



\begin{proof}
The existence of a strong solution follows from the method of characteristics. To this end, for all $\bz \in \overline\Omega$, we define the characteristic flow map
\[\mathbf{\Phi} \colon (s, t, \bz) \longmapsto  \mathbf{\Phi}(s,t,\bz), \quad t, s \in \overline{I} \]
as the unique solution of the initial value problem: 
\begin{align}
\frac{\dd}{\ds}\mathbf{\Phi}(s,t,\bz) &= \overline{\bu}_{eff}\big(s ,\mathbf{\Phi}(s,t,\bz)\big),  \label{eq:CharacteristicEq}
\\
\mathbf{\Phi}(t,t,\bz) & =\bz. \label{eq:InitialConCharacteristicEq} 
\end{align}
Importantly, due to the regularity assumptions on $\zeta$ and $\overline\bu$, we have $ \overline{\bu}_{eff} \in L^{1}\big( I; W^{4,2}(\Omega)  \big)$, which ensures by the Cauchy-Lipschitz (Picard-Lindel\"of) theorem (see \cite[Chapter 2, Theorem 7.6]{amann2011ordinary} ), that \eqref{eq:CharacteristicEq}--\eqref{eq:InitialConCharacteristicEq} admits a unique solution.

\noindent Then, along the characteristic curves $s \longmapsto  \mathbf{\Phi}(s,t,\bz) $, the unique strong solution $\overline\rho$ to the PDE  \eqref{rhoEquAloneTransform}--\eqref{initialCondSolvSubProTransform} is given explicitly by
\begin{align}
 \overline\rho(t,\bz) =  \overline{\rho}_0\big(\mathbf{\Phi}(0,t,\bz)\big) \exp{ \bigg( -\int_0^t \delta_{\Div}\big(s,\mathbf{\Phi}(s,t,\bz)\big)\dd s \bigg)}. 
\end{align} 

\noindent Moreover, since $ \overline{\bu}_{eff} $ is a vector field tangent to $\partial\Omega $ -- a property ensured by the identity
\[\partial_t \bfPsi_{\zeta}^{-1}\circ  \bfPsi_{\zeta} = - \dfrac{1}{J_{\zeta}} \mathbf{B}_{\zeta}^\intercal \partial_t \bfPsi_\zeta, \]
  and the definition of  \emph{Hanzawa transform} $\bfPsi_\zeta$ (cf.~\eqref{eq:HanzawaT}), which in particular imply that $\overline{\bu}_{eff} = 0 $ on the boundary --  it follows from \cite[Appendix, Lemma A.6]{bourguignon1974remarks}  that    $ \mathbf{\Phi}  \in C\big( \overline{I} \times \overline{I} ;\,  W^{4,2}(\Omega )   \big).  $  Additionally,  for each $t, s \in \overline{I}, \; \mathbf{\Phi}(t, s, \cdot)  $ is a $C^1-$diffeomorphism from  $\overline{\Omega} $ onto itself, satisfying  $\mathbf{\Phi}(t, s, \partial\Omega) \subseteq \partial\Omega $. 

\noindent Although \cite[Appendix, Lemma A.6]{bourguignon1974remarks}  assumes a vector field in $C\big(I; W^{4,2}(\Omega_\zeta)\big)$, our setting only provides $ \overline{\bu}_{eff} \in L^{2}\big( I; W^{4,2}(\Omega) \big) $.  Nevertheless, since  $C\big(I; W^{4,2}(\Omega)\big) $  is dense in  $ L^{2}\big( I; W^{4,2}(\Omega) \big)$,  the result  can still be recovered by a standard approximation argument and the continuity of the flow map with respect to the transport field. 

\noindent Moreover, from \cite[Appendix, Lemma A.4 \& Lemma A.5]{bourguignon1974remarks}, we obtain that  $\overline{\rho}_{0} \circ \mathbf{\Phi}(0, t, \cdot)   \in W^{3,2}(\Omega)$ and, provided  $\delta_{\Div}(t,\cdot) \in W^{3,2}(\Omega)$  for all $t \in \overline{I}$,  the map  
\[\mathbf{\Phi} \longmapsto  \delta_{\Div}(t,\cdot)\circ \mathbf{\Phi} \]
is continuous from $\mathcal{D}^{3,2}(\Omega) $ into  $W^{3,2}(\Omega) $, where  
\[
\mathcal{D}^{3,2}(\Omega) :=  \Big\{ \mathbf{\Phi} \in W^{3,2}(\Omega) \;\big|\, \mathbf{\Phi} \text{ is a } C^{1}\text{-diffeomorphism from } \partial\Omega \text{ onto itself, with } \mathbf{\Phi}(\partial\Omega) \subseteq \partial\Omega \Big\}.
\]

\noindent Hence, $ \overline\rho\in C\big(\overline{I};W^{3,2}(\Omega)\big)$.\\

For the derivation of the estimate \eqref{eq:ContSubProbEstimate}, we consider the pull-back continuity equation  \eqref{rhoEquAloneTransform}, which yields 
 \begin{align}
\sup_I\Vert\partial_t \overline\rho \Vert_{W^{2,2}(\Omega)}^2
     &\lesssim
\sup_I \Big( \Vert (\overline{\bu}_{eff}\cdot \nabx) \overline\rho\Vert_{W^{2,2}(\Omega)}^2
+\Vert \delta_{\Div}\,\overline\rho 
\Vert_{W^{2,2}(\Omega)}^2 \Big)     \nonumber
\\&
\lesssim 
\sup_I\Vert \overline\rho \Vert_{W^{3,2}(\Omega)}^2
 \sup_I \Big( \Vert  \overline{\bu}_{eff}
\Vert_{W^{2,2}(\Omega)}^2  +   \Vert  \delta_{\Div}
\Vert_{W^{2,2}(\Omega)}^2  \Big)  \nonumber
\\&
\lesssim 
\sup_I\Vert \overline\rho \Vert_{W^{3,2}(\Omega)}^2
\Big(  \sup_I \Vert  \partial_t\zeta
\Vert_{W^{3,2}(\omega)}^2  +   \sup_I  \Vert  \overline\bu
\Vert_{W^{3,2}(\Omega)}^2  \Big)  \nonumber
\\&
\lesssim 
\sup_I\Vert \overline\rho \Vert_{W^{3,2}(\Omega)}^2  
\bigg(\sup_I \Vert  \partial_t\zeta
\Vert_{W^{3,2}(\omega)}^2  + \int_I\Vert  \overline\bu
\Vert_{W^{4,2}(\Omega)}^2\dt
+
\int_I\Vert  \partial_t^2\overline\bu
\Vert_{L^{2}(\Omega)}^2 \dt
\bigg),  \label{eq:rhoEquAloneFinalEstim}
 \end{align}
where \eqref{eq:rhoEquAloneFinalEstim} follows from the continuous embeddings
\begin{align*}
W^{2,2}\big(I;L^2(\Omega)  \big)\cap L^2\big(I;W^{4,2}(\Omega)  \big) 
\hookrightarrow
W^{1,2}\big(I;W^{3,2}(\Omega)\big)
    \hookrightarrow
    L^{\infty}\big((I;W^{3,2}(\Omega)\big). 
\end{align*} 
Furthermore, since for a.e. $t \in I$,  the pointwise estimate  \[ \Vert \delta_{\Div}\,\overline\rho (t)  \Vert_{W^{3,2}(\Omega)} \leq  \Vert \delta_{\Div}\Vert_{W^{3,2}(\Omega)}  \Vert \overline{\rho}(t)   \Vert_{W^{3,2}(\Omega)}  \quad  \text{holds},     \]  
 and  $\Vert \delta_{\Div} \Vert_{W^{3,2}(\Omega)} \in L^{1}(I)$, we may apply \cite[Chapter 3, Theorem 3.14 \& Remark 3.17]{BahouriCheminDanchin2011} to deduce
\begin{align}
  \sup_I \Vert  \overline\rho\Vert_{W^{3,2}(\Omega)}^2 
  \lesssim
  \Vert  \overline\rho_0\Vert_{W^{3,2}(\Omega)}^2 
  \exp{ \bigg( c\int_I \Big(
\Vert  \nabx\overline{\bu}_{eff} \Vert_{W^{3,2}(\Omega)}  + \Vert  \delta_{\Div} \Vert_{W^{3,2}(\Omega)}  \Big) \dt  \bigg) },
\end{align}
for some constant $c  > 0 .$ Using the structure of $ \overline{\bu}_{eff} $ and $\delta_{\Div}$, this further implies
\begin{align}\label{eq:rhoEquAloneFirstEstim}
  \sup_I \Vert  \overline\rho\Vert_{W^{3,2}(\Omega)}^2 
  \lesssim
  \Vert  \overline\rho_0\Vert_{W^{3,2}(\Omega)}^2 
  \exp{ \bigg( c\int_I \Big(
\Vert  \partial_t\zeta \Vert_{W^{4,2}(\Omega)}  + \Vert  \overline\bu \Vert_{W^{4,2}(\Omega)}  \Big) \dt  \bigg) },
\end{align}
where the constant $c = c(L) > 0$ depends on the tubular neighbourhood radius. 

\noindent Combining \eqref{eq:rhoEquAloneFinalEstim} and \eqref{eq:rhoEquAloneFirstEstim} yields the desired estimate.

\end{proof}
To control the nonlinear pressure term in the momentum equation, we first establish the following regularity estimate:

\begin{proposition}\label{prop:mainFP}
Let $\overline\rho $ be the unique strong solution  of  \eqref{rhoEquAloneTransform}-\eqref{initialCondSolvSubProTransform},
with dataset  $(\overline{\rho}_{0},  \overline\bu, \zeta)$, in the sense of Definition ~\ref{def:strsolmartFP}. Then, for
\begin{align*}
p(\overline\rho)=a\overline\rho^{\gamma}, \qquad a>0,\, \gamma > 1,
\end{align*}
it holds that
 \begin{equation}\label{eq:rhogammaregularity}
\begin{aligned} 
p(\overline\rho)  \in  W^{1,\infty} \big(I; W^{2,2}(\Omega ) \big)\cap L^\infty\big(I;W^{3,2}(\Omega )  \big).
\end{aligned}
\end{equation} \\
Moreover, the following estimate is satisfied  
\begin{equation}
\begin{aligned} \label{eq:pressureEstimate}
\sup_{t\in I} &\Big( \Vert \overline\rho^{\gamma}(t)\Vert_{W^{3,2}(\Omega )}
+
\Vert \partial_t\overline\rho^{\gamma}(t)\Vert_{W^{2,2}(\Omega )}
\Big)
\\&\lesssim
 \sup_{t\in I} \Big( \Vert \overline\rho(t)\Vert_{W^{3,2}(\Omega )}^{\gamma}
+
\Vert \partial_t\overline\rho(t)\Vert_{W^{2,2}(\Omega )}^{2 } + \Vert\overline\rho(t)\Vert_{W^{2,2}(\Omega )}^{2(\gamma-1) }
\Big) .
\end{aligned}
\end{equation}

\end{proposition}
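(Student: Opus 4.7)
The strategy is to regard $p(\overline\rho)=a\overline\rho^{\gamma}$ as the composition of $\overline\rho$ with the smooth nonlinearity $F(s)=as^{\gamma}$ on the positive half-line, and then to transfer the regularity of $\overline\rho$ to that of $F(\overline\rho)$ by Moser/Leibniz-type composition estimates. First I would record that $\overline\rho$ stays in a compact subset of $(0,\infty)$ throughout $\overline I\times\overline\Omega$. This is visible directly from the characteristic representation
\begin{equation*}
\overline\rho(t,\bz)=\overline\rho_{0}\bigl(\mathbf{\Phi}(0,t,\bz)\bigr)\exp\!\left(-\int_{0}^{t}\delta_{\Div}\bigl(s,\mathbf{\Phi}(s,t,\bz)\bigr)\ds\right),
\end{equation*}
combined with $\delta_{\Div}\in L^{1}(I;L^{\infty}(\Omega))$ (inherited from $\overline\bu\in L^{2}(I;W^{4,2}(\Omega))$ via the algebra embedding $W^{4,2}\hookrightarrow L^{\infty}$) and the physical positivity $\overline\rho_{0}\ge\underline\rho_{0}>0$. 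This makes $F$ genuinely $C^{\infty}$ on the range of $\overline\rho$, so the chain-rule / Faà di Bruno expansions of $\nabx^{k}\overline\rho^{\gamma}$ are meaningful for any real $\gamma>1$.

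Next, for the spatial bound on $\|\overline\rho^{\gamma}(t)\|_{W^{3,2}(\Omega)}$ I would invoke the standard Moser composition estimate in three dimensions,
\begin{equation*}
\|F(u)\|_{W^{3,2}(\Omega)}\lesssim C\bigl(\|u\|_{L^{\infty}(\Omega)}\bigr)\,\|u\|_{W^{3,2}(\Omega)},
\end{equation*}
valid whenever $u$ takes values in a compact subset of $\operatorname{dom}F$; the constant $C(\cdot)$ is controlled by the $L^{\infty}$-norms of $F',F'',F'''$ on that compact set, which for $F(s)=as^{\gamma}$ scale like $s^{\gamma-1},s^{\gamma-2},s^{\gamma-3}$ at the upper end of the range. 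Using the Sobolev embedding $W^{3,2}(\Omega)\hookrightarrow L^{\infty}(\Omega)$ to bound $\|\overline\rho\|_{L^{\infty}}\lesssim\|\overline\rho\|_{W^{3,2}}$, I would absorb the composition constant into a power of the $W^{3,2}$-norm to arrive at $\|\overline\rho^{\gamma}\|_{W^{3,2}(\Omega)}\lesssim\|\overline\rho\|_{W^{3,2}(\Omega)}^{\gamma}$, which accounts for the first summand on the right of \eqref{eq:pressureEstimate}.

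For the time-derivative bound I would differentiate pointwise,
\begin{equation*}
\partial_{t}\overline\rho^{\gamma}=\gamma\,\overline\rho^{\gamma-1}\,\partial_{t}\overline\rho,
\end{equation*}
and then exploit that $W^{2,2}(\Omega)$ is a Banach algebra in dimension three (again via $W^{2,2}\hookrightarrow L^{\infty}$). Combined with the Moser estimate $\|\overline\rho^{\gamma-1}\|_{W^{2,2}(\Omega)}\lesssim\|\overline\rho\|_{W^{2,2}(\Omega)}^{\gamma-1}$, this yields
\begin{equation*}
\|\partial_{t}\overline\rho^{\gamma}\|_{W^{2,2}(\Omega)}\lesssim\|\overline\rho\|_{W^{2,2}(\Omega)}^{\gamma-1}\,\|\partial_{t}\overline\rho\|_{W^{2,2}(\Omega)},
\end{equation*}
and a single application of Young's inequality with exponents $(2,2)$ produces the $\|\overline\rho\|_{W^{2,2}}^{2(\gamma-1)}+\|\partial_{t}\overline\rho\|_{W^{2,2}}^{2}$ splitting in \eqref{eq:pressureEstimate}.

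Taking the supremum in $t\in I$ delivers the estimate, and the regularity claim \eqref{eq:rhogammaregularity} follows because Definition~\ref{def:strsolmartFP} and Theorem~\ref{thm:mainFP} already guarantee finiteness of the right-hand side of \eqref{eq:pressureEstimate}. The main obstacle I anticipate is controlling the singular derivatives $F^{(j)}(s)\sim s^{\gamma-j}$ of $F(s)=as^{\gamma}$ near $s=0$ when $\gamma$ is not an integer; this is resolved precisely by the strictly positive lower bound for $\overline\rho$ coming from the characteristic formula. A minor secondary issue is giving a rigorous form of the Moser inequality at the scale $W^{3,2}$ for real $\gamma$, which can be done by expanding $\partial^{\alpha}(\overline\rho^{\gamma})$ via Faà di Bruno and estimating each monomial through Hölder and Gagliardo--Nirenberg interpolation against the $L^{\infty}$ and $W^{3,2}$ norms.
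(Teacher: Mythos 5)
Your proposal is correct and follows essentially the same route as the paper: composition of $\overline\rho$ with the smooth map $r\mapsto r^{\gamma}$ on a compact subset of $(0,\infty)$ (the paper cites the Bourguignon--Brezis composition lemma where you invoke Moser estimates), the chain rule $\partial_t\overline\rho^{\gamma}=\gamma\overline\rho^{\gamma-1}\partial_t\overline\rho$ together with the Banach algebra property of $W^{2,2}(\Omega)$, and Young's inequality to produce the splitting in \eqref{eq:pressureEstimate}. If anything, you are more explicit than the paper on two points it leaves implicit, namely the justification of the positive lower and upper bounds for $\overline\rho$ via the characteristic formula and the precise tracking of the powers $\gamma$ and $2(\gamma-1)$.
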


\begin{proof}
Let $D \subset (0, \infty) $ be a bounded domain with smooth boundary such that for a.e. $ t \in I, \;  \bx \in \Omega $, we have  $\overline\rho(t, \bx ) \in D$.   
Furthermore, consider the map  \[ F \colon D \to \mathbb{R} ,  \quad r \longmapsto r^{\gamma}.  \]
Since $F \in C^{\infty}(D),  $ and  $ \overline\rho(t) \in W^{3,2}(\Omega) $ for a.e. $ t \in I, $ it follows from \cite[Section 2, Lemma A.2]{bourguignon1974remarks} that 
\[\overline\rho^{\gamma}(t) = F\circ\overline\rho(t)  \in W^{3,2}(\Omega), \; \text{for a.e. } \; t \in I.   \]
Thus,  \[ \overline\rho^{\gamma} \in L^{\infty}\left( I; W^{3,2}(\Omega) \right).   \]
Moreover,   for a.e. $t \in I, \; \overline{\rho}^{\gamma - 1}(t)   \in W^{2,2}(\Omega) $.  By the Banach algebra property of  $W^{2,2}(\Omega), $ it follows that 
\[\overline\rho^{\gamma - 1}(t) \partial_t \overline\rho(t) \in W^{2,2}(\Omega), \; \text{for a.e. } \, t \in I. 
 \]
Thus  \[ \partial_t \overline\rho^{\gamma} \in L^{\infty}\left( I; W^{2,2}(\Omega) \right),   \]
Therefore,
\[\overline\rho^{\gamma} \in W^{1, \infty}\left( I; W^{2,2}(\Omega) \right)\cap L^{\infty}\left( I; W^{3,2}(\Omega) \right) .   \]

To derive the estimate \eqref{eq:pressureEstimate},  observe that each derivative of $ \overline\rho^{\,\gamma}(t) $ up to order 2 or 3 is a  sum of products of  powers and derivatives of $\overline\rho(t) $ , were all but the highest-order term  are bounded. Hence each term is a product of a bounded and $L^{2}(\Omega)-$function.

\end{proof}


\section{The momentum-structure subproblem}\label{sec:MomStrucSubProb}
\noindent Given a scalar density   
\begin{equation}\label{eq:FixedDensitySpace}
 \overline{\varrho} \in L^\infty\big(I;W^{3,2}(\Omega)\big)\cap W^{1,\infty}\big( I;W^{2,2}(\Omega)\big), 
 \end{equation}
we study the well-posedness of the coupled momentum-structure subproblem with unknowns $( \bv, \eta) $.
In contrast to the continuity equation, whose analysis proceeds via the method of characteristics in the reference configuration $\Omega$, for the momentum-structure subproblem, we rely on the classical \textit{linearisation--fixed-point} framework through which we establish  existence and uniqueness of solutions.

\begin{definition}\label{def:strongSolution}
Let the dataset $(\overline{\varrho}, \eta_0, \eta_*, \bv_0)$ satisfies \eqref{eq:FixedDensitySpace},  \eqref{eq:InitialCondSpace} and \eqref{eq:InitialCondInterface}. 
We call a pair $( \bv,  \eta ) $ a strong solution to the resulting momentum--structure subproblem  \eqref{eq:ShellEq}--\eqref{interfaceCond}  with data $(\overline{\varrho}, \eta_0, \eta_*, \bv_0)$ provided that the following conditions hold:
\begin{itemize}

\item[(a)] The structure displacement $\eta $ satisfies 
\begin{align*}
&\eta \in L^2\big(I; W^{6,2}(\omega)\big)  
\cap W^{3,2}\big(I ; L^{2}(\omega)\big)
  \cap W^{2,\infty}\big(I; W^{1,2}(\omega)\big),
\\
&\partial_t\eta \in L^\infty\big(I; W^{3,2}(\omega)\big) \cap L^{2}\big(I; W^{4,2}(\omega)\big). 
\end{align*}

\item[(b)]  The velocity field $\bv$ satisfies 
\begin{align*}
&\bv\in L^{2}\big(I; W^{4,2}(\Omega_\eta)\big)
\cap W^{2,2}\big(I; L^{2}(\Omega_\eta)\big). 
\end{align*}

\item[(c)]  The momentum--structure subsystem \eqref{eq:ShellEq}--\eqref{interfaceCond} -- with prescribed density $\overline{\varrho}$ -- holds a.e. in $I\times \Omega_\eta$. 

\end{itemize}

\end{definition}

\subsection{Transformation to the reference domain}\label{locaosec}
For a solution $( \varrho, \bv,  \eta )$  of \eqref{eq:ShellEq}--\eqref{interfaceCond}, we set the new function
$
\overline{\bv}=\bv\circ \bfPsi_\eta$, while $\overline{\varrho}=\varrho\circ \bfPsi_\eta.$
Moreover, we define
\begin{equation*}\label{matrices}
\begin{aligned}
\mathbf{A}_\eta=J_\eta \big(\nabx \bfPsi_\eta^{-1}\circ \bfPsi_\eta\big)\big( \nabx \bfPsi_\eta^{-1}\circ \bfPsi_\eta \big)^\intercal,&\\
\mathbf{B}_\eta=J_\eta \left(\nabx \bfPsi_\eta^{-1}\circ \bfPsi_\eta\right)^\intercal,
&\\
\mathbf{h}_\eta(\overline{\bv}) = \left(J_{\eta_0}\overline{\varrho}_0 - J_{\eta}\overline{\varrho} \right)\partial_t\overline{\bv}-J_\eta\overline{\varrho}\,\nabla\overline{\bv}\cdot\partial_t\bfPsi_\eta^{-1}\circ\bfPsi_\eta - \overline{\varrho}\, \overline{\bv}\big( \nabx\overline{\bv} \colon \mathbf{B}_\eta \big),&
\\
\mathbf{H}_\eta(\overline{\bv})
=\mu\left(\mathbf{A}_{\eta_0}-\mathbf{A}_{\eta}\right)\nabla\overline{\bv} + (\lambda+\mu)\left[ \dfrac{1}{J_{\eta_0}}\left(\mathbf{B}_{\eta_0}\colon \nabla\overline\bv \right)\mathbf{B}_{\eta_0} -  \dfrac{1}{J_\eta}\left(\mathbf{B}_\eta\colon\nabla\overline\bv \right)\mathbf{B}_\eta  \right]  + a\big(\mathbf{B}_{\eta} - \mathbf{B}_{\eta_0}  \big)\overline{\varrho}^\gamma,
\end{aligned}
\end{equation*}
where $J_\eta=\mathrm{det}(\nabla\bfPsi_\eta)$.

\noindent To implement the linearisation--maximal-regularity fixed-point scheme, we rewrite the system on the fixed reference domain. The following lemma recasts the momentum--structure subsystem on the fixed reference domain through \(\bfPsi_\eta\).  Its derivation  is a routine change-of-variables computation, we  therefore, omit the proof.

\begin{lemma}\label{lem:EquivProblem}
	Suppose that the dataset
	$(\overline{\varrho}, \eta_0, \eta_*, \bv_0)$
	satisfies  \eqref{eq:FixedDensitySpace}, \eqref{eq:InitialCondSpace} and \eqref{eq:InitialCondInterface}.
	Then $( \bv,  \eta )$ is a strong solution to the momentum--structure subproblem \eqref{eq:ShellEq}--\eqref{interfaceCond} in the sense of Definition \ref{def:strongSolution}, if and only if $( \overline{\bv}, \eta  )$ is a strong solution of
	\begin{align}
	J_{\eta_0}\overline{\varrho}_0 \,\partial_t\overline{\bv} - \Div\left[\mu \mathbf{A}_{\eta_0}\nabla\overline{\bv} + \dfrac{ \lambda + \mu }{J_{\eta_0}} \left(\mathbf{B}_{\eta_0}\colon \nabla\overline\bv\right)\mathbf{B}_{\eta_0} - a\mathbf{B}_{\eta_0} \overline{\varrho}^\gamma \right] 
	= \mathbf{h}_\eta(\overline{\bv})-\Div{\mathbf{H}_\eta(\overline{\bv})} ,
	\label{momEqAloneBar}
	\\
	\partial_t^2\eta - \partial_t\Dely \eta + \Dely^2\eta
	=
   \bn^\intercal \left[\mathbf{H}_\eta( \overline{\bv}) - \mu \mathbf{A}_{\eta_0}\nabla\overline{\bv} - \dfrac{\lambda + \mu}{J_{\eta_0}}\left(\mathbf{B}_{\eta_0}\colon \nabla\overline\bv \right)\mathbf{B}_{\eta_0}  + a \mathbf{B}_{\eta_0}\overline{\varrho}^\gamma  \right]\circ\bm{\varphi} \bn ,
	\label{shellEqAloneBar}
	\end{align}
	with  $\overline{\bv}  \circ \bm{\varphi}  =(\partial_t\eta)\bn$ on $I\times \omega$.
\end{lemma}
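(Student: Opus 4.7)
The plan is to verify the equivalence by a direct change of variables along the Hanzawa map $\bfPsi_\eta$, combined with a linearisation trick: every pulled-back operator is written as its ``$\eta_{0}$-frozen'' counterpart plus an explicit perturbation, and all perturbations are then packaged into $\mathbf{h}_\eta$ and $\mathbf{H}_\eta$. Since $\bfPsi_\eta(t,\cdot)$ is a $W^{k,p}$-diffeomorphism between $\Omega$ and $\Omega_{\eta(t)}$ with controls from Proposition~\ref{prop:estimatePsiEta}, pre- and post-composition with $\bfPsi_\eta$ transports all Bochner--Sobolev norms appearing in Definition~\ref{def:strongSolution} in both directions; this shows the regularity classes for $(\bv,\eta)$ and $(\overline{\bv},\eta)$ coincide, and the initial conditions match trivially at $t=0$. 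It is therefore enough to check that the pointwise identities hold almost everywhere in $I\times\Omega$ and on $I\times\omega$.

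For the momentum equation I would first use the continuity equation $\partial_t\varrho+\Div(\varrho\bv)=0$ to rewrite the left-hand side of \eqref{eq:ContMomentEq} as $\varrho\partial_t\bv+\varrho(\bv\cdot\nabla)\bv$. The key chain-rule identities are
\begin{align*}
(\partial_t\bv)\circ\bfPsi_\eta &= \partial_t\overline{\bv}+\nabla\overline{\bv}\cdot\partial_t\bfPsi_\eta^{-1}\circ\bfPsi_\eta,\\
(\nabla\bv)\circ\bfPsi_\eta &= \nabla\overline{\bv}\cdot(\nabla\bfPsi_\eta^{-1}\circ\bfPsi_\eta),
\end{align*}
together with the Piola identity, which for any tensor field $\mathbf{T}$ gives $J_\eta(\Div\mathbf{T})\circ\bfPsi_\eta=\Div(\mathbf{T}\circ\bfPsi_\eta\cdot\mathbf{B}_\eta)$ since $\mathbf{B}_\eta$ is the cofactor of $\nabla\bfPsi_\eta$. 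Applying these to $\mu\Delta\bv$, $(\lambda+\mu)\nabla\Div\bv$, and $\nabla p(\varrho)$, after multiplication by $J_\eta$, turns the viscous block into $\Div[\mu\mathbf{A}_\eta\nabla\overline{\bv}+(\lambda+\mu)J_\eta^{-1}(\mathbf{B}_\eta\colon\nabla\overline{\bv})\mathbf{B}_\eta]$ and the pressure into $-\Div[a\mathbf{B}_\eta\overline{\varrho}^{\,\gamma}]$. Adding and subtracting the $\eta_0$-analog of each of these coefficients reproduces precisely the linear principal part on the left of \eqref{momEqAloneBar}, while the remainder is exactly $-\Div\mathbf{H}_\eta(\overline{\bv})$. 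The analogous splitting on the time and convective terms yields $\mathbf{h}_\eta(\overline{\bv})$: freezing the coefficient of $\partial_t\overline{\bv}$ from $J_\eta\overline{\varrho}$ down to $J_{\eta_0}\overline{\varrho}_0$ produces the first summand, the chain-rule correction in the time derivative produces the second, and the pulled-back convective term $\overline{\varrho}\,\overline{\bv}(\nabla\overline{\bv}\colon\mathbf{B}_\eta)$ produces the third.

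For the shell equation I would use that on $\partial\Omega$ the Hanzawa map coincides with $\bm{\varphi}_\eta\circ\bm{\varphi}^{-1}$, so the kinematic condition $\bv\circ\bm{\varphi}_\eta=(\partial_t\eta)\bn$ is equivalent to $\overline{\bv}\circ\bm{\varphi}=(\partial_t\eta)\bn$. On the right-hand side of \eqref{eq:ShellEq}, the factor $\bn_\eta\det(\naby\bm{\varphi}_\eta)$ equals, by Nanson's identity applied to $\bm{\varphi}_\eta=\bm{\varphi}+\eta\bn$, the trace on $\partial\Omega$ of $\mathbf{B}_\eta^\intercal\bn$. Substituting the bulk pullbacks of $\mathbb{S}(\nabla\bv)$ and $p(\varrho)\mathbb{I}_{3\times 3}$ obtained above and again splitting into the $\eta_0$-frozen linear part and its complement produces exactly the right-hand side of \eqref{shellEqAloneBar}. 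The reverse implication is read off by tracing every step backwards and post-composing with $\bfPsi_\eta^{-1}$, which is licit under the standing constraint $\|\eta\|_{L^\infty(I\times\omega)}<L$ guaranteeing invertibility from Proposition~\ref{prop:estimatePsiEta}.

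The main obstacle, beyond the tidy bookkeeping, is ensuring that the boundary-stress algebra is consistent: one must verify that the Nanson factor $\bn_\eta\det(\naby\bm{\varphi}_\eta)$ arising in \eqref{eq:ShellEq} and the cofactor trace $\mathbf{B}_\eta^\intercal\bn$ arising from the bulk Hanzawa pullback agree on $\partial\Omega$, so that the \emph{same} tensor $\mathbf{H}_\eta$ governs both the bulk forcing in \eqref{momEqAloneBar} and the boundary forcing in \eqref{shellEqAloneBar}. This identification is what glues the two transformed equations into a genuinely coupled system with matching interface data; everything else reduces to the chain rule and linear algebra.
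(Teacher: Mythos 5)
Your proposal follows essentially the same route as the paper's Appendix~\ref{appendix:proof}: a direct strong-form pullback via the chain rule, the Piola identity for $\mathbf{B}_\eta$ to recast $\Delta\bv$ and $\nabla\Div\bv$ as divergences of $\mathbf{A}_\eta\nabla\overline{\bv}$ and $J_\eta^{-1}(\mathbf{B}_\eta\colon\nabla\overline{\bv})\mathbf{B}_\eta$, the continuity equation to reduce the inertial terms, and the add-and-subtract of the $\eta_0$-frozen coefficients to produce $\mathbf{h}_\eta$ and $\mathbf{H}_\eta$. The only difference is one of emphasis: you spell out the Nanson/cofactor identification on $\partial\Omega$ and the norm transport under $\bfPsi_\eta$, which the paper leaves implicit, but this is detail-filling rather than a different argument.
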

\begin{proof}
The proof is deferred to Appendix \ref{appendix:proof}.
\end{proof}

\noindent We now state the main result of this section:

\begin{theorem}\label{thm:transformedSystem1}
Let  the dataset
	$(\overline{\varrho}, \eta_0, \eta_*, \bv_0)$
	satisfy  \eqref{eq:FixedDensitySpace}, \eqref{eq:InitialCondSpace} and \eqref{eq:InitialCondInterface}. Then there exists  $T_{**}  \in I,$ such that \eqref{momEqAloneBar}--\eqref{shellEqAloneBar}  admits a unique strong solution $(\overline{\bv}, \eta)$ on  $I_{**} := (0, T_{**}].$
\end{theorem}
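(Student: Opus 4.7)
I would implement the classical linearisation--maximal regularity--Banach fixed-point scheme on the transformed system \eqref{momEqAloneBar}--\eqref{shellEqAloneBar} obtained in Lemma~\ref{lem:EquivProblem}. Given $(\hat{\bv}, \hat{\eta})$ in the regularity class of Definition~\ref{def:strongSolution}, I would freeze all geometry-dependent coefficients at the initial configuration $\eta_0$, push the remaining $(\hat{\bv},\hat{\eta})$-dependent pieces into the right-hand side, and solve the linear coupled parabolic--plate problem
\begin{align*}
J_{\eta_0}\overline{\varrho}_0\,\partial_t\overline{\bv} -\Div\Bigl[\mu \mathbf{A}_{\eta_0}\nabla\overline{\bv} + \tfrac{\lambda+\mu}{J_{\eta_0}}\bigl(\mathbf{B}_{\eta_0}\colon\nabla\overline{\bv}\bigr)\mathbf{B}_{\eta_0}\Bigr] &= \mathbf{F}(\hat{\bv},\hat{\eta}), \\
\partial_t^2\eta - \partial_t\Dely\eta + \Dely^2\eta &= G(\hat{\bv},\hat{\eta}),
\end{align*}
coupled by $\overline{\bv}\circ\bm{\varphi} = (\partial_t\eta)\bn$, where $\mathbf{F} = \mathbf{h}_{\hat\eta}(\hat{\bv}) - \Div\mathbf{H}_{\hat\eta}(\hat{\bv}) + a\Div(\mathbf{B}_{\eta_0}\overline{\varrho}^{\,\gamma})$ and $G$ collects the matching boundary contribution obtained by tracing the bracket in \eqref{shellEqAloneBar} onto $\omega$, with $\overline{\varrho}$ entering as a prescribed datum via \eqref{eq:FixedDensitySpace} and Proposition~\ref{prop:mainFP}.

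\textbf{Maximal regularity for the linear system.} I would prove that, given data as in \eqref{eq:FixedDensitySpace}--\eqref{eq:InitialCondInterface}, the linear problem admits a unique solution in
\begin{align*}
\mathcal{X}_{T} := \Bigl\{&\overline{\bv}\in L^2\bigl((0,T);W^{4,2}(\Omega)\bigr)\cap W^{2,2}\bigl((0,T);L^2(\Omega)\bigr), \\
&\eta\in L^2\bigl((0,T);W^{6,2}(\omega)\bigr)\cap W^{3,2}\bigl((0,T);L^2(\omega)\bigr)\cap W^{2,\infty}\bigl((0,T);W^{1,2}(\omega)\bigr)\Bigr\},
\end{align*}
with a bound linear in the data norms and uniform for $T\in(0,T_0)$. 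The base energy comes from testing the momentum equation against $\partial_t\overline{\bv}$ and the plate equation against $\partial_t^2\eta$; the kinematic condition ensures that the boundary integrals produced by the two tests cancel exactly. Coercivity of $\mu\mathbf{A}_{\eta_0}$ together with the bending term $\Dely^2\eta$ and the structural damping $-\partial_t\Dely\eta$ then yields the first energy identity. Higher spatial regularity is lifted by tangential differentiation on $\omega$ and elliptic regularity for the frozen fluid operator on $\Omega$, the kinematic trace feeding each newly gained derivative of $\eta$ back into the Stokes-type problem and vice versa. The compatibility condition $\bv_0\circ\bm{\varphi}_{\eta_0} = \eta_*\bn$ from \eqref{eq:InitialCondInterface} is needed precisely to initialise the time-differentiated energies at $t=0$.

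\textbf{Fixed-point map, self-mapping and contraction.} Define $\mathcal{T}\colon(\hat{\bv},\hat{\eta})\mapsto(\overline{\bv},\eta)$ on the closed ball
\begin{equation*}
\mathcal{B}_{R,T_{**}} := \bigl\{(\hat{\bv},\hat{\eta})\in\mathcal{X}_{T_{**}}\colon (\hat{\bv},\hat{\eta})(0)=(\bv_0,\eta_0),\ \partial_t\hat{\eta}(0)=\eta_*,\ \|(\hat{\bv},\hat{\eta})\|_{\mathcal{X}_{T_{**}}}\leq R\bigr\}.
\end{equation*}
Every term in $\mathbf{h}_{\hat\eta}(\hat{\bv})$ and $\mathbf{H}_{\hat\eta}(\hat{\bv})$ either carries a factor of the form $\mathbf{A}_{\eta_0}-\mathbf{A}_{\hat\eta}$, $\mathbf{B}_{\eta_0}-\mathbf{B}_{\hat\eta}$, $J_{\eta_0}-J_{\hat\eta}$, or $\partial_t\bfPsi_{\hat\eta}^{-1}\circ\bfPsi_{\hat\eta}$, which by Proposition~\ref{prop:estimatePsiEta} and the identity $\hat\eta(t)-\eta_0=\int_0^t\partial_s\hat\eta\ds$ is controlled by $T_{**}^{1/2}$ times the norm of $\partial_t\hat\eta$, or is quadratic in the unknowns and hence absorbs a factor of $T_{**}$ via Sobolev-algebra products. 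Choosing $R$ large enough to absorb the linear contribution from Step~2 and then $T_{**}$ sufficiently small, I obtain $\mathcal{T}(\mathcal{B}_{R,T_{**}})\subset\mathcal{B}_{R,T_{**}}$ and a strict contraction of $\mathcal{T}$ in a slightly weaker topology (one fewer spatial or temporal derivative). A standard density--interpolation argument then upgrades the unique fixed point back to $\mathcal{X}_{T_{**}}$, while uniqueness at full regularity follows from a direct Gr\"onwall estimate on the difference of two strong solutions.

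\textbf{Main obstacle.} The hardest step is the linear maximal-regularity theory of Step~2 at the prescribed top orders $W^{4,2}(\Omega)$ for $\overline{\bv}$ and $W^{6,2}(\omega)$ for $\eta$. The two unknowns are coupled across a codimension-one interface through $\overline{\bv}\circ\bm{\varphi} = (\partial_t\eta)\bn$, so each spatial derivative gained on the fluid side must feed back -- via the trace -- into the plate energy identity, while the structural damping $-\partial_t\Dely\eta$ must, in turn, deliver the boundary regularity of $\partial_t\eta$ required by the Stokes-type equation. Once this linear theory is secured, Steps~3 and~4 reduce to bookkeeping in Sobolev algebras combined with the structural form of $\mathbf{h}_\eta$ and $\mathbf{H}_\eta$, which automatically supplies the small factors of $T_{**}$ needed to close the contraction.
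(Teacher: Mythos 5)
Your proposal is correct in substance and follows the same overall strategy as the paper -- freeze the geometry at $\eta_0$, establish maximal regularity for the resulting linear coupled parabolic--plate system, and close by a Banach contraction on a short time interval -- but the fixed-point map is set up differently. You iterate on the solution pair $(\hat{\bv},\hat{\eta})$ in a ball of the solution space $\mathcal{X}_{T_{**}}$, whereas the paper's map $\mathcal{F}$ acts on the \emph{source terms} $(\mathbf{h},\mathbf{H})\mapsto(\mathbf{h}_\eta(\overline{\bv}),\mathbf{H}_\eta(\overline{\bv}))$ in the closed set $\mathcal{K}_{T,R}\subset\mathcal{S}$, whose definition pins down $\mathbf{h}(0)$ and $\mathbf{H}(0)=0$ so that the second-order compatibility condition \eqref{eq:CC} of Proposition~\ref{thm:transformedSystem} holds automatically at every iterate; in your formulation this compatibility is inherited from the fixed initial data of the iterates, which works but should be stated, since the linear theory is applied to the \emph{time-differentiated} system (the paper tests with $\partial_t^2\overline{\bv}$ and $\partial_t^3\eta$, not $\partial_t\overline{\bv}$ and $\partial_t^2\eta$, to reach $W^{2,2}_tL^2_x$ and $W^{3,2}_tL^2_y$) and genuinely needs \eqref{eq:CC}, not only $\bv_0\circ\bm{\varphi}_{\eta_0}=\eta_*\bn$. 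Iterating on the sources also lets the paper contract directly in the $\mathcal{S}$-norm, so the ``contract in a weaker topology, then upgrade by density--interpolation'' step you propose is not needed (the paper reserves that device for the outer density--momentum coupling in Section~\ref{sec:localstrongfixedpoint}). Two further points of bookkeeping: the linear solvability itself is obtained in the paper by an $\epsilon$-regularised Galerkin scheme with a lifted boundary basis encoding the kinematic condition, which your sketch leaves implicit; and not every nonlinear term is $O(T_{**}^{1/2})$ -- the quadratic convective term and the term carrying $\partial_t\bfPsi_{\hat\eta}^{-1}\circ\bfPsi_{\hat\eta}$ (which is bounded, not small) only contribute pieces controlled by the data, which the paper absorbs into the $R/8$ budget by enlarging $R$; your remark about ``choosing $R$ large enough'' covers this, but the distinction is worth making explicit.
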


\subsection{Linearised system}
Suppose that the data $(\overline{\varrho}, \bv_0, \eta_0, \eta_*)$ satisfies
\begin{align}
	&\overline{\varrho} \in L^\infty\big(I;W^{3,2}(\Omega)\big)\cap W^{1,\infty}\big( I;W^{2,2}(\Omega)\big),
	\; \tag{\ref{eq:FixedDensitySpace}} 
	\\
	& \eta_0\in W^{5,2}(\omega), \qquad \eta_*\in W^{3,2}(\omega),\qquad \bv_0 \in W^{3,2}(\Omega_{\eta_0}). \label{initialcond}
\end{align}
Let the functions $(\mathbf{h}, \mathbf{H})$ satisfy
\begin{align}
	&\mathbf{h} \in L^2(I;W^{2,2}(\Omega))\cap W^{1,2}(I;L^{2}(\Omega)), \qquad
	\mathbf{h}(0)\in {W^{1,2}(\Omega)}, \nonumber
	\\&
	\mathbf{H} \in L^2(I;W^{3,2}(\Omega))\cap W^{1,2}(I;W^{1,2}(\Omega)), 
	\qquad \mathbf{H}(0) = 0,   \label{sourcecond}
\end{align}
where $\mathbf{h}(0)$ and $\mathbf{H}(0)$  are  the initial values of $\mathbf{h}$ and $\mathbf{H}$ respectively.  Now we linearise the system \eqref{momEqAloneBar}--\eqref{shellEqAloneBar} as follows:
\begin{align}
	J_{\eta_0}\overline{\varrho}_0\,\partial_t\overline{\bv} - \Div\left[\mu \mathbf{A}_{\eta_0}\nabla\overline{\bv} + \dfrac{ \lambda + \mu }{J_{\eta_0}} \left(\mathbf{B}_{\eta_0}\colon \nabla\overline\bv\right)\mathbf{B}_{\eta_0} - a\mathbf{B}_{\eta_0} \overline{\varrho}^\gamma \right]  
	= \mathbf{h}-\Div{\mathbf{H}} , \label{linMomAlone}
	\\
	\partial_t^2\eta - \partial_t\Dely \eta + \Dely^2\eta
	=
   \bn^\intercal\left[\mathbf{H} - \mu \mathbf{A}_{\eta_0}\nabla\overline{\bv} - \dfrac{\lambda + \mu}{J_{\eta_0}}\left(\mathbf{B}_{\eta_0}\colon \nabla\overline\bv \right)\mathbf{B}_{\eta_0}   + a\mathbf{B}_{\eta_0}\overline{\varrho}^\gamma \right]\circ\bm{\varphi} \bn ,\label{linShelAlone}
	\end{align}
with  $\overline{\bv}  \circ \bm{\varphi}  =(\partial_t\eta)\bn$ on $I\times \omega$. \\

As a preliminary step for the fixed-point scheme, we establish well-posedness for the system \eqref{linMomAlone}–\eqref{linShelAlone}. 
The following proposition provides existence, uniqueness, and the required maximal-regularity estimate under compatible data assumptions specified therein.

\begin{proposition}
	\label{thm:transformedSystem}
Suppose that the dataset $(\overline{\varrho}, \mathbf{\bv}_0, \eta_0, \eta_*, \mathbf{h}, \mathbf{H})$ satisfy \eqref{eq:FixedDensitySpace}--\eqref{sourcecond} supplemented by  the compatibility condition
 \begin{equation}\label{eq:CC}
 \begin{aligned}
  \Bigg( \dfrac{1}{\overline{\varrho}_{0}J_{\eta_0}} \Big(  \Div\big( \overline{\bm{\tau} }_0 -  \mathbf{H}(0) \big)   + \mathbf{h}(0)  \Big)  \Bigg)\circ \bm{\varphi} 
 & =  \Bigg( \Dely\eta_{*} - \Dely^{2}\eta_{0}  + \bn^\intercal\Big(  \mathbf{H}(0)  - \overline{\bm{\tau} }_0 \Big)\circ\bm{\varphi}\bn   \Bigg)\bn  \quad \text{on} \;\; \omega,
  \end{aligned}  \tag{CC}
 \end{equation}\\
 where 
 \[  \overline{\bm{\tau} }_0 :=  \mu \mathbf{A}_{\eta_0}\nabla\overline{\bv}_0 + \dfrac{ \lambda + \mu }{J_{\eta_0}} \big(\mathbf{B}_{\eta_0}\colon \nabla\overline{\bv}_0\big)\mathbf{B}_{\eta_0}  - a\overline{\varrho}_{0}^{\gamma}\mathbf{B}_{\eta_0} . \]  
	Then there exists a unique  strong solution $( \overline{\bv}, \eta  )$ of \eqref{linMomAlone}--\eqref{linShelAlone} satisfying
\begin{equation}
\begin{aligned}
& \left(\lambda + 2\mu\right)\sup_I\int_\Omega|\partial_t\nabla\overline\bv|^2\dx +\sup_I \int_\omega\left(|\partial_t^2\naby\eta|^2+|\partial_t\naby\Dely\eta|^2\right)\dy + \int_I\int_\Omega |\partial_t^2\overline\bv|^2 \dx\dt
\\
&\qquad   + \left(\lambda + 2\mu\right)\int_I\int_\Omega \left( |\partial_t\nabla^{2}\overline\bv|^{2}  + |\Delta^{2}\overline\bv|^{2} \right)\dx\dt
\\
&\qquad  +\int_I\int_\omega\left(|\partial_t^2\Dely\eta|^2+|\partial_t^3\eta|^2+|\partial_t\Dely^2\eta|^2+|\Dely^3\eta|^2\right)\dy\dt 
\\
& \lesssim 
\Vert \eta_0\Vert_{W^{5,2}(\omega)}^2
+
\Vert \eta_*\Vert_{W^{3,2}(\omega)}^2
+
\Vert \overline{\varrho}_0\Vert_{W^{3,2}(\Omega)}^2
+
\Vert\overline\bv_0\Vert_{W^{3,2}(\Omega)}^2  +
\Vert \mathbf{h}(0)\Vert_{W^{1,2}(\Omega)}^2
+
\Vert \mathbf{H}(0)\Vert_{W^{2,2}(\Omega)}^2
\\
&\qquad +\int_I \left(
\Vert \mathbf{h}\Vert_{W^{2,2}(\Omega)}^2
+
\Vert \mathbf{H}\Vert_{W^{3,2}(\Omega)}^2
+
\Vert\partial_t \mathbf{h}\Vert_{L^{2}(\Omega)}^2
+
\Vert\partial_t\mathbf{H}\Vert_{W^{1,2}(\Omega)}^2  \right)
\\
& \qquad +  \Vert \overline{\varrho}^{\gamma}(t)\Vert_{L^{2}\big( I; W^{3,2}(\Omega ) \big) }^2
+
\Vert \partial_t\overline{\varrho}^{\gamma}(t)\Vert_{L^{2}\big(I; W^{1,2}(\Omega ) \big)}^2   
 \label{linearestimate}
\end{aligned}
\end{equation}
	\end{proposition}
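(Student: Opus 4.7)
I would prove Proposition~\ref{thm:transformedSystem} by a Galerkin approximation on finite-dimensional subspaces of the kinematically coupled space $V := \{(\mathbf{w},\xi)\in W^{1,2}(\Omega)\times W^{2,2}(\omega)\colon \mathbf{w}\circ\bm{\varphi}=\xi\bn\}$, followed by passage to the limit based on the a priori estimate \eqref{linearestimate}. Uniqueness then follows from linearity: the difference of two solutions with vanishing data satisfies the same system with zero right-hand sides, initial data, and boundary coupling, and the energy estimate at level $0$ (see below) forces it to vanish. The ODE system for the Galerkin coefficients is solvable on a short interval since the mass matrix inherited from $J_{\eta_0}\overline{\varrho}_0$ is uniformly coercive (using positivity of the transported density and the bound $\|\eta_0\|_{L^\infty}<L$ that keeps $J_{\eta_0}$ away from zero).

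\textbf{A priori estimates at three differentiation levels.} The estimate \eqref{linearestimate} is produced by testing the coupled linear system at three successive time-differentiation orders, each time using that $\partial_t^k\overline{\bv}\circ\bm{\varphi}=\partial_t^{k+1}\eta\,\bn$ makes the boundary term from integrating the stress tensor by parts cancel with the shell equation tested against $\partial_t^{k+1}\eta$. At level $0$, testing \eqref{linMomAlone} against $\overline{\bv}$ and \eqref{linShelAlone} against $\partial_t\eta$ controls $\overline{\bv}$ in $L^\infty L^2\cap L^2 H^1$, $\eta$ in $L^\infty H^2$, and $\partial_t\eta$ in $L^\infty L^2\cap L^2 H^1$, with the $(\lambda+2\mu)$-weight on the compressible dissipation explicit. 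At level $1$ I differentiate the system in time once and test with $\partial_t\overline{\bv}$ and $\partial_t^2\eta$; at level $2$ I differentiate once more and test with $\partial_t^2\overline{\bv}$ and $\partial_t^3\eta$. The latter yields exactly the top-order time quantities in \eqref{linearestimate}: $\|\partial_t^2\overline{\bv}\|_{L^2L^2}$, $(\lambda+2\mu)\|\partial_t\nabla\overline{\bv}\|_{L^\infty L^2}$, $\|\partial_t^3\eta\|_{L^2L^2}$, $\|\partial_t^2\naby\eta\|_{L^\infty L^2}$ and $\|\partial_t\naby\Dely\eta\|_{L^\infty L^2}$. All products involving $\overline{\varrho}$, $\overline{\varrho}^\gamma$, $\mathbf{h}$, $\mathbf{H}$ and their time derivatives are absorbed by Young's inequality, combining the Banach-algebra property of $W^{2,2}(\Omega)$ with \eqref{eq:FixedDensitySpace} and Proposition~\ref{prop:mainFP}. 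The remaining top spatial norms $\|\Delta^2\overline{\bv}\|_{L^2L^2}$, $\|\Dely^3\eta\|_{L^2L^2}$ and $\|\partial_t\Dely^2\eta\|_{L^2L^2}$ are then obtained by viewing the two equations at each fixed $t$ as elliptic problems: a Stokes-type system with smooth coefficients ($\mathbf{A}_{\eta_0},\mathbf{B}_{\eta_0}\in W^{4,2}$ by Proposition~\ref{prop:estimatePsiEta}, since $\eta_0\in W^{5,2}$) and inhomogeneous Dirichlet data $\partial_t\eta\,\bn$ for \eqref{linMomAlone}, and a biharmonic problem $\Dely^2\eta = F(t)-\partial_t^2\eta+\partial_t\Dely\eta$ for \eqref{linShelAlone}. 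Iterating standard elliptic regularity twice closes the estimate.

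\textbf{Main obstacle.} The delicate point is the initialisation of the level-$2$ scheme: the test functions at $t=0$ must satisfy the discrete analogue of $\partial_t\overline{\bv}(0)\circ\bm{\varphi}=\partial_t^2\eta(0)\bn$ on $\omega$, but $\partial_t\overline{\bv}(0)$ is determined by \eqref{linMomAlone} at $t=0$ in terms of $(\overline{\varrho}_0,\overline{\bv}_0,\mathbf{h}(0),\mathbf{H}(0))$, while $\partial_t^2\eta(0)$ is determined by \eqref{linShelAlone} at $t=0$ in terms of $(\eta_0,\eta_*,\overline{\bv}_0,\overline{\varrho}_0,\mathbf{H}(0))$. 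The compatibility condition \eqref{eq:CC} is precisely the statement that these two prescribed boundary traces agree; it is therefore not a technical convenience but the minimal requirement under which the twice-time-differentiated energy scheme closes. Under \eqref{eq:CC} the Galerkin basis can be chosen so that the projections of the data satisfy the discrete compatibility at $t=0$, all three testing levels close uniformly in the approximation parameter, and passage to the limit yields the strong solution together with \eqref{linearestimate}.
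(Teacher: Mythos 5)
Your overall strategy coincides with the paper's: a Galerkin scheme built on the kinematic coupling $\overline{\bv}\circ\bm{\varphi}=(\partial_t\eta)\bn$, a hierarchy of time-differentiated energy identities in which the boundary stress term from the momentum equation cancels against the shell equation tested with the corresponding time derivative of $\eta$, elliptic regularity at frozen time for the top spatial norms $\|\overline{\bv}\|_{L^2W^{4,2}}$ and $\|\Dely^3\eta\|_{L^2L^2}$, and the reading of \eqref{eq:CC} as the matching of the traces of $\partial_t\overline{\bv}(0)$ and $\partial_t^2\eta(0)\bn$ needed to initialise the highest-order level. The only genuinely different choices are minor: for $\|\partial_t\nabla^2\overline{\bv}\|_{L^2L^2}$ the paper invokes parabolic maximal $L^2$-regularity (Denk--Hieber--Pr\"uss) for the once-differentiated momentum equation transported to $\Omega_{\eta_0}$, whereas you propose iterating elliptic regularity at fixed $t$; and for $\partial_t\Dely^2\eta$, $\Dely^3\eta$ the paper tests with $\Dely^2\widehat\eta$ and $\Dely^3\eta$ directly rather than solving a biharmonic problem. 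Both routes deliver the same bounds. The paper also needs two technical devices you omit: an $\epsilon$-regulariser $\epsilon\partial_t\Dely^2\eta$ to justify the high-order testing at the discrete level, and a weighted zero-mean subspace to remove the constant kernel of $-\Dely$ and $\Dely^2$.

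One step as written would fail and should be corrected. Your ``level 2'' prescribes differentiating the system \emph{twice} in time and then testing with $(\partial_t^2\overline{\bv},\partial_t^3\eta)$. Under \eqref{sourcecond} the data admit only one time derivative ($\mathbf{h}\in W^{1,2}(I;L^2)$, $\mathbf{H}\in W^{1,2}(I;W^{1,2})$, and $\partial_t\overline{\varrho}^{\gamma}\in L^\infty(I;W^{2,2})$ only), so the twice-differentiated equations cannot be formed; moreover, testing a twice-differentiated momentum equation with $\partial_t^2\overline{\bv}$ would produce $\sup_I\|\partial_t^2\overline{\bv}\|_{L^2}^2$ and $\|\partial_t^2\nabla\overline{\bv}\|_{L^2L^2}^2$ on the left, not the quantities $\|\partial_t^2\overline{\bv}\|_{L^2L^2}^2$ and $\sup_I\|\partial_t\nabla\overline{\bv}\|_{L^2}^2$ appearing in \eqref{linearestimate}. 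The quantities you list are exactly those obtained by differentiating the system \emph{once} and testing the resulting equations for $(\widehat{\bv},\widehat\eta)=(\partial_t\overline{\bv},\partial_t\eta)$ with $(\partial_t\widehat{\bv},\partial_t^2\widehat\eta)=(\partial_t^2\overline{\bv},\partial_t^3\eta)$, so that the mass term yields $\int_I\int_\Omega J_{\eta_0}\overline{\varrho}_0|\partial_t^2\overline{\bv}|^2$ as a dissipative ($L^2$-in-time) contribution and the viscous term yields $\sup_I\|\partial_t\nabla\overline{\bv}\|_{L^2}^2$ after integration in time. Restated this way, your scheme is the paper's Steps 1--2 and the rest of the argument closes.
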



\begin{proof}
 Following  \cite[Section 3.2]{breit2023ladyzhenskaya},  we construct the solution with a two-level $\epsilon$-regularised Galerkin scheme. We first add to the left-hand side of the shell equation the structural corrector $\epsilon\partial_t\Dely^2 \eta$ with $0 <\epsilon \ll 1.$ This provides the extra damping  needed to control the higher-order time--boundary terms that arise when the shell equation is time-differentiated and tested with $\partial_{t}^3\eta$ (cf.~\eqref{eq:biLaplacian}), and it yields uniform bounds at the discrete level without invoking  Stokes theory.  We then build a finite-dimensional approximation that encodes the kinematic boundary condition and remove the one-dimensional kernel spanned by the constant function. Concretely, the spatial parts of the shell operator  ($\Dely$ and  $\Dely^2$) annihilate constants, so the displacement $\eta$ is determined only up to an additive time-dependent constant. To restore coercivity and uniqueness we work in the weighted zero-mean subspace
\[\mathcal{Z}_0 :=  \left\{ \xi \in W^{2,2}(\omega) \colon \int\limits_{\partial\Omega} \xi\circ\varphi^{-1} \dH = 0  \right\} =  \left\{ \xi \in W^{2,2}(\omega) \colon \int\limits_{\omega} \xi \sigma \dy  = 0 \right\},  \]
where $\sigma = |\partial_1\varphi \times \partial_2 \varphi|.$

\noindent Let $\{\psi_j\}_{j\in \mathbb{N}}$ be  $L^{2}(\omega)$-orthonormal eigenfunctions of $-\Dely$ on $\omega$ with periodic boundary conditions. We discard the constant eigenfunction $\psi_0 \equiv 1$  and,  if necessary project  the remaining eigenfunctions to $\mathcal{Z}_0 .$ After a Gram-Schmidt step (if necessary), we obtain  an $L^{2}(\omega)$-orthonormal basis $\{\psi_j\}_{j\in \mathbb{N}} \subset \mathcal{Z}_0.$
We approximate the boundary velocity (not $\eta$ itself) by 
\[\partial_t \eta_N (t, \by) = \sum\limits_{j = 1}^{N} p_j(t) \psi_j (\by) , \quad \text{so that} \quad \eta_N (t, \by) =  \eta_0(\by) +  \sum\limits_{j = 1}^{N} \left( \int_{o}^{t} p_j(s) \ds\right) \psi_j(\by),  \]
for some time-dependent coefficients $\{p_j\}_{j\in \mathbb{N}}.$
In order to incorporate  the kinematic boundary condition \eqref{interfaceCond}  in the finite-dimensional approximation space, we use a bounded right-inverse of the trace operator 
\[\mathcal{E} \colon W^{2,2}(\partial\Omega) \to  W^{5/2, 2}(\Omega), \quad \text{such that } \quad  \mathcal{E} (v)_{\mid_{\partial\Omega}} = v , \]
and define the lifted boundary basis functions  $\bm{\psi}_j :=  \mathcal{E} \big( \left(\psi_j\bn \right)\circ \bm{\varphi}^{-1} \big). $ 
Moreover, let $\{\bm{\phi}_j\}_{j\in \mathbb{N}} \subseteq W^{1,2}_{0}(\Omega)$  be $L^{2}(\Omega)$-orthonormal eigenfunctions of the Dirichlet Laplacian.
We then enumerate the fluid velocity basis as  $\{ \bm{\Lambda}_j\}_{j\in \mathbb{N}} = \{\bm{\phi}_j\}_{j\in \mathbb{N}}  \cup \{\bm{\psi}_j\}_{j\in \mathbb{N}}, $ and adopt the single-sum Galerkin ansatz 
\[\bv_N (t, \bx) =  \sum\limits_{j = 1}^{N} a_j (t) \bm{\Lambda}_j (\bx),   \]
with time-dependent coefficients $a_j \colon I \to\mathbb{R}. $
Let $\mathcal{J}_{\partial} \subset\{1,\dots,N\} $ denote the index set of lifted boundary basis functions, so that 
\[ \bm{\Lambda}_j = \bm{\psi}_j \quad \text{for all }\; j \in \mathcal{J}_\partial, \]
while $ \bm{\Lambda}_j = \bm{\phi}_j  $ for all $j \notin  \mathcal{J}_\partial.$
Accordingly,  we make the identification  
\[a_j(t) = p_j(t) \quad \text{for all } \; j \in  \mathcal{J}_\partial \;\; \text{and all }\; t \in I.  \]
Hence, by construction,  the kinematic boundary condition holds at the discrete level,  
\[\bv_N \circ \bm{\varphi} = (\partial_t \eta_N) \bn \ \  \text{on} \ \omega \ \ \text{for all }t\in I .\]

\noindent To streamline notation, we define the bulk inner product and bulk stiffness 
\[m_{\Omega}(\bu, \bv) :=  \int\limits_{\Omega} J_{\eta_0}\overline{\varrho}_0 \bu\cdot \bv \dx, \quad  
b_{\Omega}(\bu, \bv) :=  \int\limits_{\Omega} \left( \mu \mathbf{A}_{\eta_0}\nabla\bu\colon \nabla\bv  + \dfrac{\lambda + \mu}{J_{\eta_0}} (\mathbf{B}_{\eta_0} \colon \nabla\bu)(\mathbf{B}_{\eta_0} \colon \nabla\bv)   \right) \dx . \]
We further define the boundary traction pairing and data pairings
\[t_{\omega}(\bv, \phi) :=  \int\limits_{\omega} \bn^\intercal \left(  \mu \mathbf{A}_{\eta_0}\nabla\bv +   \dfrac{\lambda + \mu}{J_{\eta_0}} (\mathbf{B}_{\eta_0} \colon \nabla\bv)\mathbf{B}_{\eta_0} \right)\circ \bm{\varphi} \bn \phi \dy, \quad l_{\omega}(\phi)  :=  \int\limits_{\omega} \big( \mathbf{H} +  a\mathbf{B}_{\eta_0}\overline{\varrho}^{\gamma} \big)\circ \bm{\varphi}  \bn \phi \dy,  \]
\[l_{\Omega}(\bv) := \int\limits_{\Omega} \big( \mathbf{h}\cdot\bv + \mathbf{H}\colon \nabla\bv + a\mathbf{B}_{\eta_0}\overline{\varrho}^{\gamma} \colon \nabla\bv   \big) \dx.  \]
Similarly, we introduce the forms 
\[m_\eta (\eta, \phi) := \int\limits_\omega \eta\phi \dy, \quad  d_{\eta} (\eta, \phi) := \int\limits_\omega \naby\eta\cdot \naby\phi \dy, \quad k_\eta (\eta, \phi) :=  \int\limits_\omega \Dely\eta\Dely\phi \dy, \quad b_\eta (\eta, \phi) := \int\limits_\omega\Dely \eta\Dely\phi \dy.  \]
Testing the momentum equation with $\bm{\Lambda}_j$ and the shell equation with $\psi_j$ yields, for $j = 1, \ldots, N, $  
\begin{align*}
m_{\Omega}(\partial_t \bv_N, \bm{\Lambda}_j) + b_{\Omega} (\bv_N, \bm{\Lambda}_j) = l_{\Omega}(\bm{\Lambda}_j) + t_{\omega}\big(\bv_N, (\bm{\Lambda}_j\circ\bm{\varphi} )  \big) - l_{\omega}( \bm{\Lambda}_j\circ\bm{\varphi} ),
\\
m_\eta \left( \partial_{t}^2 \eta_N, \psi_j \right) + d_{\eta} \left( \partial_t \eta_N, \psi_j \right)  + k_\eta (\eta_N, \psi_j) +  \epsilon b_\eta \left(\partial_t \eta_N, \psi_j \right)  =   l_{\omega}(\psi_j) - t_{\omega}(\bv_N, \psi_j).
\end{align*}
Introduce the matrices and the load vectors by setting  ( for  $i, j \in \{1, \ldots, N\}$): 
\[\left[\mathbf{M}_{\bv} \right]_{i,j} =  m_{\Omega}\left( \bm{\Lambda}_i , \bm{\Lambda}_j  \right), \quad \left[\mathbf{K}_{\bv} \right]_{i,j} =  b_{\Omega}\left( \bm{\Lambda}_i , \bm{\Lambda}_j  \right), \quad \left[\mathbf{T}_{\omega} \right]_{i,j} = t_{\omega}\left( \bm{\Lambda}_j , \bm{\Lambda}_i\circ\bm{\varphi}  \right),  \]
\[\left[\mathbf{F}_{\Omega} \right]_{i} = l_{\Omega}(\bm{\Lambda}_i), \quad  \left[\mathbf{F}_{\omega} \right]_{i} = l_{\omega}(\bm{\Lambda}_i\circ\bm{\varphi}  ), \quad  \left[\mathbf{L}_{\omega} \right]_{i} = l_{\omega}(\psi_i ),  \]
\[\left[\mathbf{M}_{\eta} \right]_{i,j} = m_\eta ( \psi_i,\psi_j ), \quad \left[\mathbf{D}_{\eta} \right]_{i,j} = d_\eta ( \psi_i,\psi_j ), \quad \left[\mathbf{K}_{\eta} \right]_{i,j} =  k_\eta ( \psi_i,\psi_j ) ,  \]
\[\left[\mathbf{C}_{\eta} \right]_{i,j} = b_\eta ( \psi_i,\psi_j ) ,  \quad  \left[\mathbf{R}_{\eta} \right]_{j}  =  \int\limits_{\omega}  \Dely\eta_0 \Dely\psi_j \dy.  \]
Let $\mathbf{a}(t) = \left(a_1(t),\ldots, a_N(t) \right)^\intercal, \; \mathbf{p}(t) = \left( p_1(t), \ldots, p_N(t) \right)^\intercal$ and $\mathbf{q}(t) = \int_{0}^{t} \mathbf{p}(s)\ds = \left( q_1(t), \ldots, q_N(t)  \right)^\intercal .$ Then the Galerkin coefficients satisfy
\begin{align}\label{eq:MomenGalerkin}
\mathbf{M}_{\bv} \dfrac{\dd \mathbf{a}}{\dt} + \left( \mathbf{K}_{\bv} - \mathbf{T}_{\omega} \right)\mathbf{a} =   \mathbf{F}_{\Omega}  - \mathbf{F}_{\omega} 
\end{align}
\begin{align}\label{eq:EtaGalerkin}
\mathbf{M}_{\eta} \dfrac{\dd^2 \mathbf{q}}{\dt} + \left(\mathbf{D}_{\eta} + \epsilon \mathbf{C}_{\eta} \right) \dfrac{\dd \mathbf{q}}{\dt} + \mathbf{K}_{\eta} \mathbf{q} = \mathbf{L}_{\omega} - \mathbf{T}_{\omega}{\mathbf{a}}  - \mathbf{R}_{\eta}. 
\end{align}
For $\bm{z}(t) = \left(\mathbf{a}(t),  \mathbf{p}(t),   \mathbf{q}(t) \right)^\intercal ,$ the semi-discrete Galerkin system \eqref{eq:MomenGalerkin}--\eqref{eq:EtaGalerkin}  is equivalent to a first-order linear ODE
\begin{align}\label{eq:SemiDiscrete}
\dfrac{\dd \bm{z}(t) }{\dt} =  \mathbf{G}_\epsilon \bm{z}(t) + \bm{f} (t).
\end{align}
The mass matrices $\mathbf{M}_{\bv},\, \mathbf{M}_{\eta}$ are symmetric positive definite, so $\mathbf{G}_\epsilon$ is well-defined. Indeed, all blocks are time-independent and bounded, and $\bm{f} $ inherits the assumed regularity of the data. Hence, the right-hand side is globally Lipschitz in $\bm{z}, $  and the Cauchy--Lipschitz (Picard--Lindel\"of) theorem yields a unique $C^{1}$-in-time solution for the Galerkin coefficients for any admissible  initial data. 
In particular, every operation  used in the subsequent steps -- time differentiation, testing with high-order time derivative, and integration by parts -- is rigorously justified at the discrete level. 

\noindent This is the framework within which the subsequent steps establish the estimate \eqref{linearestimate} and enable the limit $N \to \infty$ and finally $\epsilon \to 0.$

\medskip

 
\noindent \textit{Step 1:} differentiating \eqref{linMomAlone} and \eqref{linShelAlone}   with respect to time,  we deduce that  
\[ \widehat \eta = \partial_t \eta , \quad \widehat \bv = \partial_t \overline{\bv} \]
satisfy

 \begin{equation}\label{eq:LinShellEqTime}
\left\{\begin{aligned}
& \partial_t^2 \widehat\eta - \partial_t\Dely \widehat\eta + \Dely^2 \widehat\eta =  \bn^\intercal \left[\partial_t\mathbf{H} -\mu \mathbf{A}_{\eta_0}\nabla\widehat{\bv} - \dfrac{ \lambda + \mu }{J_{\eta_0}} \left(\mathbf{B}_{\eta_0}\colon \nabla\widehat{\bv}\right)\mathbf{B}_{\eta_0} + a\mathbf{B}_{\eta_0}\partial_t\overline{\varrho}^\gamma \right]\circ\bm{\varphi} \bn
&& \text{in }  I\times\omega
,\\
&\widehat\eta(0,\by) = \widehat\eta_0(\by), \quad (\partial_t \widehat\eta)(0, \by) = \widehat\eta_*(\by)
&& \text{} \forall\, \by \in \omega,
\end{aligned}\right.
\end{equation}

 \begin{equation}\label{eq:LinMomentEqTime}
\left\{\begin{aligned}
&J_{\eta_0}\overline{\varrho}_0\,\partial_t\widehat{\bv}-\Div\left[\mu \mathbf{A}_{\eta_0}\nabla\widehat{\bv} + \dfrac{ \lambda + \mu }{J_{\eta_0}} \left(\mathbf{B}_{\eta_0}\colon \nabla\widehat{\bv}\right)\mathbf{B}_{\eta_0} - a\mathbf{B}_{\eta_0}\partial_t\overline{\varrho}^\gamma \right] 
	= \partial_t \mathbf{h} - \Div{\partial_t \mathbf{H}} \phantom{ + \widehat\eta =   }
&& \text{in }   I\times\Omega,
\\
& \widehat \bv(0, \bx) = \widehat \bv_{0}(\bx)
&&  \text{} \forall\,  \bx \in \Omega,
\end{aligned}\right.
\end{equation}\\
where 
\begin{equation*}
\begin{aligned}
\widehat{\eta}_0 &= \eta_{*}, \quad
\widehat{\eta}_{*} = \Dely\eta_* - \Dely^{2}\eta_0 
+ \bn^\intercal  \left[\mathbf{H}(0) -\mu \mathbf{A}_{\eta_0}\nabla\overline{\bv}_0 - \dfrac{ \lambda + \mu }{J_{\eta_0}} \left(\mathbf{B}_{\eta_0}\colon \nabla\overline{\bv}_0\right)\mathbf{B}_{\eta_0} + a\mathbf{B}_{\eta_0}\overline{\varrho}_{0}^\gamma \right]\circ\bm{\varphi}\bn, \\
\widehat{\bv}_{0} &= \dfrac{1}{J_{\eta_0}\overline{\varrho}_0}\left( 
\Div \left[\mu \mathbf{A}_{\eta_0}\nabla\overline{\bv}_0 + \dfrac{ \lambda + \mu }{J_{\eta_0}} \left(\mathbf{B}_{\eta_0}\colon \nabla\overline{\bv}_0\right)\mathbf{B}_{\eta_0} - a\mathbf{B}_{\eta_0}\overline{\varrho}_{0}^\gamma \right]  + \mathbf{h}(0) - \Div\mathbf{H}(0) \right).
\end{aligned}
\end{equation*}\\
Consider the pair of test functions $\big( \partial^{2}_{t}\widehat\eta, \partial_{t}\widehat{\bv} \big) $ for \eqref{eq:LinShellEqTime} and \eqref{eq:LinMomentEqTime}, we get respectively 
\begin{equation}\label{eq:weakformTimeEta}
\begin{aligned}
&\int_I\int_\omega |\partial^{3}_{t}\eta|^{2} \dy\dt - \int_I\int_\omega \partial^{2}_{t} \Dely\eta \cdot \partial^{3}_{t}\eta \dy\dt +  \int_I\int_\omega \partial_{t}\Dely^{2}\eta \cdot \partial^{3}_{t}\eta \dy\dt \\
&= \int_I\int_\omega\bn^\intercal  \left[\partial_t \mathbf{H} -\mu \mathbf{A}_{\eta_0}\partial_t \nabla\overline{\bv} - \dfrac{ \lambda + \mu }{J_{\eta_0}} \left(\mathbf{B}_{\eta_0}\colon \partial_t \nabla\overline{\bv}\right)\mathbf{B}_{\eta_0} + a\mathbf{B}_{\eta_0}\partial_t\overline{\varrho}^\gamma\right]\circ\bm{\varphi}\bn \cdot \partial^{3}_{t}\eta \dy\dt,  
\end{aligned}
\end{equation}

and 
\begin{equation}\label{eq:weakformTimeV}
\begin{aligned}
& \int_I\int_\Omega J_{\eta_0}\varrho_0 |\partial^{2}_{t} \overline{\bv}|^2 \dx\dt -  \int_I\int_\Omega \Div\left[\mu \mathbf{A}_{\eta_0}\partial_t \nabla\overline{\bv} + \dfrac{ \lambda + \mu }{J_{\eta_0}} \left(\mathbf{B}_{\eta_0}\colon \partial_t \nabla\overline{\bv}\right)\mathbf{B}_{\eta_0} - a\mathbf{B}_{\eta_0}\partial_t\overline{\varrho}^\gamma \right] \cdot \partial^{2}_{t}\overline{\bv} \dx\dt \\
&= \int_I\int_\Omega \big(\partial_t \mathbf{h} - \partial_t \Div\mathbf{H}  \big) \cdot \partial^{2}_{t}\overline{\bv} \dx\dt.
\end{aligned}
\end{equation}\\

However, observe that  \begin{align}\label{eq:biLaplacian}
 - \int_I\int_\omega \partial_{t}\Dely^{2}\eta \cdot \partial^{3}_{t}\eta \dy\dt  = \int_I\int_\omega \partial_{t}\big( \partial_{t}\naby\Dely\eta \cdot \partial^{2}_{t}\naby\eta \big) \dy\dt  +   \int_I\int_\omega |\partial^{2}_{t}\Dely\eta|^2 \dy\dt , 
 \end{align}
and (due to the kinematic boundary condition \eqref{interfaceCond})  
\begin{equation*}
\begin{aligned}
& \int_I\int_\Omega \Div\left[ - \partial_t \mathbf{H} + \mu \mathbf{A}_{\eta_0}\partial_t \nabla\overline{\bv} + \dfrac{ \lambda + \mu }{J_{\eta_0}} \left(\mathbf{B}_{\eta_0}\colon \partial_t \nabla\overline{\bv}\right)\mathbf{B}_{\eta_0} - a\mathbf{B}_{\eta_0}\partial_t\overline{\varrho}^\gamma \right] \cdot \partial^{2}_{t}\overline{\bv} \dx\dt \\
& =  \int_I\int_{\partial\Omega}  \left(\left[ - \partial_t \mathbf{H} + \mu \mathbf{A}_{\eta_0}\partial_t \nabla\overline{\bv} + \dfrac{ \lambda + \mu }{J_{\eta_0}} \left(\mathbf{B}_{\eta_0}\colon \partial_t \nabla\overline{\bv}\right)\mathbf{B}_{\eta_0} - a\mathbf{B}_{\eta_0}\partial_t\overline{\varrho}^\gamma \right]  \partial^{2}_{t}\overline{\bv}\right)\cdot \bn \dH\dt \\
& \quad -   \int_I\int_\Omega \left( - \partial_t \mathbf{H} + \mu \mathbf{A}_{\eta_0}\partial_t \nabla\overline{\bv} + \dfrac{ \lambda + \mu }{J_{\eta_0}} \left(\mathbf{B}_{\eta_0}\colon \partial_t \nabla\overline{\bv}\right)\mathbf{B}_{\eta_0} -a\mathbf{B}_{\eta_0}\partial_t\overline{\varrho}^\gamma \right) \colon \partial^{2}_{t}\nabla\overline{\bv}  \dx\dt \\
& = \int_I\int_\omega  \bn^\intercal \left[ - \partial_t \mathbf{H} + \mu \mathbf{A}_{\eta_0}\partial_t \nabla\overline{\bv} + \dfrac{ \lambda + \mu }{J_{\eta_0}} \left(\mathbf{B}_{\eta_0}\colon \partial_t \nabla\overline{\bv}\right)\mathbf{B}_{\eta_0} - a\mathbf{B}_{\eta_0}\partial_t\overline{\varrho}^\gamma \right] \circ \bm{\varphi}\bn \cdot \partial^{3}_{t}\eta \dy\dt \\
& \quad -   \int_I\int_\Omega \left( - \partial_t \mathbf{H} + \mu \mathbf{A}_{\eta_0}\partial_t \nabla\overline{\bv} + \dfrac{ \lambda + \mu }{J_{\eta_0}} \left(\mathbf{B}_{\eta_0}\colon \partial_t \nabla\overline{\bv}\right)\mathbf{B}_{\eta_0} - a\mathbf{B}_{\eta_0}\partial_t\overline{\varrho}^\gamma \right) \colon \partial^{2}_{t}\nabla\overline{\bv}  \dx\dt.
\end{aligned}
\end{equation*}\\

Thus, adding \eqref{eq:weakformTimeEta} and \eqref{eq:weakformTimeV} together yields, 
\begin{equation}\label{eq:StepOnepriorEstim}
\begin{aligned}
&\int_I\int_\omega |\partial^{3}_{t}\eta|^{2} \dy\dt +  \dfrac{1}{2} \int_I \int_\omega  \partial_t \big( | \partial^{2}_{t} \naby\eta |^2 \big) \dy\dt  + \int_I\int_\Omega J_{\eta_0}\varrho_0 |\partial^{2}_{t} \overline{\bv}|^2 \dx\dt \\
&\quad +   \dfrac{\mu}{2} \int_I\int_\Omega \partial_t \big( \mathbf{A}_{\eta_0} \partial_{t}\nabla\overline{\bv}:\partial_{t}\nabla\overline{\bv} \big) \dx\dt  +  \dfrac{ \lambda + \mu }{2J_{\eta_0}}\int_I\int_\Omega \partial_t \left( |\mathbf{B}_{\eta_0}\colon \partial_t \nabla\overline{\bv}|^2  \right) \\
&=  \int_I\int_\omega \partial_{t}\big( \partial_{t}\naby\Dely\eta \cdot \partial^{2}_{t}\naby\eta \big) \dy\dt  +   \int_I\int_\omega |\partial^{2}_{t}\Dely\eta|^2 \dy\dt  +  \int_I\int_\Omega \partial_t \mathbf{h}  \cdot \partial^{2}_{t}\overline{\bv} \dx\dt \\
& \quad +  \int_I\int_\Omega \partial_t \mathbf{H}\colon \partial^{2}_{t}\nabla\overline{\bv}  \dx\dt   + \int_I\int_\Omega a\mathbf{B}_{\eta_0}\partial_t\overline{\varrho}^\gamma \colon \partial_t^{2} \nabx\overline{\bv} \dx\dt. 
\end{aligned}
\end{equation}

Due to the regularity of $\overline{\varrho}, $ the last term in \eqref{eq:StepOnepriorEstim} requires special attention.  Indeed,

\begin{align}  
\left| \int_I\int_\Omega a\mathbf{B}_{\eta_0}\partial_t\overline{\varrho}^\gamma \colon \partial_t^{2} \nabx\overline{\bv} \dx\dt \right| &\leq \int_I \int_{\partial\Omega} \left| \Big(a\mathbf{B}_{\eta_0}\partial_t\overline{\varrho}^\gamma  \partial_t^{2} \overline{\bv} \Big)\bn \right| \dH\dt  +  \int_I \int_\Omega \left| a \Div\left( \mathbf{B}_{\eta_0}\partial_t\overline{\varrho}^\gamma \right)  \partial_t^{2} \overline{\bv} \right| \dx\dt  \nonumber
\\
&\lesssim   \Vert \partial_t\overline{\varrho}^\gamma  \Vert_{L^{2}\left( I; L^{2}(\partial\Omega)  \right)}       \Vert \partial_t^{3} \eta \Vert_{L^{2}\left( I; L^{2}(\omega) \right)}   \nonumber
\\
&\quad + \Vert \partial_t\overline{\varrho}^\gamma \Vert_{L^{2}\left( I; W^{1,2}(\Omega)  \right)}  \Vert \partial_t^{2}\overline{\bv} \Vert_{L^{2}\left( I; L^{2}(\Omega)  \right)}  \nonumber
\\
& \lesssim \Vert \partial_t\overline{\varrho}^\gamma \Vert_{L^{2}\left( I; W^{1,2}(\Omega)  \right)}   \Big(  \Vert \partial_t^{3} \eta \Vert_{L^{2}\left( I; L^{2}(\omega) \right)}  +  \Vert \partial_t^{2}\overline{\bv} \Vert_{L^{2}\left( I; L^{2}(\Omega)  \right)} \Big), 
\end{align}

To handle the remaining terms on the right-hand side of \eqref{eq:StepOnepriorEstim}, we apply H\"older's inequality and Young's inequality, and make use of  the uniform boundedness of $J_{\eta_0}, \, \mathbf{B}_{\eta_0} $ and  $  \mathbf{A}_{\eta_0}. $ These yield the following estimate:
 
\begin{equation}\label{eq:priorFirstEstimate}
\begin{aligned}
&\int_I\int_\omega |\partial^{3}_{t}\eta|^{2} \dy\dt  + \int_I \int_\omega  \partial_t \left( | \partial^{2}_{t} \naby\eta |^2 \right) \dy\dt  +   \left(\lambda + 2\mu\right)\int_I\int_\Omega \partial_t \left(|\partial_t\nabla\overline\bv|^2\right) \dx\dt   +  \int_I\int_\Omega |\partial^{2}_{t} \overline{\bv}|^2 \dx\dt 
\\
&\lesssim  \overline{\kappa}_0 \sup_I \int_\omega|\partial_t^2\naby\eta|^2\dy   +  c(\overline{\kappa}_0) \sup_I \int_\omega |\partial_t\naby\Dely\eta|^2\dy  + \int_I\int_\omega |\partial^{2}_{t}\Dely\eta|^2 \dy\dt
 \\
&\quad  + \overline{\kappa}_1\int_I\int_\Omega |\partial^{2}_{t} \overline{\bv}|^2 \dx\dt  +  c(\overline{\kappa}_1)\int_I 
\Vert \partial_{t}\mathbf{h}\Vert_{L^{2}(\Omega)}^2\dt +   \overline{\kappa}_2\int_I\int_\omega |\partial^{3}_{t}\eta|^{2} \dy\dt  
\\
&\quad   + c(\overline{\kappa}_2)\int_I \Vert \partial_{t}\mathbf{H}\Vert_{L^{2}(\partial\Omega)}^2\dt  + \overline{\kappa}_3 \int_I\int_\Omega |\partial^{2}_{t} \overline{\bv}|^2 \dx\dt + c(\overline{\kappa}_3)\int_I \Vert \partial_{t}\nabla\mathbf{H}\Vert_{L^{2}(\Omega)}^2\dt
\\
& \quad +  c(\overline{\kappa}_4) \Vert \partial_t\overline{\varrho}^\gamma \Vert_{L^{2}\left( I; W^{1,2}(\Omega)  \right)}^2  +  \overline{\kappa}_4 \int_I\int_\omega |\partial^{3}_{t}\eta|^{2} \dy\dt  + \overline{\kappa}_4 \int_I\int_\Omega |\partial^{2}_{t} \overline{\bv}|^2 \dx\dt 
\end{aligned}
\end{equation}
for arbitrary  $\overline{\kappa}_i > 0, \; i \in \{0,\ldots, 4\}.  $ \\ 

Therefore by choosing $\overline{\kappa}_i, \; i \in \{0,\ldots, 4\} $ small enough and expanding terms involving the data, \eqref{eq:priorFirstEstimate}  yields
\begin{equation}\label{eq:FirstEstimate}
\begin{aligned}
&\int_I\int_\omega |\partial^{3}_{t}\eta|^{2} \dy\dt  + \sup_I \int_\omega  | \partial^{2}_{t} \naby\eta |^2 \dy  + \int_I\int_\Omega |\partial^{2}_{t} \overline{\bv}|^2 \dx\dt  + \left(\lambda + 2\mu\right)\sup_I\int_\Omega   |\partial_t\nabla\overline\bv|^2 \dx \\
&\lesssim   \Vert \eta_0\Vert_{W^{5,2}(\omega)}^2    + \Vert \eta_*\Vert_{W^{3,2}(\omega)}^2 + \Vert \overline{\varrho}_0\Vert_{W^{3,2}(\Omega)}^{2\gamma}   + \Vert \overline{\bv}_0\Vert_{W^{3,2}(\Omega)}^2    + \Vert \mathbf{h}(0)\Vert_{W^{1,2}(\Omega)}^2    + \Vert \mathbf{H}(0)\Vert_{W^{2,2}(\Omega)}^2 
 \\
&\quad  +  c(\kappa_0)\sup_I \int_\omega |\partial_t\naby\Dely\eta|^2\dy  + \int_I\int_\omega |\partial^{2}_{t}\Dely\eta|^2 \dy\dt 
\\
&\quad  + \int_I\left( \Vert \partial_{t}\mathbf{h}\Vert_{L^{2}(\Omega)}^2 +  \Vert \partial_{t}\mathbf{H}\Vert_{W^{1,2}(\Omega)}^2 \right)\dt + \Vert \partial_t\overline{\varrho}^\gamma \Vert_{L^{2}\left( I; W^{1,2}(\Omega)  \right)}^2 . 
\end{aligned}
\end{equation}


\medskip

\noindent \textit{Step 2:}\label{step:two} We now test \eqref{eq:LinShellEqTime}  with $-\partial_t\Dely\widehat{\eta}, $  and obtain 
\begin{equation*}
\begin{aligned}
&\dfrac{1}{2}\int_I\int_\omega \partial_t\left(|\partial^{2}_{t}\naby\eta|^{2}\right)  \dy\dt + \int_I\int_\omega |\partial^{2}_{t} \Dely\eta|^2 \dy\dt +  \dfrac{1}{2}\int_I\int_\omega \partial_{t}\left( | \partial_{t}\naby\Dely\eta|^{2} \right) \dy\dt \\
&= - \int_I\int_\omega \bn^\intercal  \left[\partial_t \mathbf{H} -\mu \mathbf{A}_{\eta_0}\partial_t \nabla\overline{\bv} - \dfrac{ \lambda + \mu }{J_{\eta_0}} \left(\mathbf{B}_{\eta_0}\colon \partial_t \nabla\overline{\bv}\right)\mathbf{B}_{\eta_0}  + a\mathbf{B}_{\eta_0}\partial_t\overline{\varrho}^\gamma  \right]\circ\bm{\varphi} \bn \cdot \partial^{2}_{t}\Dely\eta \dy\dt, 
\end{aligned}
\end{equation*}\\
whence 

\begin{equation}\label{eq:weakformTimeLaplaceEta}
\begin{aligned}
&\dfrac{1}{2}\int_I\int_\omega \partial_t\left(|\partial^{2}_{t}\naby\eta|^{2}\right)  \dy\dt + \int_I\int_\omega |\partial^{2}_{t} \Dely\eta|^2 \dy\dt +  \dfrac{1}{2}\int_I\int_\omega \partial_{t}\left( | \partial_{t}\naby\Dely\eta|^{2} \right) \dy\dt 
\\
&\lesssim \int_I\left( \Vert \partial_t\mathbf{H}\Vert_{W^{1/2, 2}(\partial\Omega)} +  \left( \lambda + 2\mu \right)\Vert \partial_t\naby\overline{\bv}\Vert_{W^{1/2,2}(\partial\Omega)}  \right)\Vert \partial^{2}_{t}\Dely\eta \Vert_{W^{-1/2,2}(\omega)} \dt 
\\
& \quad + \Vert \partial_t\overline{\varrho}^\gamma \Vert_{L^{2}\left( I; W^{1/2,2}(\partial\Omega)  \right)}  \left( \int_I \Vert \partial_t^2 \Dely\eta \Vert_{L^{2}(\omega)}^{2}   \right)^{1/2}
\\
&\lesssim \kappa \int_I \left( \Vert \partial_t\mathbf{H}\Vert^{2}_{W^{1,2}(\Omega)} + \left( \lambda + 2\mu \right)\Vert \partial_t\naby\overline{\bv}\Vert^{2}_{W^{1,2}(\Omega)} + \Vert \partial^{2}_{t}\Dely\eta \Vert_{L^{2}(\omega)}^2 \right) \dt 
\\
&\quad + c(\kappa)\Vert \partial_t\overline{\varrho}^\gamma \Vert_{L^{2}\left( I; W^{1,2}(\Omega)  \right)}^2 +   c(\kappa) \int_I\int_\omega |\partial^{2}_{t}\naby\eta|^{2} \dy\dt.
\end{aligned}
\end{equation}\\
For suitable choice of $\kappa > 0 $ and applying Gr\"onwall's inequality to the last term, \eqref{eq:weakformTimeLaplaceEta}  yields the following estimate

\begin{equation}\label{eq:SecondEstimate}
\begin{aligned}
&\sup_I \int_\omega \left( |\partial^{2}_{t}\naby\eta|^{2} +  |\partial_t \naby\Dely\eta|^{2}  \right)\dy   +  \int_I\int_\omega |\partial^{2}_{t} \Dely\eta|^2 \dy\dt 
\\
& \lesssim  c(\kappa)\left( \Vert \eta_0\Vert_{W^{5,2}(\omega)}^2    + \Vert \eta_*\Vert_{W^{3,2}(\omega)}^2 +  \Vert \overline{\varrho}_0\Vert_{W^{3,2}(\Omega)}^{2\gamma}    + \Vert \overline{\bv}_0\Vert_{W^{3,2}(\Omega)}^2  + \Vert \mathbf{h}(0)\Vert_{W^{1,2}(\Omega)}^2  + \Vert \mathbf{H}(0)\Vert_{W^{2,2}(\Omega)}^2  \right)
\\
&\quad + \kappa\int_I \left( \Vert \partial_{t}\mathbf{H}\Vert_{W^{1,2}(\Omega)}^2   + \left( \lambda + 2\mu \right) \Vert \partial_t\nabla^{2}\overline{\bv}\Vert^{2}_{L^{2}(\Omega)}\right) \dt
 + c(\kappa) \Vert \partial_t\overline{\varrho}^\gamma \Vert_{L^{2}\left( I; W^{1,2}(\Omega)  \right)}^2 .
\end{aligned}
\end{equation}


\medskip

\noindent \textit{Step 3: } \label{step:three} Consider the initial/boundary-value problem 
 \begin{equation}\label{eq:IBVP}
\left\{\begin{aligned}
&J_{\eta_0}\overline{\varrho}_0\,\partial_t\widehat{\bv} - \Div\left[\mu \mathbf{A}_{\eta_0}\nabla\widehat{\bv} + \dfrac{ \lambda + \mu }{J_{\eta_0}} \left(\mathbf{B}_{\eta_0}\colon \nabla\widehat{\bv}\right)\mathbf{B}_{\eta_0} -  a\mathbf{B}_{\eta_0}\partial_t\overline{\varrho}^\gamma  \right]  
	= \partial_t\mathbf{h} - \Div{\partial_t\mathbf{H}} \phantom{ + \widehat\eta =   }
&&   \text{ in } I\times\Omega,
\\
& \widehat{\bv}\circ\bm{\varphi}  = \left( \partial_t\widehat{\eta}\right)\bn  &&   \text{ in }  I\times\omega,
\\
& \widehat{\bv}(0, \cdot) = \widehat{\bv}_{0}(\cdot)
&&  \text{ in } \Omega,
\end{aligned}\right.
\end{equation}\\ 
 with
\[ \widehat \eta = \partial_t \eta , \quad \widehat \bv = \partial_t \overline{\bv}. \]
By setting $\underline{\bv} := \widehat{\bv}\circ\bfPsi_{\eta_0}^{-1},  $ \eqref{eq:IBVP} becomes
\begin{equation}\label{eq:IBVPnew}
\left\{\begin{aligned}
&\partial_t\underline{\bv} - \mu\varrho_0^{-1}\Delta\underline{\bv} - (\lambda + \mu)\varrho_0^{-1}\nabla\Div\underline{\bv}
	=  \left( \varrho_0 J_{\eta_0}\right)^{-1}\left(  \partial_t\mathbf{h} - \Div{ \partial_t\mathbf{H}} \right)\circ \bfPsi_{\eta_0}^{-1} - a\varrho_0^{-1}\partial_t (\nabx\varrho^{\gamma} )
& \text{ in }&  I\times\Omega_{\eta_0},
\\
& \underline{\bv}\circ{\bm{\varphi}}_{\eta_0}  = \left( \partial_t\widehat\eta\right)\bn  & \text{ in }& I\times\omega,
\\
&\underline{\bv}(0, \cdot) =\underline{\bv}_{0}
&  \text{ in } & \Omega_{\eta_0}.
\end{aligned}\right.
\end{equation}\\
Then by maximal regularity for vector-valued parabolic initial-boundary value problems \phantom{value problems} (see \cite[Section 2, Theorem 2.1 \& Theorem 2.2 ]{denk2007optimal})
\begin{equation*}
\begin{aligned}
	\int_I\int_{\Omega_{\eta_0}}\big(\vert
 \partial_t \underline{\bv}\vert^2+ \left(\lambda + 2\mu \right)\vert \nabx^2\underline{\bv}
 \vert^2 \big)\dx\dt
 &\lesssim
 \int_I\Vert \partial_t \widehat{\eta}\Vert_{W^{3/2,2}(\omega)}^2\dt 
 +
 \Vert \partial_t \widehat{\eta}\Vert_{W^{3/4,2}(I;L^2(\omega))}^2 + \int_{\Omega_{\eta_0}}|\nabla\underline \bv_0|^2\dx
 \\
 &\quad+
 \int_I\int_{\Omega_{\eta_0}}
 \big(\vert
 \partial_t\mathbf{h}\circ \bm{\Psi}_{\eta_0}^{-1}
\vert^2
+
\vert(\divx \partial_t \mathbf{H} )\circ \bm{\Psi}_{\eta_0}^{-1}\vert^2    + \vert \partial_t(\nabx\varrho^{\gamma})\vert^2 \big)\dx\dt .
\end{aligned}
\end{equation*}

Recasting the integration back to the reference configuration $\Omega, $ and interpolating the regularity of the shell, we obtain for any $\kappa > 0 \; \text{and}\; \overline\kappa > 0, $ 
\begin{equation*}
\begin{aligned}
	&	\int_I\int_{\Omega}\big(\vert
 \partial_{t} \widehat{\bv}\vert^2+ \left(\lambda + 2\mu \right) \vert \nabx^2\widehat{\bv}
 \vert^2 \big)\dx\dt\\
  & \lesssim
 \kappa
 \int_I\Vert \partial_t\Dely\widehat{ \eta}\Vert_{L^{2}(\omega) }^2\dt
 +
 c(\kappa)
 \sup_I\Vert \partial_t\naby\widehat{ \eta}\Vert_{L^{2}(\omega) }^2
 +
 \overline\kappa \int_I\Vert \partial_t^2\widehat{ \eta}\Vert_{L^{2}(\omega) }^2\dt
 \\
 &\quad + c(\overline\kappa)
 \int_I\int_{\Omega}
 \big( 
 \vert
\partial_t \mathbf{h}
\vert^2
+ 
\vert    \nabx\partial_t\mathbf{H}  \vert^2   + \vert \partial_t(\nabx\overline{\varrho}^{\gamma})\vert^2  \big)\dx\dt 
+ c(\overline\kappa)
\int_{\Omega }|\nabla\widehat{\bv}_0|^2\dx.
\end{aligned}
\end{equation*}
Hence, using the regularity of $\overline{\varrho}^{\gamma} $  (see~\eqref{eq:rhogammaregularity}),  and expanding $ \nabla\widehat{\bv}_0, $we obtain the following estimate:  
\begin{equation}
\begin{aligned}\label{eq:ThirdEstimate}
	&	\int_I\int_{\Omega}\left(\vert
 \partial_{t}^{2} \overline{\bv}\vert^2+  \left(\lambda + 2\mu \right) \vert \partial_t\nabx^2\overline{\bv}
 \vert^2 \right)\dx\dt\\
  & \lesssim 
 c(\overline\kappa)\left(  \Vert \overline{\varrho}_0\Vert_{W^{3,2}(\Omega)}^{2\gamma} + \Vert \overline{\bv}_0\Vert_{W^{3,2}(\Omega)}^2    +  \Vert \mathbf{h}(0)\Vert_{W^{1,2}(\Omega)}^2   
 + \Vert \mathbf{H}(0)\Vert_{W^{2,2}(\Omega)}^2 \right) + 
 \kappa\int_I\int_\omega |\partial_{t}^{2}\Dely \eta|^2 \dy\dt
 \\
 &\quad +
 c(\kappa)\sup_I\int_\omega |\partial_{t}^{2}\naby\eta|^2\dy  +
\overline\kappa \int_I\int_\omega | \partial_t^3\eta|^2\dy\dt
\\
&\quad  + 
 c(\overline\kappa)\int_I
 \big( 
 \Vert
\partial_t \mathbf{h}
\Vert_{L^{2}(\Omega)}^{2}
+
\Vert  \partial_t\mathbf{H}  \Vert_{W^{1,2}(\Omega)}^{2}\big)\dt   +  c(\overline{\kappa}) \Vert \partial_t\overline{\varrho}^\gamma \Vert_{L^{2}\left( I; W^{1,2}(\Omega)  \right)}^2  .
\end{aligned}
\end{equation}


\medskip

\noindent \textit{Step 4:}\label{step:four}  Testing \eqref{eq:LinShellEqTime} with $ \Dely^{2}\widehat\eta $,   we obtain:

\begin{equation} \label{eq:FourthEstimate}
\begin{aligned} 
& \int_I \int_\omega |\partial_t \Dely^{2}\eta|^2 \dy\dt 
 \\
&\lesssim 
  \int_I\int_\omega \left( |\partial^{3}_{t}\eta|^{2} \dy\dt +  |\partial^{2}_{t}\Dely\eta|^{2} \right) \dy\dt 
  +
  \int_I  \Big( \Vert\partial_t\mathbf{H}\Vert_{L^{2}(\partial \Omega)}^2  + (\lambda + 2\mu)\Vert \partial_{t}\nabx\overline{\bv}\Vert_{L^{ 2}(\partial \Omega)}^2  \Big) \dt 
  \\
  &\quad + \Vert \partial_t\overline{\varrho}^\gamma \Vert_{L^{2}\left( I; L^{2}(\partial\Omega)  \right)}^2
   \\
&\lesssim 
   \int_I\int_\omega \left( |\partial^{3}_{t}\eta|^{2} \dy\dt +  |\partial^{2}_{t}\Dely\eta|^{2} \right) \dy\dt  +   \Vert \partial_t\overline{\varrho}^\gamma \Vert_{L^{2}\left( I; W^{1,2}(\Omega)  \right)}^2
  \\
  & \quad +  \int_I  \Big( \Vert\partial_t\mathbf{H}\Vert_{W^{1,2}(  \Omega)}^2  + \Vert \partial_{t}\nabx\overline{\bv}\Vert_{W^{1,2}(  \Omega)}^2  \Big) \dt. 
\end{aligned}
\end{equation}
A bound for the  last term follows directly form the maximal regularity estimate established in \hyperref[step:three]{\textit{Step 3},} allowing for the inclusion of the desired terms in the final estimate.\\


\medskip

\noindent\textit{Step 5: }  Consider the original linearized momentum equation \eqref{linMomAlone} with time  treated as `\textbf{frozen}'. Similar to \hyperref[step:three]{\textit{Step 3} }\!\!,  the change of variable  $ \underline{\underline{\bv}} := \overline{\bv}\circ\bfPsi_{\eta_0}^{-1} $ yields the following elliptic equation
\begin{equation}\label{eq:IBVPnew}
\left\{\begin{aligned}
& \mu\Delta\underline{\underline{\bv}} + (\lambda + \mu)\nabla\Div\underline{\underline{\bv}}
	=  \varrho_0\partial_t\underline{\underline{\bv}} + a\nabx\varrho^{\gamma} - J_{\eta_0}^{-1}\left(  \mathbf{h} - \Div{ \mathbf{H}} \right)\circ \bfPsi_{\eta_0}^{-1}
& \text{ in }&  I\times\Omega_{\eta_0},
\\
& \underline{\underline{\bv}}\circ{\bm{\varphi}}_{\eta_0}  = \left( \partial_t \eta \right)\bn  & \text{ in }& I\times\omega.
\end{aligned}\right.
\end{equation}
Then by maximal regularity property
\begin{equation*}
\begin{aligned}
(\lambda + 2\mu)\int_I \Vert  \underline{\underline{\bv}}
 \Vert_{W^{4,2}(\Omega_{\eta_0})}^{2}\dt
  & \lesssim
  \int_I \Vert \partial_t \underline{\underline{\bv}}
 \Vert_{W^{2,2}(\Omega_{\eta_0})}^{2}\dt
 +
 \int_I\Vert \partial_t \eta\Vert^{2}_{W^{7/2, 2}(\omega)}\dt  
 \\
 &\quad+
 \int_I 
 \left( \Vert
 \mathbf{h}\circ \bm{\Psi}_{\eta_0}^{-1}
 \Vert_{W^{2,2}(\Omega_{\eta_0})}^2
+
\Vert(\divx  \mathbf{H} )\circ \bm{\Psi}_{\eta_0}^{-1} \Vert_{W^{2,2}(\Omega_{\eta_0})}^2 \right)\dt 
\\
&\quad + \int_I \Vert \nabx\varrho^{\gamma}\Vert_{W^{2,2}(\Omega_{\eta_0})}^2 \dt.
\end{aligned}
\end{equation*}
Of note, $\varrho^{\gamma}(t) \in W^{3,2}(\Omega_{\eta_0})$ (see Proposition ~\ref{prop:mainFP}).  Therefore, rewriting the integration in the reference configuration $\Omega$ and using the interpolated shell regularity, we obtain 
\begin{equation}\label{FifthEstimate}
\begin{aligned}
(\lambda + 2\mu)\int_I \Vert  \overline{\bv}
 \Vert_{W^{4,2}(\Omega)}^{2}\dt
  & \lesssim
  \int_I \int_\Omega |\partial_t \nabla^{2}\overline{\bv}
 |^{2}\dx\dt
 +
 \int_I\int_\omega | \partial_t\Dely^{2}\eta|^{2} \dy\dt   + \sup_I \int_\omega |\partial_t\naby\Dely\eta|^2 \dy
 \\
 &\quad+
 \int_I 
 \left( \Vert
 \mathbf{h}
 \Vert_{W^{2,2}(\Omega)}^2
+
\Vert \mathbf{H} \Vert_{W^{3,2}(\Omega)}^2 \right)\dt   + \Vert \overline{\varrho}^\gamma \Vert_{L^{2}\left(I; W^{3,2}(\Omega)  \right)}^{2}  .
\end{aligned}
\end{equation}


\medskip

\noindent \textit{Step 6:} Similar to \hyperref[step:four]{\textit{Step 4} }\!\!, we apply  the Laplace operator to \eqref{linShelAlone} and test the resulting equation with $\Dely^{3}\eta .$ This yields the following estimate
\begin{equation} \label{SixthEstimate}
\begin{aligned} 
& \int_I \int_\omega |\Dely^{3}\eta|^2 \dy\dt 
 \\
&\lesssim 
  \int_I\int_\omega \left( |\partial_{t}\Dely^2\eta|^{2} + |\partial^{2}_{t}\Dely\eta|^{2} \right) \dy\dt  
  +
  \int_I  \left( \Vert\mathbf{H}\Vert_{W^{3,2}(  \Omega)}^2 + (\lambda + 2\mu)\Vert \overline{\bv}\Vert_{W^{4,2}(  \Omega)}^2  \right) \dt 
  \\
  &\quad + \Vert \overline{\varrho}^\gamma \Vert_{L^{2}\left(I; W^{3,2}(\Omega)  \right)}^{2}.
\end{aligned}
\end{equation}

\end{proof}



\subsection{Banach fixed-point argument}\label{subsec:BanachFixed-PointSub}

To establish the existence and uniqueness of the nonlinear system \eqref{momEqAloneBar}-\eqref{shellEqAloneBar}, we rely on the Banach fixed-point theorem. This approach is motivated  by the observation that any solution to   \eqref{momEqAloneBar}-\eqref{shellEqAloneBar} corresponds to a solution of the linearised system  \eqref{linMomAlone}-\eqref{linShelAlone}, with source terms  
\[ \left(\mathbf{h}, \mathbf{H}  \right) =  \left( \mathbf{h}_\eta, \mathbf{H}_\eta  \right).  \]
For this purpose, we introduce the notation
\[\mathbb{W}^{s, m}_{q,p} (I\times \Omega) :=  W^{s,q}\big( I; W^{m, p}(\Omega) \big) \quad \forall\, s, m \geq 0, \;\; \forall\,  q, p \in  (0, \infty].  \]
We further introduce the following spaces: 
 \begin{equation*}
\begin{aligned}
  \mathcal{X}_{\overline\bv} & :=    \mathbb{W}^{0, 4}_{2,2} (I\times \Omega)  \cap \mathbb{W}^{1, 2}_{2,2} (I\times \Omega)  \cap \mathbb{W}^{2, 0}_{2,2} (I\times \Omega) \cap \mathbb{W}^{1, 1}_{\infty,2} (I\times \Omega) , &
\\
  \mathcal{X}_{\eta} &:=  \mathbb{W}^{1, 4}_{2,2} (I\times \omega)  \cap \mathbb{W}^{1, 3}_{\infty, 2} (I\times \omega) \cap \mathbb{W}^{2, 1}_{\infty,2} (I\times \omega) \cap \mathbb{W}^{2, 2}_{2,2} (I\times \omega) \cap \mathbb{W}^{3, 0}_{2,2} (I\times \omega)  \cap \mathbb{W}^{0, 6}_{2,2} (I\times \omega), &
\end{aligned}
\end{equation*}\\
endowed respectively with the norms

\begin{equation*}
\begin{aligned}
  \Vert \overline\bv\Vert_{ \raisebox{-1.5ex}{$\mathcal{X}$}_{\overline\bv} } & :=  \sqrt{\lambda + 2\mu} \left( \int_I\int_\Omega |\Delta^{2}\overline\bv|^{2}\dx\dt  \right)^{1/2}   + \sqrt{\lambda + 2\mu} \left( \int_I\int_\Omega |\partial_t\nabla^{2}\overline\bv|^{2} \dx\dt \right)^{1/2} &
\\
&\;\;\quad + \left( \int_I\int_\Omega |\partial^{2}_t \overline\bv|^{2}\dx\dt  \right)^{1/2} +  \sqrt{\lambda + 2\mu}\sup_I \left( \int_\Omega |\partial_t\nabla\overline{\bv}|^{2} \dx\right)^{1/2},  & 
\\[0.25cm]
 \Vert \eta\Vert_{ \raisebox{-1.5ex}{$\mathcal{X}$}_{\eta} } &:=   \left( \int_I\int_\omega |\partial_t \Dely^{2}\eta|^{2} \dy\dt \right)^{1/2} +  \sup_I \left( \int_\omega |\partial_t \naby\Dely\eta|^{2} \dy \right)^{1/2} + \sup_I \left( \int_\omega |\partial^{2}_t \naby\eta|^{2} \dy \right)^{1/2}  &
\\
&\;\;\quad +   \left( \int_I\int_\omega |\partial^{2}_t \Dely\eta|^{2} \dy\dt \right)^{1/2} +  \left( \int_I\int_\omega |\partial^{3}_t \eta|^{2} \dy\dt \right)^{1/2}   +  \left( \int_I\int_\omega |\Dely^{3}\eta|^{2} \dy\dt \right)^{1/2}. &
\end{aligned}
\end{equation*}\\
Finally, we endow the product space  $\mathcal{X}_{\overline\bv} \times \mathcal{X}_{\eta} $ with the norm: 

\begin{equation*}
\hspace*{-7.5cm}\begin{aligned}
\Vert \left(\overline{\bv}, \eta   \right)\Vert_{ \raisebox{-1.5ex}{$ \mathcal{X}$}_{\overline\bv}  \raisebox{-1.5ex}{$\times \mathcal{X}$}_{\eta} } & :=    \Vert \overline\bv\Vert_{ \raisebox{-1.5ex}{$\mathcal{X}$}_{\overline\bv} }  + \Vert \eta\Vert_{ \raisebox{-1.5ex}{$\mathcal{X}$}_{\eta} }. 
\end{aligned}
\end{equation*}\\
Furthermore, we consider the subspace of initial data
\[\mathcal{I} := \Big\{ \left( \overline{\bv}_0, \eta_0, \eta_{*}  \right) \in W^{3,2}(\Omega)\times W^{5,2}(\omega)\times  W^{3,2}(\omega)\colon \overline{\bv}_{0}\circ\bm{\varphi} = \eta_{*}\bn \quad \text{on } \omega  \Big\}, \]
endowed with the norm
\[\Vert \left( \overline{\bv}_0, \eta_0, \eta_{*}  \right) \Vert_{\mathcal{I} } :=  \Vert  \overline{\bv}_0  \Vert_{W^{3,2}(\Omega)} +  \Vert  \eta_0  \Vert_{W^{5,2}(\omega)} + \Vert  \eta_*  \Vert_{W^{3,2}(\omega)}.  \]
We also consider  the space of source terms
\begin{align*}
\mathcal{S}    :=  \Big\{  &(\mathbf{h}, \mathbf{H}) \colon  \mathbf{h} \in \mathbb{W}^{0, 2}_{2,2} (I\times \Omega) \cap \mathbb{W}^{1, 0}_{2,2} (I\times \Omega),  \\
&  \mathbf{H} \in \mathbb{W}^{0, 3}_{2,2} (I\times \Omega) \cap\mathbb{W}^{1, 1}_{2,2} (I\times \Omega),\; \mathbf{h}(0) \in W^{1,2}(\Omega), \; \mathbf{H}(0) \in W^{2,2}(\Omega) \Big\}, 
\end{align*} 
with the norm

\begin{equation*}
\begin{aligned}
\Vert \left( \mathbf{h}, \mathbf{H} \right) \Vert_{\mathcal{S}} &:= \Vert \mathbf{h} \Vert_{\mathbb{W}^{0, 2}_{2,2} (I\times \Omega) \cap \mathbb{W}^{1, 0}_{2,2} (I\times \Omega) }  +  \Vert \mathbf{H} \Vert_{\mathbb{W}^{0, 3}_{2,2} (I\times \Omega) \cap\mathbb{W}^{1, 1}_{2,2} (I\times \Omega) } 
\\
&\;\;\quad + \Vert \mathbf{h}(0) \Vert_{W^{1,2}(\Omega)}  + \Vert \mathbf{H}(0) \Vert_{W^{2,2}(\Omega)}. 
\end{aligned}
\end{equation*}\\ 
For all $T > 0 $ and $R > 0, $ we introduce a ball-like  closed subset  $\mathcal{K}_{T, R} \subset \mathcal{S} $ -- fundamental for ensuring the well-posedness of the solution to the  nonlinear problem \eqref{momEqAloneBar}-\eqref{shellEqAloneBar} -- defined  by 
 
 \begin{align*}
\mathcal{K}_{T, R}  :=  \Bigg\{ & ( \mathbf{h}, \mathbf{H}) \in \mathcal{S}  \colon   \mathbf{h}(0) =  -J_{\eta_0}\overline{\varrho}_0\nabla\overline{\bv}_0\;\! \partial_t \bfPsi^{-1}_{\eta} \circ \bfPsi_{\eta}(0,\cdot) - \overline{\varrho}_0 \overline{\bv}_0 \big( \nabla\overline{\bv}_0\colon \mathbf{B}_{\eta_0} \big), 
\\
&  \mathbf{H}(0) =  0, \; \text{and} \;\;  \Vert \left( \mathbf{h}, \mathbf{H} \right) \Vert_{\mathcal{S}}  \leq R  \Bigg\}.  
\end{align*}

\begin{remark} 
The  closed set $\mathcal{K}_{T, R} $ is nonempty. Indeed, by the  uniform boundedness of $ \overline{\varrho}_0, \overline{\bv}_0, \eta_0 \; \text{and}\; \eta_*,  $  one may assume  -- after possibly  increasing the radius $R > 0 $ -- that 
 \[ \Vert \underline{\mathbf{h}}_0 \Vert_{W^{2,2}(\Omega)} \leq \dfrac R 2 ,  \]
 where  \[\underline{\mathbf{h}}_0 :=   -J_{\eta_0}\overline{\varrho}_0\nabla\overline{\bv}_0\;\! \partial_t \bfPsi^{-1}_{\eta} \circ \bfPsi_{\eta}(0,\cdot) - \overline{\varrho}_0 \overline{\bv}_0 \big( \nabla\overline{\bv}_0\colon \mathbf{B}_{\eta_0}\big). \] \\
Hence, for  any $T \leq 1, \;   \left( \underline{\mathbf{h}}_0, 0 \right) \in \mathcal{K}_{T, R}. $
\end{remark}  

\begin{remark}
For $ \left( \overline{\bv}_0, \eta_0, \eta_{*}  \right) \in \mathcal{I} $ and $ ( \mathbf{h}, \mathbf{H}) \in  \mathcal{K}_{T, R} $, the compatibility condition  \eqref{eq:CC} is satisfied. 
\end{remark}

 \begin{proof}[\textit{Proof of Theorem \ref{thm:transformedSystem1}}] 
 
Let $ c > 0 $ be arbitrary but fixed, and let  $R > 0 $ be sufficiently large such that 

\[ c\bigg( \Vert \left( \overline{\bv}_0, \eta_0, \eta_{*}  \right) \Vert_{\mathcal{I} }^{2} +  \sup_{t\in I}\Big( \Vert \overline{\varrho}^{\gamma}(t)\Vert_{W^{3,2}(\Omega )}^2
+
\Vert \partial_t\overline{\varrho}^{\gamma}(t)\Vert_{W^{2,2}(\Omega )}^2
\Big) \bigg) \leq \dfrac R 8.  \] \\
Consider the map 
\[ \mathcal{F} \colon \mathcal{K}_{T, R} \to  \mathcal{K}_{T, R},  \quad \left( \mathbf{h}, \mathbf{H} \right) \mapsto \Big( \mathbf{h}_\eta (\overline{\bv}), \, \mathbf{H}_\eta ( \overline{\bv} ) \Big),   \] 
where the couple $\left( \overline{\bv}, \eta  \right) \; \text{appearing in }  \left( \mathbf{h}_\eta, \mathbf{H}_\eta \ \right),   $  is the solution of the linearised problem 
 \eqref{linMomAlone}-\eqref{linShelAlone},  with source term $\left( \mathbf{h}, \mathbf{H} \right)  $  and initial condition $ \left( \overline{\bv}_0, \eta_0, \eta_{*}  \right). $ 
 
 We aim to prove that the mapping $\mathcal{F} $ is a strict contraction for a suitably small time $ T > 0. $ To streamline the argument, we present the proof in two steps.\\

 \medskip
 
 \noindent\textbf{Step 1: }  $\mathcal{F}-$invariance of the set $\mathcal{K}_{T, R}. $ \\  
 First, observe that the estimate in Proposition ~\ref{thm:transformedSystem}  is of the form 
 
 \begin{align}  
 \Vert \left( \overline{\bv}, \eta   \right)\Vert_{ \raisebox{-1.5ex}{$ \mathcal{X}$}_{\overline\bv}  \raisebox{-1.5ex}{$\times \mathcal{X}$}_{\eta} }^2  & \leq Ce^{CT} \bigg( \Vert \left(  \overline{\bv}_0, \eta_0, \eta_{*}  \right) \Vert_{\mathcal{I} }^{2}  + T\sup_{t\in I}\left( \Vert \overline{\varrho}^{\gamma}(t)\Vert_{W^{3,2}(\Omega )}^2 + \Vert \partial_t\overline{\varrho}^{\gamma}(t)\Vert_{W^{2,2}(\Omega )}^2 \right)   \nonumber 
 \\
 & \quad +  \Vert \left( \mathbf{h}, \mathbf{H} \right) \Vert_{\mathcal{S}}^{2}  \bigg),  \label{eq:StepOneInvarianceEstim}
 \end{align}
 -- a consequence of the application of Gr\"onwall's lemma -- 
where the constant $C > 0$ depends affinely on $\mu $ and $\lambda. $ 

Without loss of generality, we assume that $T \leq 1, $ and that the constant $C, $ could depend on $R. $ Then, from the estimate \eqref{eq:StepOneInvarianceEstim}, we deduce that 
\begin{equation}\label{eq:UniformBound}
 \Vert \left( \overline{\bv}, \eta   \right)\Vert_{  \raisebox{-1.5ex}{$\mathcal{X}$}_{\overline\bv}  \raisebox{-1.5ex}{$\times \mathcal{X}$}_{\eta} } \leq C.
\end{equation}\\
However, from the continuity of the determinant function, we have that 

\begin{align}
\Vert J_{\eta_0} -J_{\eta} \Vert_{\mathbb{W}^{0, 3}_{\infty, 2} (I\times \Omega)  }
  & \leq 
  \widetilde{C}\Vert \nabla\bfPsi_{\eta_0} -  \nabla\bfPsi_\eta   \Vert_{\mathbb{W}^{0, 3}_{\infty, 2} (I\times \Omega)  }  \nonumber
 \\
 &\leq
   \widetilde{C}\Vert \bfPsi_{\eta_0} -  \bfPsi_\eta   \Vert_{\mathbb{W}^{0, 4}_{\infty, 2} (I\times \Omega)  }  \nonumber
  \\
  & \leq  \overline{C}\Vert \eta_0 -  \eta   \Vert_{\mathbb{W}^{0, 4}_{\infty, 2} (I\times \omega) }  \label{ineq:HanzawaT}
  \\
  & \leq \overline{C} T^{1/2} \Vert \partial_t \eta   \Vert_{\mathbb{W}^{0, 4}_{2, 2} (I\times \omega)  }   \nonumber
  \\
  & \leq CT^{1/2},         \nonumber
\end{align}
where \eqref{ineq:HanzawaT} follows from the Hanzawa transform (cf. ~ Proposition \ref{prop:estimatePsiEta}).  \\
Moreover, 

\begin{align*}
\Vert \mathbf{B}_{\eta_0} - \mathbf{B}_{\eta} \Vert_{\mathbb{W}^{0, 3}_{\infty, 2} (I\times \Omega)  }
  & \leq 
 \Vert J_{\eta_0} \Vert_{ W^{3,2}(\Omega) } \left\Vert \Big(\nabla\bfPsi_{\eta_0}^{-1}\circ \bfPsi_{\eta_0}\Big)^{\intercal} -  \Big(\nabla\bfPsi_{\eta}^{-1}\circ\bfPsi_{\eta}\Big)^{\intercal}   \right\Vert_{\mathbb{W}^{0, 3}_{\infty, 2} (I\times \Omega)  }  
 \\
 &\quad +   \Vert J_{\eta_0} - J_\eta \Vert_{\mathbb{W}^{0, 3}_{\infty, 2} (I\times \Omega) } \left\Vert \Big( \nabla\bfPsi_{\eta}^{-1}\circ\bfPsi_{\eta}\Big)^{\intercal} \right\Vert_{\mathbb{W}^{0, 3}_{\infty, 2} (I\times \Omega) }
  \\
   & \leq \overline{C}_1 \Vert \eta_0 - \eta  \Vert_{\mathbb{W}^{0, 4}_{\infty, 2} (I\times \omega)  } + \overline{C}_2  \Vert J_{\eta_0} - J_\eta \Vert_{\mathbb{W}^{0, 3}_{\infty, 2} (I\times \Omega) } 
  \\
   & \leq \overline{C}_1 T^{1/2} \Vert \partial_t \eta  \Vert_{\mathbb{W}^{0, 4}_{2, 2} (I\times \omega)  } + \overline{C}_3  T^{1/2}
  \\
  & \leq CT^{1/2},        
\end{align*}  

i.e., 
\begin{equation}
\Vert \mathbf{B}_{\eta_0} - \mathbf{B}_{\eta} \Vert_{\mathbb{W}^{0, 3}_{\infty, 2} (I\times \Omega)  } \leq CT^{1/2}.
\end{equation}\\
It also holds that 
\begin{equation}
 \Vert \partial_t \bfPsi_{\eta}^{-1}\circ \bfPsi_\eta \Vert_{\mathbb{W}^{0, 2}_{\infty, 2} (I\times \Omega) }  \leq C.
\end{equation}\\
Moreover, due to the properties of $\bfPsi_{\eta} $ and $\bfPsi_{\eta}^{-1}, $  together with \eqref{eq:UniformBound}, $\mathbf{B}_\eta $ and $J_\eta$  satisfy 
\begin{equation}
\Vert \mathbf{B}_{\eta} \Vert_{\mathbb{W}^{0, 3}_{\infty, 2} (I\times \Omega)  } \leq C, \quad  \Vert J_{\eta} \Vert_{\mathbb{W}^{0, 3}_{\infty, 2} (I\times \Omega)  } \leq C. 
\end{equation}\\
Therefore, we deduce that  

\begin{align*}
\Vert \mathbf{h}_\eta \Vert_{\mathbb{W}^{0, 2}_{2, 2} (I\times \Omega)  }  &\leq   \Vert J_{\eta_0} - J_{\eta} \Vert_{\mathbb{W}^{0, 2}_{\infty, 2} (I\times \Omega) }  \Vert  \overline{\varrho}_0 \Vert_{W^{2,2}(\Omega) }  \Vert \partial_t\overline{\bv} \Vert_{\mathbb{W}^{0, 2}_{2, 2} (I\times \Omega) }
\\
& \quad + \Vert J_{\eta} \Vert_{\mathbb{W}^{0, 2}_{\infty, 2} (I\times \Omega) } \Vert \overline{\varrho}_0 - \overline{\varrho} \Vert_{\mathbb{W}^{0, 2}_{\infty, 2} (I\times \Omega) } \Vert \partial_t\overline{\bv} \Vert_{\mathbb{W}^{0, 2}_{2, 2} (I\times \Omega) }
\\
& \quad +  \Vert  J_{\eta} \Vert_{\mathbb{W}^{0, 2}_{\infty, 2} (I\times \Omega) } \Vert  \overline{\varrho} \Vert_{\mathbb{W}^{0, 2}_{\infty, 2} (I\times \Omega) }  \Vert \overline{\bv} \Vert_{\mathbb{W}^{0, 4}_{2, 2} (I\times \Omega) }  \Vert \partial_t \bfPsi_{\eta}^{-1}\circ \bfPsi_\eta \Vert_{\mathbb{W}^{0, 2}_{\infty, 2} (I\times \Omega) }
\\
& \quad +  \Vert  \overline{\varrho} \Vert_{\mathbb{W}^{0, 2}_{\infty, 2} (I\times \Omega) }  \Vert \overline{\bv} - \overline{\bv}_0 \Vert_{\mathbb{W}^{0, 2}_{\infty, 2} (I\times \Omega) }  \Vert \overline{\bv} \Vert_{\mathbb{W}^{0, 4}_{2, 2} (I\times \Omega) }  \Vert \mathbf{B}_{\eta}\Vert_{\mathbb{W}^{0, 2}_{\infty, 2} (I\times \Omega) } 
\\
& \quad +  \Vert  \overline{\varrho} \Vert_{\mathbb{W}^{0, 2}_{\infty, 2} (I\times \Omega) }  \Vert  \overline{\bv}_0 \Vert_{W^{2,2}(\Omega) }  \Vert \overline{\bv} \Vert_{\mathbb{W}^{0, 4}_{2, 2} (I\times \Omega) }  \Vert \mathbf{B}_{\eta}\Vert_{\mathbb{W}^{0, 2}_{\infty, 2} (I\times \Omega) }
\\
& \leq  CT^{1/2} +  c\bigg( \Vert \left( \overline{\bv}_0, \eta_0, \eta_{*}  \right) \Vert_{\mathcal{I} }^{2} + \sup_{t\in I}\left( \Vert \overline{\varrho}^{\gamma}(t)\Vert_{W^{3,2}(\Omega )}^2 + \Vert \partial_t\overline{\varrho}^{\gamma}(t)\Vert_{W^{2,2}(\Omega )}^2 \right)  \bigg),
\end{align*}
whence

\begin{equation}
\Vert \mathbf{h}_\eta \Vert_{\mathbb{W}^{0, 2}_{2, 2} (I\times \Omega)  } \leq CT^{1/2} + \dfrac{R}{8}.
\end{equation}\\
In addition, we estimate $ \partial_t\mathbf{h}_\eta, $ yielding 
\begin{align*}
\Vert \partial_t\mathbf{h}_\eta \Vert_{\mathbb{W}^{0, 0}_{2, 2} (I\times \Omega)  }  &\leq  \Vert \partial_t \big( J_\eta \overline{\varrho}\big) \Vert_{\mathbb{W}^{0, 0}_{\infty, \infty} (I\times \Omega) } \Vert \partial_{t}\overline{\bv} \Vert_{\mathbb{W}^{0, 0}_{2, 2} (I\times \Omega) }
\\
& \quad + \Vert J_{\eta_0} - J_{\eta} \Vert_{\mathbb{W}^{0, 0}_{\infty, \infty} (I\times \Omega)  } \Vert \overline{\varrho}_0 \Vert_{L^{\infty}(\Omega) } \Vert \partial_{t}^{2}\overline{\bv} \Vert_{\mathbb{W}^{0, 0}_{2, 2} (I\times \Omega) }
\\
& \quad +  \Vert J_{\eta} \Vert_{\mathbb{W}^{0, 0}_{\infty, \infty} (I\times \Omega)  }\Vert \overline{\varrho}_0 - \overline{\varrho} \Vert_{\mathbb{W}^{0, 0}_{\infty, \infty} (I\times \Omega)  } \Vert \partial_{t}^{2}\overline{\bv} \Vert_{\mathbb{W}^{0, 0}_{2, 2} (I\times \Omega) }
\\
& \quad + \Vert \partial_t \big( J_\eta \overline{\varrho}\big) \Vert_{\mathbb{W}^{0, 0}_{\infty, \infty} (I\times \Omega) }  \Vert \nabla\overline{\bv} \Vert_{\mathbb{W}^{0, 0}_{2, 2} (I\times \Omega) } \left\Vert \partial_t \bfPsi_{\eta}^{-1}\circ \bfPsi_\eta  \right\Vert_{\mathbb{W}^{0, 0}_{\infty, \infty} (I\times \Omega) }
\\
& \quad + \Vert  J_\eta \overline{\varrho} \Vert_{\mathbb{W}^{0, 0}_{\infty, \infty} (I\times \Omega)}  \Vert \partial_t \nabla\overline{\bv} \Vert_{\mathbb{W}^{0, 0}_{2,2} (I\times \Omega) } \left\Vert \partial_t \bfPsi_{\eta}^{-1}\circ \bfPsi_\eta  \right\Vert_{\mathbb{W}^{0, 0}_{\infty, \infty} (I\times \Omega) }
\\
& \quad + \Vert  J_\eta \overline{\varrho} \Vert_{\mathbb{W}^{0, 0}_{\infty, \infty} (I\times \Omega)}  \Vert  \nabla\overline{\bv} \Vert_{\mathbb{W}^{0, 0}_{2, 2} (I\times \Omega) } \left\Vert \partial_t \Big(\partial_t \bfPsi_{\eta}^{-1}\circ \bfPsi_\eta \Big) \right\Vert_{\mathbb{W}^{0, 0}_{\infty, \infty} (I\times \Omega) }
\\
& \quad + \Vert \partial_t \overline{\varrho} \Vert_{\mathbb{W}^{0, 0}_{\infty, \infty} (I\times \Omega)} \Vert \overline{\bv} - \overline{\bv}_0\Vert_{\mathbb{W}^{0, 0}_{\infty, \infty} (I\times \Omega) }  \Vert \nabla\overline{\bv} \Vert_{\mathbb{W}^{0, 0}_{2, 2} (I\times \Omega) }  \left\Vert \mathbf{B}_\eta \right \Vert_{\mathbb{W}^{0, 0}_{\infty, \infty} (I\times \Omega) }  
\\
& \quad + \Vert \partial_t \overline{\varrho} \Vert_{\mathbb{W}^{0, 0}_{\infty, \infty} (I\times \Omega) } \Vert  \overline{\bv}_0\Vert_{L^{\infty}(\Omega) }  \Vert \nabla\overline{\bv} \Vert_{\mathbb{W}^{0, 0}_{2, 2} (I\times \Omega) }  \left\Vert \mathbf{B}_\eta \right \Vert_{\mathbb{W}^{0, 0}_{\infty, \infty} (I\times \Omega) } 
\\
& \quad + \Vert  \overline{\varrho} \Vert_{\mathbb{W}^{0, 0}_{\infty, \infty} (I\times \Omega) } \Vert \overline{\bv} - \overline{\bv}_0\Vert_{\mathbb{W}^{0, 0}_{\infty, \infty} (I\times \Omega) }  \Vert \partial_t \nabla\overline{\bv} \Vert_{\mathbb{W}^{0, 0}_{2, 2} (I\times \Omega) }  \left\Vert \mathbf{B}_{\eta}\right \Vert_{\mathbb{W}^{0, 0}_{\infty, \infty} (I\times \Omega) }
\\
& \quad + \Vert  \overline{\varrho} \Vert_{\mathbb{W}^{0, 0}_{\infty, \infty} (I\times \Omega) } \Vert \overline{\bv}_0\Vert_{L^{\infty}(\Omega)  }  \Vert \partial_t \nabla\overline{\bv} \Vert_{\mathbb{W}^{0, 0}_{2, 2} (I\times \Omega) }  \left\Vert \mathbf{B}_{\eta} \right \Vert_{\mathbb{W}^{0, 0}_{\infty, \infty} (I\times \Omega) }
\\
& \quad + \Vert   \overline{\varrho} \Vert_{\mathbb{W}^{0, 0}_{\infty, \infty} (I\times \Omega) } \Vert \overline{\bv} - \overline{\bv}_0\Vert_{\mathbb{W}^{0, 0}_{\infty, \infty} (I\times \Omega) }  \Vert  \nabla\overline{\bv} \Vert_{\mathbb{W}^{0, 0}_{2, 2} (I\times \Omega) } \left\Vert \partial_t \mathbf{B}_\eta \right \Vert_{\mathbb{W}^{0, 0}_{\infty, \infty} (I\times \Omega) } 
\\
& \quad +  \Vert   \overline{\varrho} \Vert_{\mathbb{W}^{0, 0}_{\infty, \infty} (I\times \Omega) } \Vert  \overline{\bv}_0\Vert_{L^{\infty}(\Omega)  }  \Vert  \nabla\overline{\bv} \Vert_{\mathbb{W}^{0, 0}_{2, 2} (I\times \Omega) } \left\Vert \partial_t \mathbf{B}_\eta \right \Vert_{\mathbb{W}^{0, 0}_{\infty, \infty} (I\times \Omega) } 
\\
& \leq CT^{1/2}  + c\bigg( \Vert \left( \overline{\bv}_0, \eta_0, \eta_{*}  \right) \Vert_{\mathcal{I} }^{2} +  \sup_{t\in I}\left( \Vert \overline{\varrho}^{\gamma}(t)\Vert_{W^{3,2}(\Omega )}^2 + \Vert \partial_t\overline{\varrho}^{\gamma}(t)\Vert_{W^{2,2}(\Omega )}^2 \right) \bigg).
\end{align*}\\
Consequently, we have 
\begin{equation}
\Vert \partial_t\mathbf{h}_\eta \Vert_{\mathbb{W}^{0, 0}_{2,2} (I\times \Omega)  } \leq CT^{1/2} + \dfrac{R}{8}. 
\end{equation}\\
We now turn to the estimate for  $ \mathbf{H}_\eta, $ for which we obtain

\begin{align*}
\Vert \mathbf{H}_\eta \Vert_{\mathbb{W}^{0, 3}_{2,2} (I\times \Omega) } &\leq \mu \left \Vert \mathbf{A}_{\eta_0} - \mathbf{A}_{\eta}  \right \Vert_{\mathbb{W}^{0, 3}_{\infty, 2} (I\times \Omega) } \Vert \nabla\overline{\bv} \Vert_{\mathbb{W}^{0, 3}_{2,2} (I\times \Omega) } 
\\
&\quad + (\lambda + \mu) \left\Vert \dfrac{1}{J_{\eta_0}} -  \dfrac{1}{J_{\eta}}  \right\Vert_{\mathbb{W}^{0, 3}_{\infty, 2} (I\times \Omega) }  \Vert \nabla\overline{\bv} \Vert_{\mathbb{W}^{0, 3}_{2,2} (I\times \Omega) }  \left \Vert \mathbf{B}_{\eta_0} \right\Vert_{ W^{3,2}(\Omega) }^{2}  
\\
& \quad + (\lambda + \mu) \left\Vert \dfrac{1}{J_{\eta}}  \right\Vert_{\mathbb{W}^{0, 3}_{\infty,2} (I\times \Omega) }  \Vert \nabla\overline{\bv} \Vert_{\mathbb{W}^{0, 3}_{2,2} (I\times \Omega) }  \left \Vert \mathbf{B}_{\eta_0} - \mathbf{B}_{\eta} \right \Vert_{\mathbb{W}^{0, 3}_{\infty, 2} (I\times \Omega) }  \Vert \mathbf{B}_{\eta_0} \Vert_{ W^{3,2}(\Omega) }
\\
& \quad + (\lambda + \mu) \left\Vert \dfrac{1}{J_{\eta}}  \right\Vert_{\mathbb{W}^{0, 3}_{\infty, 2} (I\times \Omega) }  \Vert \nabla\overline{\bv} \Vert_{\mathbb{W}^{0, 3}_{2,2} (I\times \Omega) } \left \Vert \mathbf{B}_{\eta_0} - \mathbf{B}_{\eta} \right\Vert_{\mathbb{W}^{0, 3}_{\infty,2} (I\times \Omega) }  \Vert \mathbf{B}_{\eta} \Vert_{\mathbb{W}^{0, 3}_{\infty, 2} (I\times \Omega) }
\\
& \quad + a T^{1/2}  \left \Vert \mathbf{B}_{\eta} - \mathbf{B}_{\eta_0}\right \Vert_{\mathbb{W}^{0, 3}_{\infty, 2} (I\times \Omega) } \Vert  \overline{\varrho}^{\gamma} \Vert_{\mathbb{W}^{0, 3}_{\infty, 2} (I\times \Omega) } 
\\
& \leq CT^{1/2}.  
\end{align*}\\
As a result,

\begin{equation}
\Vert \mathbf{H}_\eta \Vert_{\mathbb{W}^{0, 3}_{2,2} (I\times \Omega) }  \leq CT^{1/2}. 
\end{equation} \\
Lastly, we derive the estimate for $ \partial_t \mathbf{H}_\eta, $ which satisfies

\begin{align*}
\Vert \partial_t \mathbf{H}_\eta \Vert_{\mathbb{W}^{0, 1}_{2,2} (I\times \Omega) }  &\leq \mu  \Vert \mathbf{A}_{\eta_0} - \mathbf{A}_{\eta}   \Vert_{\mathbb{W}^{0, 1}_{\infty, \infty} (I\times \Omega) } \Vert \partial_t \nabla\overline{\bv} \Vert_{\mathbb{W}^{0, 1}_{2,2} (I\times \Omega) } 
\\
&\quad + \mu \Vert \partial_t \mathbf{A}_\eta \Vert_{\mathbb{W}^{0, 1}_{\infty, \infty} (I\times \Omega) }  \Vert \nabla\overline{\bv} \Vert_{\mathbb{W}^{0, 1}_{2,2} (I\times \Omega) }
\\
& \quad + (\lambda + \mu) \left\Vert \dfrac{1}{J_{\eta_0} } \right\Vert_{W^{1, \infty}(\Omega) }  \Vert \mathbf{B}_{\eta_0} \Vert_{ W^{1,\infty}(\Omega) }^{2} \Vert \partial_t \nabla\overline{\bv} \Vert_{\mathbb{W}^{0, 1}_{2,2} (I\times \Omega) }  
\\
& \quad +  (\lambda + \mu) \left\Vert  \dfrac{1}{J_{\eta} } \right\Vert_{\mathbb{W}^{0, 1}_{\infty, \infty} (I\times \Omega)  } \Vert \mathbf{B}_{\eta} \Vert_{\mathbb{W}^{0, 1}_{\infty, \infty} (I\times \Omega) }^2 \Vert \partial_t \nabla\overline{\bv} \Vert_{\mathbb{W}^{0, 1}_{2,2} (I\times \Omega) } 
\\
& \quad + (\lambda + \mu) \left\Vert \partial_t \left( \dfrac{1}{J_{\eta} } \right)  \right\Vert_{\mathbb{W}^{0, 1}_{\infty, \infty} (I\times \Omega) }   \Vert \mathbf{B}_{\eta} \Vert_{\mathbb{W}^{0, 1}_{\infty, \infty} (I\times \Omega) }^{2}   \Vert \nabla\overline{\bv} \Vert_{\mathbb{W}^{0, 1}_{2,2} (I\times \Omega) }
\\
& \quad + 2 (\lambda + \mu)  \left\Vert \dfrac{1}{J_{\eta} } \right\Vert_{\mathbb{W}^{0, 1}_{\infty, \infty} (I\times \Omega) } \Vert \partial_t \mathbf{B}_{\eta} \Vert_{\mathbb{W}^{0, 1}_{\infty, \infty} (I\times \Omega) }\Vert \mathbf{B}_{\eta} \Vert_{\mathbb{W}^{0, 1}_{\infty, \infty} (I\times \Omega) }  \Vert \nabla\overline{\bv} \Vert_{\mathbb{W}^{0, 1}_{2,2} (I\times \Omega) }
\\
&\quad + a T^{1/2} \Vert \partial_t \mathbf{B}_{\eta} \Vert_{\mathbb{W}^{0, 1}_{\infty, \infty} (I\times \Omega) }  \Vert  \overline{\varrho}^{\gamma} \Vert_{\mathbb{W}^{0, 1}_{\infty, 2} (I\times \Omega) } 
\\
& \quad +  a T^{1/2} \Vert \mathbf{B}_{\eta} - \mathbf{B}_{\eta_0} \Vert_{\mathbb{W}^{0, 0}_{\infty, \infty} (I\times \Omega) }  \Vert  \partial_t \left( \overline{\varrho}^{\gamma}\right) \Vert_{\mathbb{W}^{0, 1}_{\infty, 2} (I\times \Omega) }
\\
& \leq CT^{1/2} +  c\bigg( \Vert \left( \overline{\bv}_0, \eta_0, \eta_{*}  \right) \Vert_{\mathcal{I} }^{2} + \sup_{t\in I}\left( \Vert \overline{\varrho}^{\gamma}(t)\Vert_{W^{3,2}(\Omega )}^2 + \Vert \partial_t\overline{\varrho}^{\gamma}(t)\Vert_{W^{2,2}(\Omega )}^2 \right)  \bigg).
\end{align*}\\
Hence,

\begin{equation}
\Vert \partial_t \mathbf{H}_\eta \Vert_{\mathbb{W}^{0, 1}_{2,2} (I\times \Omega) } \leq CT^{1/2} + \dfrac{R}{8}.
\end{equation}\\
Combining all the estimates above, we deduce that 
\begin{equation}
\Vert (\mathbf{h}_\eta, \mathbf{H}_\eta) \Vert_{\mathcal{S}}  = \left\Vert \mathcal{F}( \mathbf{h}, \mathbf{H}) \right\Vert_{\mathcal{S}} \leq CT^{1/2} +  \dfrac{R}{ 2}.
\end{equation} \\
Accordingly,  for a suitably small choice of $T > 0, $  we have 
\[ \mathcal{F}\left( \mathcal{K}_{T, R} \right) \subset  \mathcal{K}_{T, R},  \]
whence  $ \mathcal{K}_{T, R} $ is $\mathcal{F}-$invariant. \\

\medskip

\noindent\textbf{Step 2: }  $\mathcal{F} $ is a strict contraction. \\
Let  $\left( \overline{\bv}_i, \eta_i    \right),  \; i \in \{1, 2\}, $ be respectively the solution of the linearised system  \eqref{linMomAlone}-\eqref{linShelAlone},   with source terms $ \left( \mathbf{h}_i,  \mathbf{H}_i \right)  \in \mathcal{K}_{T, R}, $ and initial condition $\left( \overline{\bv}_0, \eta_0, \eta_*  \right).$ 
We introduce the following variables:
\[\left( \overline{\mathbf{h}}, \overline{\mathbf{H}}   \right) := \left( \mathbf{h}_1,  \mathbf{H}_1 \right) - \left( \mathbf{h}_2,  \mathbf{H}_2 \right), \quad  \left( \overline{ \overline{\bv}}, \overline{ \overline{\eta}}   \right) :=  \left( \overline{\bv}_1, \eta_1    \right)  - \left(  \overline{\bv}_2, \eta_2    \right).   \]

Then due to the linear nature of the system \eqref{linMomAlone}-\eqref{linShelAlone}, $ \left( \overline{ \overline{\bv}}, \overline{ \overline{\eta}}   \right) $ solves 
\eqref{linMomAlone}-\eqref{linShelAlone} with source term $\left( \overline{\mathbf{h}}, \overline{\mathbf{H}}   \right), $ and initial condition $(0,0,0).$
Thus, by Proposition \ref{thm:transformedSystem}, it follows that 
\begin{equation}\label{eq:FpDiffnormSol}
\Vert \left( \overline{ \overline{\bv}}, \overline{ \overline{\eta}}   \right) \Vert_{ \raisebox{-1.5ex}{$ \mathcal{X}$}_{\overline\bv}  \raisebox{-1.5ex}{$\times \mathcal{X}$}_{\eta} } \leq C \Vert \left( \overline{\mathbf{h}}, \overline{\mathbf{H}}   \right) \Vert_{\mathcal{S}}.
\end{equation}\\
Moreover, we have that
\begin{equation*}
\begin{aligned}
\Vert J_{\eta_1} - J_{\eta_2} \Vert_{\mathbb{W}^{0, 3}_{\infty, 2} (I\times \Omega) } &\leq \overline{C} \Vert \nabla\bfPsi_{\eta_1} - \nabla\bfPsi_{\eta_2}\Vert_{\mathbb{W}^{0, 3}_{\infty, 2} (I\times \Omega) }
\\
&\leq \overline{C} \Vert \bfPsi_{\eta_1} - \bfPsi_{\eta_2}\Vert_{\mathbb{W}^{0, 4}_{\infty, 2} (I\times \Omega) }
\\
&\leq \overline{C} \Vert \eta_1 - \eta_2 \Vert_{\mathbb{W}^{0, 4}_{\infty, 2} (I\times \omega) }
\\
&\leq \overline{C}T^{1/2}\Vert \partial_t \left( \eta_1 - \eta_2 \right)  \Vert_{\mathbb{W}^{0, 4}_{2, 2} (I\times \omega) }, 
\end{aligned}
\end{equation*}\\
whence 

\begin{equation}\label{eq:EstimJeta}
\Vert J_{\eta_1} - J_{\eta_2} \Vert_{\mathbb{W}^{0, 3}_{\infty, 2} (I\times \Omega) } \leq  CT^{1/2} \Vert \left( \overline{\mathbf{h}}, \overline{\mathbf{H}}   \right) \Vert_{\mathcal{S}}.
\end{equation}\\
Similarly, one has 

\begin{equation*}
\begin{aligned}
\Vert \mathbf{B}_{\eta_1} - \mathbf{B}_{\eta_2} \Vert_{\mathbb{W}^{0, 3}_{\infty, 2} (I\times \Omega) } &\leq \Vert J_{\eta_1}\Vert_{\mathbb{W}^{0, 3}_{\infty, 2} (I\times \Omega) } \Vert \nabla\bfPsi_{\eta_1}^{-1}\circ \bfPsi_{\eta_1} - \nabla\bfPsi_{\eta_2}^{-1}\circ \bfPsi_{\eta_2} \Vert_{\mathbb{W}^{0, 3}_{\infty, 2} (I\times \Omega) } 
\\
&\quad   + \Vert J_{\eta_1} - J_{\eta_2}\Vert_{\mathbb{W}^{0, 3}_{\infty, 2} (I\times \Omega) } \Vert \nabla\bfPsi_{\eta_2}^{-1}\circ \bfPsi_{\eta_2} \Vert_{\mathbb{W}^{0, 3}_{\infty, 2} (I\times \Omega) } 
\\
&\leq C_1 \Vert J_{\eta_1}\Vert_{\mathbb{W}^{0, 3}_{\infty, 2} (I\times \Omega) } \Vert \eta_1 - \eta_2 \Vert_{\mathbb{W}^{0, 4}_{\infty, 2} (I\times \omega) } 
\\
&\quad + C_2 \Vert J_{\eta_1} - J_{\eta_2}\Vert_{\mathbb{W}^{0, 3}_{\infty, 2} (I\times \Omega) } \left( 1 + \Vert \eta_2 \Vert_{\mathbb{W}^{0, 4}_{\infty, 2} (I\times \omega) }   \right). 
\end{aligned}
\end{equation*}\\
Thus, 

\begin{equation}\label{eq:EstimBeta}
\Vert  \mathbf{B}_{\eta_1} -  \mathbf{B}_{\eta_2} \Vert_{\mathbb{W}^{0, 3}_{\infty, 2} (I\times \Omega) } \leq  CT^{1/2} \Vert \left( \overline{g}, \overline{\mathbf{h}}, \overline{\mathbf{H}}   \right) \Vert_{\mathcal{S}}.
\end{equation}\\

\begin{equation*}
\begin{aligned}
\Vert \mathbf{A}_{\eta_1} - \mathbf{A}_{\eta_2} \Vert_{\mathbb{W}^{0, 3}_{\infty, 2} (I\times \Omega) } &\leq \Vert \nabla\bfPsi_{\eta_1}^{-1} \circ \bfPsi_{\eta_1} \Vert_{\mathbb{W}^{0, 3}_{\infty, 2} (I\times \Omega) } \Vert  \mathbf{B}_{\eta_1} -  \mathbf{B}_{\eta_2} \Vert_{\mathbb{W}^{0, 3}_{\infty, 2} (I\times \Omega) } 
\\
&\quad + \left\Vert \mathbf{B}_{\eta_2} \right\Vert_{\mathbb{W}^{0, 3}_{\infty, 2} (I\times \Omega) }  \Vert \nabla\bfPsi_{\eta_1}^{-1}\circ \bfPsi_{\eta_1} - \nabla\bfPsi_{\eta_2}^{-1} \circ \bfPsi_{\eta_2} \Vert_{\mathbb{W}^{0, 3}_{\infty, 2} (I\times \Omega) }, 
\end{aligned}
\end{equation*}\\
 whence
 
 \begin{equation}
\Vert  \mathbf{A}_{\eta_1} -  \mathbf{A}_{\eta_2} \Vert_{\mathbb{W}^{0, 3}_{\infty, 2} (I\times \Omega) } \leq  CT^{1/2} \Vert \left( \overline{\mathbf{h}}, \overline{\mathbf{H}}   \right) \Vert_{\mathcal{S}}.
\end{equation}\\
 Therefore, we deduce that 
 
 \begin{equation*}
 \begin{aligned}
 \Vert \mathbf{h}_{\eta_1} - \mathbf{h}_{\eta_2} \Vert_{\mathbb{W}^{0, 2}_{2, 2} (I\times \Omega) } &\leq   \Vert J_{\eta_0} - J_{\eta_2} \Vert_{\mathbb{W}^{0, 2}_{\infty,  2} (I\times \Omega) }  \Vert \overline{\varrho}_0  \Vert_{ W^{2,2}(\Omega)  }\Vert \partial_t \left( \overline{\bv}_1 - \overline{\bv}_2 \right) \Vert_{\mathbb{W}^{0, 2}_{2, 2} (I\times \Omega) } 
 \\
 & \quad +  \Vert J_{\eta_0} \Vert_{W^{2,2}(\Omega)}  \Vert \overline{\varrho}_0 - \overline{\varrho} \Vert_{\mathbb{W}^{0, 2}_{\infty, 2} (I\times \Omega) } \Vert \partial_t \left( \overline{\bv}_1 - \overline{\bv}_2 \right) \Vert_{\mathbb{W}^{0, 2}_{2, 2} (I\times \Omega) } 
 \\
 & \quad + \Vert J_{\eta_2} - J_{\eta_1} \Vert_{\mathbb{W}^{0, 2}_{\infty, 2} (I\times \Omega) }  \Vert  \overline{\varrho}  \Vert_{\mathbb{W}^{0, 2}_{\infty, 2} (I\times \Omega) }  \Vert \partial_t \overline{\bv}_1 \Vert_{\mathbb{W}^{0, 2}_{2, 2} (I\times \Omega) } 
 \\
 & \quad + \Vert J_{\eta_2} - J_{\eta_1} \Vert_{\mathbb{W}^{0, 2}_{\infty, 2} (I\times \Omega)  }  \Vert \overline{\varrho} \Vert_{\mathbb{W}^{0, 2}_{\infty, 2} (I\times \Omega) }  \Vert  \nabla\overline{\bv}_1 \Vert_{\mathbb{W}^{0, 2}_{2, 2} (I\times \Omega) } \Vert \partial_t \bfPsi_{\eta_1}^{-1}\circ \bfPsi_{\eta_1} \Vert_{\mathbb{W}^{0, 2}_{\infty, 2} (I\times \Omega) } 
 \\
 & \quad +  \Vert J_{\eta_2} \overline{\varrho} \Vert_{\mathbb{W}^{0, 2}_{\infty, 2} (I\times \Omega) }  \Vert  \nabla ( \overline{\bv}_2 - \overline{\bv}_1 )\Vert_{\mathbb{W}^{0, 2}_{2, 2} (I\times \Omega) } \Vert \partial_t \bfPsi_{\eta_1}^{-1}\circ \bfPsi_{\eta_1} \Vert_{\mathbb{W}^{0, 2}_{\infty, 2} (I\times \Omega) }
 \\
 & \quad +  \Vert J_{\eta_2} \overline{\varrho} \Vert_{\mathbb{W}^{0, 2}_{\infty, 2} (I\times \Omega) }  \Vert  \nabla \overline{\bv}_2 \Vert_{\mathbb{W}^{0, 2}_{2, 2} (I\times \Omega) } \Vert \partial_t \bfPsi_{\eta_2}^{-1}\circ \bfPsi_{\eta_2} - \partial_t \bfPsi_{\eta_1}^{-1}\circ \bfPsi_{\eta_1} \Vert_{\mathbb{W}^{0, 2}_{\infty, 2} (I\times \Omega) }
 \\
 & \quad +  \Vert  \overline{\varrho} \Vert_{\mathbb{W}^{0, 2}_{\infty, 2} (I\times \Omega) }  \Vert   \overline{\bv}_2 - \overline{\bv}_1 \Vert_{\mathbb{W}^{0, 2}_{\infty, 2} (I\times \Omega) } \Vert \nabla\overline{\bv}_1 \Vert_{\mathbb{W}^{0, 2}_{2, 2} (I\times \Omega) } \Vert \mathbf{B}_{\eta_1}\Vert_{\mathbb{W}^{0, 2}_{\infty, 2} (I\times \Omega) }
 \\
 & \quad + \Vert  \overline{\varrho} \Vert_{\mathbb{W}^{0, 2}_{\infty, 2} (I\times \Omega) }  \Vert   \overline{\bv}_2 - \overline{\bv}_0 \Vert_{\mathbb{W}^{0, 2}_{\infty, 2} (I\times \Omega) } \Vert \nabla(\overline{\bv}_2 - \overline{\bv}_1) \Vert_{\mathbb{W}^{0, 2}_{2, 2} (I\times \Omega) } \Vert \mathbf{B}_{\eta_1} \Vert_{\mathbb{W}^{0, 2}_{\infty, 2} (I\times \Omega) }
 \\
 & \quad +  \Vert  \overline{\varrho} \Vert_{\mathbb{W}^{0, 2}_{\infty, 2} (I\times \Omega) }  \Vert  \overline{\bv}_0 \Vert_{W^{2,2}(\Omega) } \Vert \nabla(\overline{\bv}_2 - \overline{\bv}_1) \Vert_{\mathbb{W}^{0, 2}_{2, 2} (I\times \Omega) } \Vert \mathbf{B}_{\eta_1}\Vert_{\mathbb{W}^{0, 2}_{\infty, 2} (I\times \Omega) }
 \\
 & \quad +  \Vert  \overline{\varrho} \Vert_{\mathbb{W}^{0, 2}_{\infty, 2} (I\times \Omega) }   \Vert   \overline{\bv}_2 - \overline{\bv}_0 \Vert_{\mathbb{W}^{0, 2}_{\infty, 2} (I\times \Omega) }  \Vert \nabla\overline{\bv}_2  \Vert_{\mathbb{W}^{0, 2}_{2, 2} (I\times \Omega) } \left\Vert  \mathbf{B}_{\eta_2} - \mathbf{B}_{\eta_1}\right \Vert_{\mathbb{W}^{0, 2}_{\infty, 2} (I\times \Omega) }
 \\
 & \quad + \Vert  \overline{\varrho} \Vert_{\mathbb{W}^{0, 2}_{\infty, 2} (I\times \Omega) }   \Vert  \overline{\bv}_0 \Vert_{ W^{2,2}(\Omega) }  \Vert \nabla\overline{\bv}_2  \Vert_{\mathbb{W}^{0, 2}_{2, 2} (I\times \Omega) } \left\Vert \mathbf{B}_{\eta_2} - \mathbf{B}_{\eta_1}\right \Vert_{\mathbb{W}^{0, 2}_{\infty, 2} (I\times \Omega) },
 \end{aligned}
 \end{equation*}\\
whence

 \begin{equation}\label{eq:hdiffEstimate}
\Vert \mathbf{h}_{\eta_1} - \mathbf{h}_{\eta_2} \Vert_{\mathbb{W}^{0, 2}_{2, 2} (I\times \Omega) }  \leq  CT^{1/4} \Vert \left( \overline{\mathbf{h}}, \overline{\mathbf{H}}   \right) \Vert_{\mathcal{S}}.
\end{equation}\\ 
Furthermore, we have 

\begin{align*}
\Vert \mathbf{H}_{\eta_1} - \mathbf{H}_{\eta_2} \Vert_{\mathbb{W}^{0, 3}_{2, 2} (I\times \Omega) } &\leq \mu \left \Vert \mathbf{A}_{\eta_2} - \mathbf{A}_{\eta_1}  \right \Vert_{\mathbb{W}^{0, 3}_{\infty, 2} (I\times \Omega) } \Vert \nabla\overline{\bv}_1 \Vert_{\mathbb{W}^{0, 3}_{2, 2} (I\times \Omega) } 
\\
& \quad +  \mu \left \Vert \mathbf{A}_{\eta_0} - \mathbf{A}_{\eta_2}  \right \Vert_{\mathbb{W}^{0, 3}_{\infty, 2} (I\times \Omega) } \Vert \nabla(\overline{\bv}_1 - \overline{\bv}_2) \Vert_{\mathbb{W}^{0, 3}_{2, 2} (I\times \Omega) } 
\\
& \quad + (\lambda + \mu) \left\Vert \dfrac{1}{J_{\eta_0}}  \right\Vert_{ W^{3,2}(\Omega) }  \left \Vert \mathbf{B}_{\eta_0} \right\Vert_{ W^{3,2}(\Omega) }^{2}  \Vert \nabla(\overline{\bv}_1 - \overline{\bv}_2) \Vert_{\mathbb{W}^{0, 3}_{2, 2} (I\times \Omega) } 
\\
& \quad +  (\lambda + \mu) \left\Vert \dfrac{1}{J_{\eta_2} } -  \dfrac{1}{J_{\eta_1} }  \right\Vert_{\mathbb{W}^{0, 3}_{\infty, 2} (I\times \Omega)  } \left \Vert \mathbf{B}_{\eta_1} \right\Vert_{\mathbb{W}^{0, 3}_{\infty, 2} (I\times \Omega) }^{2}  \Vert \nabla \overline{\bv}_1 \Vert_{\mathbb{W}^{0, 3}_{2, 2} (I\times \Omega) } 
\\
& \quad + (\lambda + \mu) \left\Vert \dfrac{1}{J_{\eta_2}}  \right\Vert_{\mathbb{W}^{0, 3}_{\infty, 2} (I\times \Omega) } \!\!\!  \left \Vert \mathbf{B}_{\eta_2} - \mathbf{B}_{\eta_1} \right \Vert_{\mathbb{W}^{0, 3}_{\infty, 2} (I\times \Omega) }   \Vert \nabla\overline{\bv}_1 \Vert_{\mathbb{W}^{0, 3}_{2, 2} (I\times \Omega) }    \Vert \mathbf{B}_{\eta_1} \Vert_{\mathbb{W}^{0, 3}_{\infty, 2} (I\times \Omega) }
\\
& \quad + (\lambda + \mu) \left\Vert \dfrac{1}{J_{\eta_2}}  \right\Vert_{\mathbb{W}^{0, 3}_{\infty, 2} (I\times \Omega) }  \left \Vert \mathbf{B}_{\eta_2} \right \Vert_{\mathbb{W}^{0, 3}_{\infty, 2} (I\times \Omega) }^2   \Vert \nabla (\overline{\bv}_2 - \overline{\bv}_1) \Vert_{\mathbb{W}^{0, 3}_{2, 2} (I\times \Omega) }  
\\
& \quad +  (\lambda + \mu) \left\Vert \dfrac{1}{J_{\eta_2}}  \right\Vert_{\mathbb{W}^{0, 3}_{\infty, 2} (I\times \Omega) } \!\!\! \left \Vert \mathbf{B}_{\eta_2} - \mathbf{B}_{\eta_1} \right \Vert_{\mathbb{W}^{0, 3}_{\infty, 2} (I\times \Omega) }   \Vert \nabla\overline{\bv}_1 \Vert_{\mathbb{W}^{0, 3}_{2, 2} (I\times \Omega) }    \Vert \mathbf{B}_{\eta_2} \Vert_{\mathbb{W}^{0, 3}_{\infty, 2} (I\times \Omega) }
\\
& \quad + a T^{1/2}  \left \Vert \mathbf{B}_{\eta_1} - \mathbf{B}_{\eta_2}\right \Vert_{\mathbb{W}^{0, 3}_{\infty, 2} (I\times \Omega) } \Vert  \overline{\varrho}^{\gamma} \Vert_{\mathbb{W}^{0, 3}_{\infty, 2} (I\times \Omega) }, 
\end{align*}\\
whence 

\begin{equation}
\Vert  \mathbf{H}_{\eta_1} - \mathbf{H}_{\eta_2}   \Vert_{\mathbb{W}^{0, 3}_{2, 2} (I\times \Omega) }  \leq  CT^{1/4} \Vert \left( \overline{\mathbf{h}}, \overline{\mathbf{H}}   \right) \Vert_{\mathcal{S}}.
\end{equation}\\  
In a similar fashion, we obtain the estimates 

 \begin{equation}
\Vert \partial_t \left( \mathbf{h}_{\eta_1} - \mathbf{h}_{\eta_2}  \right)  \Vert_{\mathbb{W}^{0, 0}_{2, 2} (I\times \Omega) }  \leq  CT^{1/4} \Vert \left(\overline{\mathbf{h}}, \overline{\mathbf{H}}   \right) \Vert_{\mathcal{S}},
\end{equation}\\ 
and

\begin{equation}\label{eq:HdiffEstimate}
\Vert  \partial_t \left( \mathbf{H}_{\eta_1} - \mathbf{H}_{\eta_2} \right)   \Vert_{\mathbb{W}^{0, 1}_{2, 2} (I\times \Omega) }  \leq  CT^{1/4} \Vert \left( \overline{\mathbf{h}}, \overline{\mathbf{H}}   \right) \Vert_{\mathcal{S}}.
\end{equation}\\ 
Thus, 

\begin{equation}
\Vert  \mathcal{F}\left( \mathbf{h}_1,  \mathbf{H}_1 \right) -  \mathcal{F}\left( \mathbf{h}_2,  \mathbf{H}_2 \right)   \Vert_{\mathcal{S}} \leq  CT^{1/4} \Vert \left( \overline{\mathbf{h}}, \overline{\mathbf{H}}   \right) \Vert_{\mathcal{S}}.
\end{equation}\\ 
Hence, by choosing $T >0 $ sufficiently small, $\mathcal{F} $ is a strict contraction.  \\

\end{proof} 
 \noindent  This concludes the proof of Theorem \ref{thm:transformedSystem1}.

 
 \section{Local Strong Solutions}\label{sec:localstrongfixedpoint}
 We now establish the existence of a local-in-time strong solution of the fully coupled system \eqref{eq:ShellEq}-\eqref{eq:ContMomentEq}. Our approach relies on combining the subproblems into a single fixed-point framework. For notational consistency, we continue to work on the time interval $ I = (0, T) $  -- with $ T> 0 $ implicitly restricted to ensure that all subsequent estimates remain valid -- and introduce the spaces 
\begin{equation*}
\begin{aligned}
  \mathcal{X}_{\overline{\rho}} &:=   \mathbb{W}^{0, 3}_{\infty,2} \left(I\times \Omega\right)\cap \mathbb{W}^{1, 2}_{\infty,2} \left(I\times \Omega\right), &
\end{aligned}
\end{equation*}
endowed with the norm
\begin{equation*}
\begin{aligned}
 \Vert \overline{\rho}\Vert_{ \raisebox{-1.5ex}{$\mathcal{X}$}_{\overline{\rho}} } &:=  \sup_{t\in I}\Big( \Vert \overline{\rho}(t)\Vert_{W^{3,2}(\Omega)}
+
\Vert \partial_t\overline{\rho}(t)\Vert_{W^{2,2}(\Omega )} 
\Big). &
\end{aligned}
\end{equation*}
 \noindent For $ \overline{\rho} \in  \mathcal{X}_{\overline\rho},  $ let  $(\overline\bv, \eta) $ be the unique strong solution of the momentum-structure subproblem corresponding to the system \eqref{momEqAloneBar}--\eqref{shellEqAloneBar}, with data $ \left( \overline{\rho}, \overline{\bv}_0, \eta_0, \eta_*\right)$. Given such a pair $(\overline\bv, \eta), $ we define $\overline{\rho}^{\#}$ as the unique strong solution of \eqref{rhoEquAloneTransform}--\eqref{initialCondSolvSubProTransform} with dataset $\left(\overline{\rho}_0, \overline\bv, \eta  \right)$. \\
This induces the mapping $\mathbf{F} = \mathbf{F}_1 \circ \mathbf{F}_2 $ where 
\[
\mathbf{F}(\overline{\rho}) = \overline{\rho}^{\#}, \qquad 
\mathbf{F}_2(\overline{\rho}) = (\overline\bv, \eta) , \quad \text{and} \;\;\;  \mathbf{F}_1(\overline\bv, \eta) = \overline{\rho}^{\#}. 
\]
 We shall consider $\mathbf{F} $ on the closed ball 
 \[\mathcal{B}_R := \bigg\{ \overline{\rho} \in \mathcal{X}_{\overline\rho} \colon  \Vert \overline{\rho}\Vert_{ \raisebox{-1.5ex}{$\mathcal{X}$}_{\overline\rho} } \leq R   \bigg\},  \]
 for $R > 0 $ chosen sufficiently large so that, for every $\overline\rho \in \mathcal{B}_R $, the  source terms determined by $\overline\rho $ in the linearised momentum-structure subproblem remain within the functional class $\mathcal{K}_{T, R} $  for which the fixed-point argument applies (cf. Section~\ref{subsec:BanachFixed-PointSub}).  \\
 The remaining part of this section is devoted to showing that, for a sufficiently small time $T > 0 $, the map  $\mathbf{F} \colon  \mathcal{X}_{\overline\rho} \to  \mathcal{X}_{\overline\rho} $ maps $ \mathcal{B}_R $ into itself and satisfies a strict contraction property. This yields the existence of a unique fixed point of  $\mathbf{F} $, and thereby of a strong solution to the fully coupled system \eqref{eq:ShellEq}--\eqref{interfaceCond}.\\

\medskip

\noindent\textbf{Step 1: }  $\mathbf{F}\big( \mathcal{B}_R  \big) \subset \mathcal{B}_R  $. \\
Let $\overline\rho \in \mathcal{B}_R $,  then by the a priori estimate \eqref{eq:ContSubProbEstimate},  

\begin{equation}
 \begin{aligned} \label{eq:lrhoHatEstimate}
\sup_{t\in I} &\Big( \Vert \overline{\rho}^{\#}(t)\Vert_{W^{3,2}(\Omega )}^2 
+
\Vert \partial_t\overline{\rho}^{\#}(t)\Vert_{W^{2,2}(\Omega )}^2 
\Big)
\\&\lesssim
 \Vert  \overline{\rho}_0\Vert_{W^{3,2}(\Omega)}^2  
\Bigg(1 + \sup\limits_I \Vert \partial_t \eta \Vert_{W^{3,2}(\omega)}^2  + \int_I\Vert  \overline\bv
\Vert_{W^{4,2}(\Omega )}^2 \dt
\\
&\quad + 
\int_I\Vert  \partial_{t}^2 \overline\bv
\Vert_{L^{2}(\Omega)}^2 \dt
\Bigg)
  \exp{\bigg( c\int_I \big(\Vert \partial_t \eta \Vert_{W^{4,2}(\omega)} + 
\Vert  \overline\bv\Vert_{W^{4,2}(\Omega)}\big)  \dt \bigg)}.
\end{aligned}
\end{equation} 
\noindent Moreover, using \eqref{eq:StepOneInvarianceEstim}, it follows that  
\begin{equation}
 \begin{aligned} \label{eq:rrhoHatEstimate}
\sup_{t\in I} &\Big( \Vert \overline{\rho}^{\#}(t)\Vert_{W^{3,2}(\Omega )}^2 
+
\Vert \partial_t \overline{\rho}^{\#}(t)\Vert_{W^{2,2}(\Omega)}^2 
\Big)
\\&\leq
 C\Vert  \overline{\rho}_0\Vert_{W^{3,2}(\Omega)}^2 e^{CT} 
\bigg(\Vert \left(  \overline{\bv}_0, \eta_0, \eta_{*}  \right) \Vert_{\mathcal{I} }^{2}  +  T\sup_{t\in I}\left( \Vert \overline{\rho}\,^{\gamma}(t)\Vert_{W^{3,2}(\Omega )}^2 + \Vert \partial_t\overline{\rho}\,^{\gamma}(t)\Vert_{W^{2,2}(\Omega )}^2 \right) 
\bigg),
\end{aligned}
\end{equation}  
where the constant $C > 0 $ depends on the tubular neighbourhood radius $L > 0$, and linearly on $\mu $ and $\lambda$.  \\
Hence,  up to increasing the radius  $R > 0 $,  we obtain -- combining \eqref{eq:rrhoHatEstimate} and \eqref{eq:pressureEstimate} -- that for  $T> 0 $ small enough,  $\Vert \overline{\rho}^{\#}\Vert_{ \raisebox{-1.5ex}{$\mathcal{X}$}_{\overline\rho} } \leq R $. \\

\medskip

\noindent\textbf{Step 2: }  $\mathbf{F} $ is a strict contraction. \\
For each $i \in \{ 1, 2\}$, let $\overline{\rho}^{\#}_i $ denote the unique strong solution of the continuity equation \eqref{rhoEquAloneTransform}--\eqref{initialCondSolvSubProTransform} associated with the dataset $(\overline{\rho}_0, \overline{\bv}_i, \eta_i)$, that is, $ \overline{\rho}^{\#}_i = \mathbf{F}_1(\overline{\bv}_i, \eta_i) $.  Here  $(\overline{\bv}_i, \eta_i) $ is the unique strong solution of the momentum-structure subproblem corresponding to the system \eqref{momEqAloneBar}--\eqref{shellEqAloneBar}, with dataset $ \left( \overline{\rho}_i, \overline{\bv}_0, \eta_0, \eta_*\right) $, that is, $\mathbf{F}_2(\overline{\rho}_i) = (\overline{\bv}_i, \eta_i) $.  In line with the contraction framework, we aim to bound the difference $\overline{\rho}^{\#}_{1,2} :=  \overline{\rho}^{\#}_1 - \overline{\rho}^{\#}_2 $   in the norm $ \mathcal{X}_{\overline\rho} $ in terms of $ \overline{\bv}_{1,2} := \overline{\bv}_1 - \overline{\bv}_2 $ and $\eta_{1,2} := \eta_1 - \eta_2 $ in suitable norms. \\
For this purpose, we first observe that $\overline{\rho}^{\#}_{1,2}$ solves 
\begin{equation}\label{eq:ContEqFixedComb}
\left\{\begin{aligned}
&J_{\eta_1}\partial_t \overline{\rho}^{\#}_{1,2}
= 
\overline{g}_{\eta_{1,2}}  &\text{ for all }(t,\bx)\in I\times\Omega,\\
&\overline{\rho}^{\#}_{1,2} (0,\bx)= 0 &\text{ for all } \bx\in \Omega,
\end{aligned}\right.
\end{equation}
 with source term 
  \begin{align*}
\overline{g}_{\eta_{1,2}} & =  \big(J_{\eta_2} - J_{\eta_1}\big)\partial_t\overline{\rho}^{\#}_2  -  \overline{\rho}^{\#}_{1,2} \mathbf{B}_{\eta_2} \colon \nabla\overline{\bv}_2 +  \overline{\rho}^{\#}_{1} \big(  \mathbf{B}_{\eta_2} - \mathbf{B}_{\eta_1}\big) \colon \nabla\overline{\bv}_2  - \big( \mathbf{B}_{\eta_1} \colon \nabla\overline{\bv}_{1,2}  \big) \overline{\rho}^{\#}_{1}  
\\
&\quad + \big(J_{\eta_2} - J_{\eta_1}\big)  \nabla\overline{\rho}^{\#}_2\cdot \partial_t \bfPsi_{\eta_2}^{-1}\circ \bfPsi_{\eta_2}  - J_{\eta_1} \nabla\overline{\rho}^{\#}_{1,2} \cdot \partial_t \bfPsi_{\eta_2}^{-1}\circ \bfPsi_{\eta_2} 
\\
&\quad + J_{\eta_1} \nabla\overline{\rho}^{\#}_1 \cdot \big(  \partial_t \bfPsi_{\eta_2}^{-1}\circ \bfPsi_{\eta_2} -  \partial_t \bfPsi_{\eta_1}^{-1}\circ \bfPsi_{\eta_1} \big)  - \overline{\bv}_{1,2} \mathbf{B}_{\eta_2}  \nabla\overline{\rho}^{\#}_2  + \overline{\bv}_{1} \big(  \mathbf{B}_{\eta_2} - \mathbf{B}_{\eta_1}\big) \nabla\overline{\rho}^{\#}_2 - \overline{\bv}_{1} \mathbf{B}_{\eta_1} \nabla\overline{\rho}^{\#}_{1,2}.
\end{align*}
 An estimate in $ \mathcal{X}_{\overline{\rho}} $ for  $\overline{\rho}^{\#}_{1,2} $  cannot be closed due to the presence of terms involving $\nabla\overline{\rho}^{\#}_i $ in the source term, which would require  higher regularity than available. Hence, we perform the contraction argument in the weaker but more general topological space 
 \[ \mathcal{Y}_{\overline{\rho}} :=   \mathbb{W}^{0, 2}_{\infty,2} \left(I\times \Omega\right)\cap \mathbb{W}^{1, 1}_{\infty,2} \left(I\times \Omega\right), \]
equipped with its canonical norm  $\Vert \cdot \Vert_{ \raisebox{-1.5ex}{$\mathcal{Y}$}_{\overline{\rho}} }  $.
 
 As a preliminary step, observe that analogously to \eqref{eq:EstimJeta} and \eqref{eq:EstimBeta}, the following bounds hold: 
 \begin{align}
\Vert J_{\eta_2} - J_{\eta_1} \Vert_{\mathbb{W}^{0, 1}_{\infty,2} \left(I\times \Omega\right)} 
&\leq c_1 T^{1/2} \Vert \partial_t \eta_{1,2} \Vert_{\mathbb{W}^{0, 2}_{2,2} \left(I\times \Omega\right)},  \label{eq:GeoEstim1}
\\[0.15cm]
\Vert \mathbf{B}_{\eta_2} - \mathbf{B}_{\eta_1}\Vert_{\mathbb{W}^{0, 1}_{\infty,2} \left(I\times \Omega\right) }
&\leq c_2(\eta_1, \eta_2)T^{1/2} \Vert \partial_t \eta_{1,2} \Vert_{\mathbb{W}^{0, 2}_{2,2} \left(I\times \Omega\right)},
\\[0.15cm]
\Vert \partial_t \bfPsi_{\eta_2}^{-1} \circ \bfPsi_{\eta_2} 
- \partial_t \bfPsi_{\eta_1}^{-1} \circ \bfPsi_{\eta_1} \Vert_{\mathbb{W}^{0, 1}_{\infty,2} \left(I\times \Omega\right)} 
&\leq c_3(\Omega) T^{1/2}  \Vert \partial_{t}^2 \eta_{1,2}\Vert_{\mathbb{W}^{0, 2}_{2,2} \left(I\times \Omega\right)}.  \label{eq:GeoEstim3}
\end{align}
 Applying the operator norm $\mathbb{W}^{0, 1}_{\infty,2} \left(I\times \Omega\right)$  to both sides of \eqref{eq:ContEqFixedComb}, one obtains
\begin{equation}\label{eq:TimeDerivEstim}
\begin{aligned}
\Vert \partial_{t} \overline{\rho}^{\#}_{1,2}  \Vert_{\mathbb{W}^{0, 1}_{\infty,2} \left(I\times \Omega\right) } & \leq  c \Bigg( T^{1/2} \Big(  \Vert \partial_{t}^2 \eta_{1,2} \Vert_{\mathbb{W}^{0, 2}_{2,2} \left(I\times \Omega\right) } +  \Vert \overline{\bv}_{1,2} \Vert_{\mathbb{W}^{1, 2}_{2,2} \left(I\times \Omega\right) }  \Big) + \Vert  \overline{\rho}^{\#}_{1,2}  \Vert_{\mathbb{W}^{0, 2}_{\infty,2} \left(I\times \Omega\right) } \Bigg),
\end{aligned}
\end{equation} 
with  constant $c = c\Big( \Omega, \eta_1, \eta_2,   \overline{\bv}_1,  \overline{\bv}_2, \overline{\rho}^{\#}_1, \overline{\rho}^{\#}_2  \Big)$.\\

 \noindent To estimate $ \Vert  \overline{\rho}^{\#}_{1,2}  \Vert_{\mathbb{W}^{0, 2}_{\infty,2} \left(I\times \Omega\right) }$, we recast \eqref{eq:ContEqFixedComb} as a linear transport-type system: 
 
 \begin{equation}\label{eq:LinTransportTypSyst}
\left\{\begin{aligned}
& \partial_t \overline{\rho}^{\#}_{1,2} + \overline{\mathbf{u}}_{eff}^{\#} \cdot \nabla \overline{\rho}^{\#}_{1,2}  +  \delta_{\Div}^{\#} \, \overline{\rho}^{\#}_{1,2}
= 
\overline{f}_{\eta_{1,2}}  &\text{ for all }(t,\bx)\in I\times\Omega,\\
&\overline{\rho}^{\#}_{1,2} (0,\bx)= 0 &\text{ for all } \bx\in \Omega,
\end{aligned}\right.
\end{equation}
 with  \[  \overline{\mathbf{u}}_{eff}^{\#} = \Big( \partial_t \bfPsi_{\eta_1}^{-1}\circ \bfPsi_{\eta_1} + \dfrac{1}{J_{\eta_1}}  \mathbf{B}^{\intercal}_{\eta_1}\overline{\bv}_1 \Big),\quad  \delta_{\Div}^{\#} = \dfrac{1}{J_{\eta_1}} \mathbf{B}_{\eta_1} \colon \nabla\overline{\bv}_1, \quad \text{and}   \]
 
  \begin{align*}
\overline{f}_{\eta_{1,2}}  & =  \nabla\overline{\rho}^{\#}_2 \cdot \big(  \partial_t \bfPsi_{\eta_1}^{-1}\circ \bfPsi_{\eta_1} -  \partial_t \bfPsi_{\eta_2}^{-1}\circ \bfPsi_{\eta_2} \big) +   \left(  \dfrac{1}{J_{\eta_1}} -  \dfrac{1}{J_{\eta_2}} \right)  \mathbf{B}_{\eta_1} \overline{\bv}_{1}  +  \dfrac{1}{J_{\eta_2}} \left(   \mathbf{B}_{\eta_1} -   \mathbf{B}_{\eta_2} \right) \overline{\bv}_{1}  +  \dfrac{1}{J_{\eta_2}} \mathbf{B}_{\eta_2} \overline{\bv}_{1, 2} 
\\
& \quad + \left(  \dfrac{1}{J_{\eta_1}} -  \dfrac{1}{J_{\eta_2}} \right)  \mathbf{B}_{\eta_1}\colon  \nabla\overline{\bv}_{1}  +  \dfrac{1}{J_{\eta_2}} \left(   \mathbf{B}_{\eta_1} -   \mathbf{B}_{\eta_2} \right)\colon  \nabla\overline{\bv}_{1}  +  \dfrac{1}{J_{\eta_2}} \mathbf{B}_{\eta_2} \colon \nabla\overline{\bv}_{1, 2} .
\end{align*}
Since $\overline{f}_{\eta_{1,2}}  \in \mathbb{W}^{0, 2}_{1,2} \left(I\times \Omega\right) $,\;  and the pointwise estimate  \[ \left\Vert \big( \delta_{\Div}^{\#} \,  \overline{\rho}^{\#}_{1,2} \big)(t)  \right\Vert_{W^{2,2}(\Omega)} \leq  \Vert \delta_{\Div}^{\#}\Vert_{W^{2,2}(\Omega)}  \Vert \overline{\rho}^{\#}_{1,2} (t)  \Vert_{W^{2,2}(\Omega)}  \]  
 holds for a.e. $t \in I$,  with $\Vert \delta_{\Div}^{\#}\Vert_{W^{2,2}(\Omega)} \in L^{1}(I)$,  \cite[Chapter 3, Theorem 3.14]{BahouriCheminDanchin2011} then yields -- for a constant $C > 0$ -- the estimate 
 
 \begin{equation}\label{eq:LinTransportTypEstim}
  \Vert  \overline{\rho}^{\#}_{1,2}  \Vert_{\mathbb{W}^{0, 2}_{\infty,2} \left(I\times \Omega\right) } \leq \Vert  \overline{f}_{\eta_{1,2}} \Vert_{ \mathbb{W}^{0, 2}_{1,2} \left(I\times \Omega\right) }   \exp\bigg(C \int_I \Big( \big\Vert \nabla \overline{\mathbf{u}}_{eff}^{\#} \big\Vert_{W^{2,2}(\Omega)}  + \Vert  \delta_{\Div}^{\#} \Vert_{W^{2,2}(\Omega)} \Big) \dt \bigg). 
 \end{equation}
 Exploiting the structure of $\overline{f}_{\eta_{1,2}}$  and standard bounds for the geometric coefficients -- cf.   \eqref{eq:GeoEstim1}-\eqref{eq:GeoEstim3} -- the estimate \eqref{eq:LinTransportTypEstim} further reduces to 
 \begin{equation}\label{eq:LinTransportTypEstimPrior}
  \Vert  \overline{\rho}^{\#}_{1,2}  \Vert_{\mathbb{W}^{0, 2}_{\infty,2} \left(I\times \Omega\right) } \leq \overline{c} T^{1/2}e^{\overline{c}T} \Big( \Vert \partial_t \eta_{1,2} \Vert_{\mathbb{W}^{0, 3}_{2,2} \left(I\times \Omega\right)\cap \mathbb{W}^{1, 2}_{2,2} \left(I\times \Omega\right) } +  \Vert \overline{\bv}_{1,2} \Vert_{\mathbb{W}^{1, 2}_{2,2} \left(I\times \Omega\right) \cap \mathbb{W}^{0, 3}_{2,2} \left(I\times \Omega\right) }  \Big), 
 \end{equation}
 with a constant $\overline{c} = \overline{c}\Big( \Omega, \eta_1, \eta_2,   \overline{\bv}_1,  \overline{\bv}_2, \overline{\rho}^{\#}_1, \overline{\rho}^{\#}_2  \Big) $.
 Hence, combining the estimates  \eqref{eq:TimeDerivEstim} and \eqref{eq:LinTransportTypEstimPrior}, we obtain  
\begin{equation}\label{eq:LinTransportTypEstimFinal}
  \Vert  \overline{\rho}^{\#}_{1,2}  \Vert_{\raisebox{-1.5ex}{$\mathcal{Y}$}_{\overline{\rho}}  } \leq cT^{1/2} e^{cT}\Vert \left( \overline{\bv}_{1,2}, \eta_{1,2}   \right)\Vert_{  \raisebox{-1.5ex}{$ \mathcal{Y}$}_{\overline\bv}  \raisebox{-1.5ex}{$\times \mathcal{Y}$}_{\eta}}, 
 \end{equation} 
 where  $c = c\Big( \Omega, \eta_1, \eta_2,   \overline{\bv}_1,  \overline{\bv}_2, \overline{\rho}^{\#}_1, \overline{\rho}^{\#}_2  \Big) > 0 $ and the spaces 
 
 \begin{equation*}
\begin{aligned}
  \mathcal{Y}_{\overline\bv} & :=    \mathbb{W}^{0, 3}_{2,2} (I\times \Omega)  \cap \mathbb{W}^{1, 2}_{2,2} (I\times \Omega), &
\\
  \mathcal{Y}_{\eta} &:=  \mathbb{W}^{1, 3}_{2,2} (I\times \omega)  \cap \mathbb{W}^{2, 2}_{2, 2} (I\times \omega), &
\end{aligned}
\end{equation*}\\
are endowed respectively with the norms
\begin{equation*}
\begin{aligned}
  \Vert \overline\bv\Vert_{ \raisebox{-1.5ex}{$\mathcal{Y}$}_{\overline\bv} } & :=  \sqrt{\lambda + 2\mu} \left( \int_I\int_\Omega |\nabla\Delta\overline\bv|^{2}\dx\dt  \right)^{1/2}   + \sqrt{\lambda + 2\mu} \left( \int_I\int_\Omega |\partial_t\nabla^{2}\overline\bv|^{2} \dx\dt \right)^{1/2} &
\\[0.25cm]
 \Vert \eta\Vert_{ \raisebox{-1.5ex}{$\mathcal{Y}$}_{\eta} } &:=   \left( \int_I\int_\omega |\partial_t \naby\Dely\eta|^{2} \dy\dt \right)^{1/2} + \left( \int_I\int_\omega |\partial_{t}^2 \Dely\eta|^{2} \dy\dt \right)^{1/2}. &
\end{aligned}
\end{equation*}\\

To complete the  contraction estimate for the fixed-point operator, it remains to control the pair $\left( \overline{\bv}_{1,2}, \eta_{1,2}   \right) $  in terms of the difference $\overline{\rho}_{1,2} := \overline{\rho}_1 - \overline{\rho}_2 $ in the  $ \mathcal{Y}_{\overline{\rho}}\, -$norm.  \\
 For this purpose, observe that  $\left( \overline{\bv}_{1,2}, \eta_{1,2}   \right) $  satisfies the momentum-structure system 
\begin{align}
 \mathbf{\mathcal{L}}_{\overline{\bv}} (\overline{\bv}_{1,2}) &= \mathbf{h}_{1,2} - \Div{\mathbf{H}_{1,2}}  - \Div\big( a\mathbf{B}_{\eta_0} \left( \overline{\rho}_{1}^{\,\gamma} - \overline{\rho}_{2}^{\,\gamma}\right) \big), 
 \label{momEqAloneBarDiff}
 \\
 \mathbf{\mathcal{L}}_{\eta} (\eta_{1,2}) &=  \bn^\intercal \left[\mathbf{H}_{1,2} - \mu \mathbf{A}_{\eta_0}\nabla\overline{\bv}_{1,2} - \dfrac{\lambda + \mu}{J_{\eta_0}}\left(\mathbf{B}_{\eta_0}\colon \nabla\overline{\bv}_{1,2} \right)\mathbf{B}_{\eta_0}  + a \mathbf{B}_{\eta_0}\big( \overline{\rho}_{1}^{\,\gamma} - \overline{\rho}_{2}^{\,\gamma}\big)  \right]\circ\bm{\varphi} \bn ,
 \label{shellEqAloneBarDiff}
\end{align} 
 with  $\overline{\bv}_{1,2}  \circ \bm{\varphi}  = (\partial_t\eta_{1,2})\bn$ on $I\times \omega$, and source terms  
 \[ \mathbf{h}_{1,2} := \mathbf{h}_{\eta_1}(\overline{\bv}_1)\!\left[\,\overline{\rho}_1\right] - \mathbf{h}_{\eta_2}(\overline{\bv}_2)\!\left[\,\overline{\rho}_2\right],  \qquad  \mathbf{H}_{1,2} := \mathbf{H}_{\eta_1}(\overline{\bv}_1)\!\left[\,\overline{\rho}_1\right] - \mathbf{H}_{\eta_2}(\overline{\bv}_2)\!\left[\,\overline{\rho}_2 \right].  \]
 Of note, this notation highlights the dependence of $\mathbf{h}_{\eta}(\overline{\bv}) $ and $\mathbf{H}_{\eta}(\overline{\bv})$ on the prescribed density $\overline{\rho}$, which we now indicate explicitly via square brackets. \\
 The linear differential operators $ \mathbf{\mathcal{L}}_{\overline{\bv}} $  and  $ \mathbf{\mathcal{L}}_{\eta} $ are defined respectively on the fixed reference domains $I\times\Omega $ and $I\times \omega$ by
 	\begin{align*}
	\mathbf{\mathcal{L}}_{\overline{\bv}}(\bm{\phi}) &:= J_{\eta_0}\overline{\rho}_0 \,\partial_t\bm{\phi} - \Div\left[\mu \mathbf{A}_{\eta_0}\nabla\bm{\phi} + \dfrac{ \lambda + \mu }{J_{\eta_0}} \left(\mathbf{B}_{\eta_0}\colon \nabla\bm{\phi} \right)\mathbf{B}_{\eta_0} \right]  ,
	\\
	\mathbf{\mathcal{L}}_{\eta}(\zeta) &:=  \partial_t^2\zeta - \partial_t\Dely \zeta + \Dely^2\zeta.
	\end{align*}
 Owing to the continuous dependence of the solution to the linearised momentum-structure subproblem on the input data -- cf.~\eqref{linearestimate} -- a corresponding energy estimate for the pair $\left( \overline{\bv}_{1,2}, \eta_{1,2}   \right) $ holds, up to the appropriate regularity dictated by lower-order norms of the data. More precisely, one obtains the following estimate\footnote{Importantly, the norm  terms for $\mathbf{h}_{1,2}$ and $\mathbf{H}_{1,2}$ stemming from \eqref{linearestimate} can be estimated similarly as in \eqref{eq:hdiffEstimate}--\eqref{eq:HdiffEstimate}. 
 
 In particular,  $\Vert \left( \overline{\bv}_{1,2}, \eta_{1,2}   \right)\Vert_{  \raisebox{-1.5ex}{$ \mathcal{Y}$}_{\overline\bv}  \raisebox{-1.5ex}{$\times \mathcal{Y}$}_{\eta}}$ appears additively in their estimate, which -- for $T$ small enough --  can be absorbed into the left-hand-side of the energy inequality.   }:  
\begin{align}\label{eq:EstimCombVeloDispl}
\Vert \left( \overline{\bv}_{1,2}, \eta_{1,2}   \right)\Vert_{  \raisebox{-1.5ex}{$ \mathcal{Y}$}_{\overline\bv}  \raisebox{-1.5ex}{$\times \mathcal{Y}$}_{\eta}} \leq  CT^{1/2}e^{CT} \Bigg(  \Vert  \overline{\rho}_{1}  -  \overline{\rho}_{2} \Vert_{\mathbb{W}^{0, 2}_{\infty,2} (I\times \Omega) \cap \mathbb{W}^{1, 1}_{\infty,2} (I\times \Omega)}  
+     \Vert  \overline{\rho}_{1}^{\,\gamma}  -  \overline{\rho}_{2}^{\,\gamma} \Vert_{\mathbb{W}^{0, 2}_{\infty,2} (I\times \Omega) \cap \mathbb{W}^{1, 1}_{\infty,2} (I\times \Omega)}    \Bigg),
\end{align}  
with a constant $C = C(\mu, \lambda, R,  \eta_1, \eta_2,   \overline{\bv}_1,  \overline{\bv}_2) > 0$. \\
Furthermore, since the density $\overline{\rho} $ is assumed to be bounded below and above, it follows that  the map 
\[ \overline{\rho} \longmapsto \overline{\rho}^{\,\gamma} \quad  \text{is Lipschitz continuous. } \]
 That is, 
 \[\Vert  \overline{\rho}_{1}^{\,\gamma}  -  \overline{\rho}_{2}^{\,\gamma} \Vert_{\mathbb{W}^{0, 2}_{\infty,2} (I\times \Omega) \cap \mathbb{W}^{1, 1}_{\infty,2} (I\times \Omega)}  \leq C(\gamma, R) \Vert \overline{\rho}_{1}  -  \overline{\rho}_{2} \Vert_{\mathbb{W}^{0, 2}_{\infty,2} (I\times \Omega) \cap \mathbb{W}^{1, 1}_{\infty,2} (I\times \Omega)} .  \]
 Thus, \eqref{eq:EstimCombVeloDispl} further reduces to 
 \begin{align}\label{eq:EstimCombVeloDisplFinal}
\Vert \left( \overline{\bv}_{1,2}, \eta_{1,2}   \right)\Vert_{  \raisebox{-1.5ex}{$ \mathcal{Y}$}_{\overline\bv}  \raisebox{-1.5ex}{$\times \mathcal{Y}$}_{\eta}} \leq  C T^{1/2} e^{CT} \Vert  \overline{\rho}_{1} - \overline{\rho}_{2} \Vert_{\mathbb{W}^{0, 2}_{\infty,2} (I\times \Omega) \cap \mathbb{W}^{1, 1}_{\infty,2} (I\times \Omega)},
\end{align} 
 with $C = C(\gamma, \mu, \lambda, R,  \eta_1, \eta_2,   \overline{\bv}_1,  \overline{\bv}_2) > 0$. \\
 Substituting \eqref{eq:EstimCombVeloDisplFinal} in \eqref{eq:LinTransportTypEstimFinal} yields 
 \begin{equation}\label{eq:FixPointFinalEstim}
  \Vert  \overline{\rho}^{\#}_{1,2}  \Vert_{\raisebox{-1.5ex}{$\mathcal{Y}$}_{\overline{\rho}}  } \leq c T e^{cT}\Vert \overline{\rho}_{1,2}\Vert_{\raisebox{-1.5ex}{$\mathcal{Y}$}_{\overline{\rho}}  } , 
 \end{equation} 
 with $c = c(\gamma, \mu, \lambda, R,  \eta_1, \eta_2,   \overline{\bv}_1,  \overline{\bv}_2)$. \\
 Hence,  for a suitable choice of $T > 0$,  the existence of the desired fixed point follows.  This concludes the proof of Theorem \ref{theo:mainresult}. \\

 \appendix
\section{Proof of Lemma \ref{lem:EquivProblem}}\label{appendix:proof}
Although it is common in the literature to derive the fixed-domain system through a weak formulation, we provide here a direct strong-form derivation.  This offers therefore, a transparent alternative to the usual weak-form approach. 

\noindent Recall that  for all $t \in I, $
\[\bfPsi_\eta(t,\cdot) \colon \Omega \to \Omega_\eta; \quad \overline{\bx} = (\overline{x}_1, \overline{x}_2, \overline{x}_3) \longmapsto \bx = \bfPsi_\eta(t, \overline{\bx}) = (x_1, x_2, x_3) \] 
is a smooth, one-to-one orientation preserving mapping.  

\noindent For clarity purpose, we suppress the $t$--dependence in the velocity field. Let $(e_i)_{i=1}^3$ be the canonical basis of $\mathbb{R}^3$ such that $\bv(\bx) = \sum\limits_{i = 1}^{3} v_i(\bx) e_i , \, \text{and} \; \overline{\bv}(\overline\bx) = \sum\limits_{i = 1}^{3} \overline{v}_i (\overline\bx) e_i $. We further define  
\begin{equation*}
\delta_{i,j} := e_i \cdot e_j = \begin{cases}
1 & \text{if} \quad i = j
\\
0 & \text{if} \quad i \ne j. 
\end{cases}
\end{equation*}
To avoid ambiguity in index computations, we indicate the variable only for coordinate derivatives $\left(\text{e.g.}\ \frac{\partial}{\partial x_i }\ \text{or}\ \frac{\partial}{\partial \overline{x}_i }\right).$

\noindent First, observe that  
\begin{align*}
\divx\big(\bv(\bx) \big) &= \mathlarger{\mathlarger{\sum}}\limits_{i = 1}^{3} \dfrac{\partial v_i(\bx) }{\partial x_i} 
\\
& =  \mathlarger{\mathlarger{\sum}}\limits_{i = 1}^{3}  \mathlarger{\mathlarger{\sum}}\limits_{j = 1}^{3}\, \dfrac{\partial \big( v_{i} \circ\bfPsi_{\eta}\big) (\overline\bx) }{\partial \overline{x}_j} \, \dfrac{\partial \overline{x}_j}{\partial x_i},  
\end{align*}
that is, 
\begin{equation}\label{eq:DivFixed}
\divx\big(\bv(\bx) \big) = \nabx \overline{\bv}(\overline{\bx})  \colon \Big(\nabx \bfPsi_{\eta}^{-1}\circ\bfPsi_{\eta}(\overline{\bx}) \Big)^{\intercal}.
\end{equation}
Using \eqref{eq:DivFixed}, we derive that 
\begin{align*}
\nabla\divx\big(\bv(\bx) \big) & =  \mathlarger{\mathlarger{\sum}}\limits_{i = 1}^{3}\, \dfrac{\partial}{\partial x_i} \Big[  \nabx \overline{\bv}(\overline{\bx})  \colon \big(\nabx \bfPsi_{\eta}^{-1}\circ\bfPsi_{\eta}(\overline{\bx}) \big)^{\intercal} \Big] e_i
\\
& = \mathlarger{\mathlarger{\sum}}\limits_{i = 1}^{3}\, \mathlarger{\mathlarger{\sum}}\limits_{j = 1}^{3} \dfrac{\partial}{\partial \overline{x}_j} \Big[  \nabx \overline{\bv}(\overline{\bx})  \colon \big(\nabx \bfPsi_{\eta}^{-1}\circ\bfPsi_{\eta}(\overline{\bx}) \big)^{\intercal}  \Big] \,  \dfrac{\partial \overline{x}_j}{\partial x_i} e_i
\\
& = \dfrac{1}{J_\eta} \, \mathlarger{\mathlarger{\sum}}\limits_{i = 1}^{3} \mathlarger{\mathlarger{\sum}}\limits_{j = 1}^{3} \, \dfrac{\partial}{\partial \overline{x}_j} \Big[  \nabx \overline{\bv}(\overline{\bx})  \colon \big(\nabx \bfPsi_{\eta}^{-1}\circ\bfPsi_{\eta}(\overline{\bx}) \big)^{\intercal} \Big] \, J_{\eta}  \dfrac{\partial \overline{x}_j}{\partial x_i} \, \big(e_i \otimes e_j \big)e_j
\\
& =  \dfrac{1}{J_\eta} \,  \left( \mathlarger{\mathlarger{\sum}}\limits_{i = 1}^{3} \mathlarger{\mathlarger{\sum}}\limits_{j = 1}^{3}\, J_{\eta}  \dfrac{\partial \overline{x}_j}{\partial x_i} \big(e_i \otimes e_j \big)  \right)   \left( \mathlarger{\mathlarger{\sum}}\limits_{j = 1}^{3}\,  \dfrac{\partial}{\partial \overline{x}_j} \Big[  \nabx \overline{\bv}(\overline{\bx})  \colon \big(\nabx \bfPsi_{\eta}^{-1}\circ\bfPsi_{\eta}(\overline{\bx}) \big)^{\intercal}\Big]  e_j   \right) 
\\
& =  \dfrac{1}{J_\eta} \, \mathbf{B}_{\eta}(\overline{\bx})  \nabla\left( \dfrac{1}{J_\eta} \big( \nabla\overline{\bv}(\overline{\bx})  \colon \mathbf{B}_{\eta}(\overline{\bx})  \big)  \right), 
\end{align*}
However, as $ \mathbf{B}_{\eta}$ is divergence free -- that is, Piola identity   \cite[Chapter 5, Lemma 5.8]{rindler2018calculus}, it follows that 
\begin{equation}\label{eq:GradientDivFixed}
\nabla\divx\bv = \dfrac{1}{J_\eta} \, \divx\left( \dfrac{1}{J_\eta} \big( \nabla\overline{\bv} \colon \mathbf{B}_{\eta} \big) \mathbf{B}_{\eta}  \right).
\end{equation}
In order to express $\Delta \bv $ in the reference configuration, we first compute the pull-back of $\nabla\bv$. Thus, 
\begin{align*}
\nabla\bv(\bx) & =  \mathlarger{\mathlarger{\sum}}\limits_{i = 1}^{3} \mathlarger{\mathlarger{\sum}}\limits_{j = 1}^{3} \, \dfrac{\partial v_j (\bx)}{\partial x_i} \big(e_i \otimes e_j \big)
\\
& = \mathlarger{\mathlarger{\sum}}\limits_{i = 1}^{3} \mathlarger{\mathlarger{\sum}}\limits_{j = 1}^{3} \mathlarger{\mathlarger{\sum}}\limits_{k = 1}^{3} \, \dfrac{\partial \big( v_{j} \circ\bfPsi_{\eta}\big) (\overline\bx) }{\partial \overline{x}_k}  \, \dfrac{\partial \overline{x}_k}{\partial x_i} \big(e_i \otimes e_j \big)
\\
& = \mathlarger{\mathlarger{\sum}}\limits_{i = 1}^{3} \mathlarger{\mathlarger{\sum}}\limits_{j = 1}^{3} \mathlarger{\mathlarger{\sum}}\limits_{k = 1}^{3}  \mathlarger{\mathlarger{\sum}}\limits_{l = 1}^{3} \, \dfrac{\partial \big( v_{j} \circ\bfPsi_{\eta}\big) (\overline\bx) }{\partial \overline{x}_k}  \, \dfrac{\partial \overline{x}_l}{\partial x_i} \, \delta_{k,l} \big(e_i \otimes e_j \big)
\\
& = \left( \mathlarger{\mathlarger{\sum}}\limits_{i = 1}^{3}  \mathlarger{\mathlarger{\sum}}\limits_{l = 1}^{3} \, \dfrac{\partial \overline{x}_l}{\partial x_i} \big(e_i \otimes e_l \big)   \right)  \left( \mathlarger{\mathlarger{\sum}}\limits_{j = 1}^{3} \mathlarger{\mathlarger{\sum}}\limits_{k = 1}^{3}  \, \dfrac{\partial \big( v_{j} \circ\bfPsi_{\eta}\big) (\overline\bx) }{\partial \overline{x}_k}  \big(e_k \otimes e_j \big)   \right),
\end{align*}
that is, 
\begin{equation}\label{eq:GradFixed}
 \nabla\bv(\bx) =   \Big(\nabx \bfPsi_{\eta}^{-1}\circ\bfPsi_{\eta}(\overline{\bx}) \Big)^{\intercal}\nabx \overline{\bv}(\overline{\bx}).
\end{equation}
From \eqref{eq:GradFixed}, it follows that 
\begin{align*}
\Delx\bv(\bx) & = \mathlarger{\mathlarger{\sum}}\limits_{i = 1}^{3} \, \mathlarger{\mathlarger{\sum}}\limits_{j = 1}^{3}  \dfrac{\partial \big[\nabx\bv(\bx)\big]_{j, i} }{\partial x_j} \, e_i
\\
& = \mathlarger{\mathlarger{\sum}}\limits_{i = 1}^{3} \, \mathlarger{\mathlarger{\sum}}\limits_{j = 1}^{3} \, \dfrac{\partial}{\partial x_j}  \left[ \Big(\nabx \bfPsi_{\eta}^{-1}\circ\bfPsi_{\eta}(\overline{\bx}) \Big)^{\intercal}\nabx \overline{\bv}(\overline{\bx}) \right]_{j,i} \, e_i 
\\
& = \mathlarger{\mathlarger{\sum}}\limits_{i = 1}^{3} \, \mathlarger{\mathlarger{\sum}}\limits_{j = 1}^{3} \, \mathlarger{\mathlarger{\sum}}\limits_{k = 1}^{3} \, \dfrac{\partial}{\partial \overline{x}_k} \left[ \Big(\nabx \bfPsi_{\eta}^{-1}\circ\bfPsi_{\eta}(\overline{\bx}) \Big)^{\intercal}\nabx \overline{\bv}(\overline{\bx}) \right]_{j,i} \, \dfrac{\partial \overline{x}_k}{\partial x_j} \, e_i
\\
& = \mathlarger{\mathlarger{\sum}}\limits_{i = 1}^{3} \, \mathlarger{\mathlarger{\sum}}\limits_{j = 1}^{3} \, \mathlarger{\mathlarger{\sum}}\limits_{k = 1}^{3} \,  \mathlarger{\mathlarger{\sum}}\limits_{l = 1}^{3} \,  \dfrac{\partial}{\partial \overline{x}_k} \left( \dfrac{\partial  \big( v_{i} \circ\bfPsi_{\eta}\big) (\overline\bx) }{\partial \overline{x}_l} \, \dfrac{\partial \overline{x}_l}{\partial x_j} \right) \, \dfrac{\partial \overline{x}_k}{\partial x_j} \, e_i
\\
& =  \mathlarger{\mathlarger{\sum}}\limits_{i = 1}^{3} \, \mathlarger{\mathlarger{\sum}}\limits_{j = 1}^{3} \, \mathlarger{\mathlarger{\sum}}\limits_{k = 1}^{3} \,  \mathlarger{\mathlarger{\sum}}\limits_{l = 1}^{3} \, \left[ \dfrac{\partial^{2}  \big( v_{i} \circ\bfPsi_{\eta}\big) (\overline\bx) }{\partial \overline{x}_k \partial \overline{x}_l} \, \dfrac{\partial \overline{x}_l}{\partial x_j} \,  \dfrac{\partial \overline{x}_k}{\partial x_j} \, e_i   +   \dfrac{\partial  \big( v_{i} \circ\bfPsi_{\eta}\big) (\overline\bx) }{\partial \overline{x}_l} \, \dfrac{\partial }{\partial  \overline{x}_k } \left( \dfrac{\partial \overline{x}_l}{\partial x_j} \right)  \, \dfrac{\partial \overline{x}_k}{\partial x_j} \, e_i   \right]
\\
& = \dfrac{1}{J_{\eta}} \,  \mathlarger{\mathlarger{\sum}}\limits_{i = 1}^{3} \, \mathlarger{\mathlarger{\sum}}\limits_{k = 1}^{3} \,  \mathlarger{\mathlarger{\sum}}\limits_{l = 1}^{3} \,   \dfrac{\partial^{2}  \big( v_{i} \circ\bfPsi_{\eta}\big) (\overline\bx) }{\partial \overline{x}_k \partial \overline{x}_l} \, \left(  \mathlarger{\mathlarger{\sum}}\limits_{j = 1}^{3} \, J_{\eta} \dfrac{\partial \overline{x}_l}{\partial x_j} \,  \dfrac{\partial \overline{x}_k}{\partial x_j} \right) e_i 
\\
& \quad + \dfrac{1}{J_\eta} \,  \mathlarger{\mathlarger{\sum}}\limits_{i = 1}^{3} \, \mathlarger{\mathlarger{\sum}}\limits_{l = 1}^{3} \,  \dfrac{\partial  \big( v_{i} \circ\bfPsi_{\eta}\big) (\overline\bx) }{\partial \overline{x}_l} \left( \mathlarger{\mathlarger{\sum}}\limits_{k = 1}^{3} \, \mathlarger{\mathlarger{\sum}}\limits_{j = 1}^{3}   \dfrac{\partial }{\partial  \overline{x}_k } \left( \dfrac{\partial \overline{x}_l}{\partial x_j} \right)  \, J_\eta \,  \dfrac{\partial \overline{x}_k}{\partial x_j} \right) e_i 
\\
& =  \dfrac{1}{J_{\eta}} \,  \mathlarger{\mathlarger{\sum}}\limits_{i = 1}^{3} \left( \mathlarger{\mathlarger{\sum}}\limits_{k = 1}^{3} \,  \mathlarger{\mathlarger{\sum}}\limits_{l = 1}^{3} \,   \dfrac{\partial^{2}  \big( v_{i} \circ\bfPsi_{\eta}\big) (\overline\bx) }{\partial \overline{x}_k \partial \overline{x}_l} \, \left[ \mathbf{A}_{\eta} (\overline{\bx}) \right]_{k,l} \right)\, e_i 
\\
& \quad +  \dfrac{1}{J_\eta} \,  \mathlarger{\mathlarger{\sum}}\limits_{i = 1}^{3} \, \mathlarger{\mathlarger{\sum}}\limits_{l = 1}^{3} \,  \dfrac{\partial  \big( v_{i} \circ\bfPsi_{\eta}\big) (\overline\bx) }{\partial \overline{x}_l} \left( \mathlarger{\mathlarger{\sum}}\limits_{k = 1}^{3} \, \mathlarger{\mathlarger{\sum}}\limits_{j = 1}^{3}   \dfrac{\partial }{\partial  \overline{x}_k } \left( \dfrac{\partial \overline{x}_l}{\partial x_j} \right)  \, J_\eta \,  \dfrac{\partial \overline{x}_k}{\partial x_j} \right) e_i 
\\
& = \dfrac{1}{J_{\eta}} \,  \mathlarger{\mathlarger{\sum}}\limits_{i = 1}^{3} \Big( \mathbf{A}_\eta (\overline{\bx}) \colon \nabla^{2}  \big( v_{i} \circ\bfPsi_{\eta}\big)(\overline{\bx})  \Big) e_i  +  \dfrac{1}{J_{\eta}} \,  \mathlarger{\mathlarger{\sum}}\limits_{i = 1}^{3}  \Big( \divx \big(\mathbf{A}_{\eta} (\overline{\bx})\big) \cdot \nabx \big( v_{i} \circ\bfPsi_{\eta}\big)(\overline{\bx})   \Big) e_i
\\
& =  \dfrac{1}{J_{\eta}} \,  \mathlarger{\mathlarger{\sum}}\limits_{i = 1}^{3}  \, \divx  \Big(  \mathbf{A}_{\eta} (\overline{\bx}) \nabx \big( v_{i} \circ\bfPsi_{\eta}\big)(\overline{\bx}) \Big) e_i .
\end{align*}
Consequently,
\begin{equation}\label{eq:LaplaceFixed}
\Delx\bv = \dfrac{1}{J_{\eta}} \,  \divx  \Big(  \mathbf{A}_{\eta} \nabx \overline{\bv} \Big). 
\end{equation}
Moreover, we have 
\begin{align*}
\partial_t \left(\varrho\bv \right) & = (\partial_t \varrho)\bv + \varrho\partial_t \bv
\\
& = - \divx (\varrho\bv) \bv  + \varrho \left( \partial_t \overline{\bv} + \nabx\overline{\bv} \cdot \partial_t \bfPsi_{\eta}^{-1}\circ \bfPsi_\eta  \right)
\\
& = - \big( \divx(\varrho \bv\otimes\bv) - \varrho\,\bv\,\divx\bv \big) + \overline{\varrho }\left( \partial_t \overline{\bv} + \nabx\overline{\bv} \cdot \partial_t \bfPsi_{\eta}^{-1}\circ \bfPsi_\eta  \right)
\\
& = -  \divx(\varrho \bv\otimes\bv) + \overline{\varrho } \,\overline\bv \Big( \nabx\overline{\bv} \colon \left( \nabx\bfPsi_{\eta}^{-1}\circ\bfPsi_\eta\right)^{\intercal} \Big) + \overline{\varrho }\left( \partial_t \overline{\bv} + \nabx\overline{\bv} \cdot \partial_t \bfPsi_{\eta}^{-1}\circ \bfPsi_\eta  \right), 
\end{align*}
that is, 
\begin{equation}\label{eq:rhsMomenFixed}
\partial_t \left(\varrho\bv \right)  + \divx(\varrho \bv\otimes\bv) =  \overline{\varrho } \,\overline\bv \Big( \nabx\overline{\bv} \colon \left( \nabx\bfPsi_{\eta}^{-1}\circ\bfPsi_\eta\right)^{\intercal} \Big) + \overline{\varrho }\left( \partial_t \overline{\bv} + \nabx\overline{\bv} \cdot \partial_t \bfPsi_{\eta}^{-1}\circ \bfPsi_\eta  \right).
\end{equation}
Likewise, applying the chain rule yields  
\begin{equation}\label{eq:pressureFixed}
\nabx p(\varrho) = a \left( \nabx\bfPsi_{\eta}^{-1}\circ\bfPsi_\eta\right)^{\intercal} \nabx\overline{\varrho}^{\,\gamma}.
\end{equation} 
Combining \eqref{eq:GradientDivFixed}, \eqref{eq:LaplaceFixed},  \eqref{eq:rhsMomenFixed} and \eqref{eq:pressureFixed} yields the desired result.

 \section*{Acknowledgements}
 The author thanks Dominic Breit, Romeo Mensah, and Pei Su for helpful discussions and constructive comments.
This work was funded by the Deutsche Forschungsgemeinschaft (DFG) -- Projektnummer 543675748.

\section*{Compliance with Ethical Standards}
\smallskip
\par\noindent
{\bf Conflict of Interest}. The authors declare that they have no conflict of interest.

\smallskip
\par\noindent
{\bf Data Availability}. Data sharing is not applicable to this article as no datasets were generated or analysed during the current study.

\end{document}